\documentclass[11pt,a4paper]{amsart}
\usepackage{amsmath,amsfonts,xargs,microtype,amssymb,bbm}

\usepackage[utf8]{inputenc}
\usepackage{graphicx} % Required for inserting images
\usepackage{amsthm}
\usepackage{tikz}
\usepackage{enumitem}
\usetikzlibrary{calc}
\usepackage{tkz-tab}
\usepackage{theoremref}
\usepackage{thmtools, thm-restate}
\declaretheorem{theorem}
\usepackage{thm-restate}
\usepackage{hyperref}
\usepackage{comment}
\usepackage{color}
\usepackage{mathtools}
\usepackage{subcaption}
\mathtoolsset{showonlyrefs}

\hypersetup{colorlinks=true,urlcolor=black, pdftitle="Lp-zonoids"}
\begin{document}
\newcommand{\MainTheoremExtra}{.}
\newcommand{\MainTheoremExtraTwo}{ }
\newcommand{\MainTheoremExtraThree}{Moreover, 
        in
        % f we define 
        % \begin{align*}
        %     v_+:=\sum_{v_i>0}v_ie_i\qquad \textup{and}\qquad v_-:=\sum_{v_i<0}v_ie_i,
        % \end{align*}
        % then equality holds on the right if and only if either $v\in \mathbb{R}^m_+\cup (-\mathbb{R}^m_+)$, or $v\notin \mathbb{R}^m_+\cup (-\mathbb{R}^m_+)$ and the faces $F(K;v_+)$ and $F(K;v_-)$ intersect. 
        the case where $K$ is the ball $B_q^m$ with $1\leq q<\infty$,  equality holds on the right if and only if $v\in \mathbb{R}^m_+\cup (-\mathbb{R}^m_+)$.  The left inequality is sharp.}
\newcommand{\R}{ {\mathbb R}}
\newcommand{\N}{ {\mathbb N}}
\newcommand{\s}{ {\mathbb S}}
\newcommand{\conv}{{\operatorname{conv}}}
\newcommand{\supp}{{\operatorname{supp}}}
\newcommand\numberthis{\addtocounter{equation}{1}\tag{\theequation}}

\theoremstyle{plain}

\newtheorem{lemma}[theorem]{Lemma}
\newtheorem{corollary}[theorem]{Corollary}
\newtheorem{prop}[theorem]{Proposition}

\newtheorem{conj}{Conjecture}

\theoremstyle{remark}
\newtheorem{remark}{Remark}

\makeatother

\title[$L_p$-Rogers--Shephard inequalities]{$L_p$-Rogers--Shephard type inequalities for $L_p$-zonoids and symmetric bodies}

\author[M. Fradelizi, A. Manui, M. Meyer, C. S. Ndiaye]{Matthieu Fradelizi, Auttawich Manui, Mark Meyer, and Cheikh Saliou Ndiaye}

\subjclass[2020]{Primary: 52A20, 52A21; Secondary: } 
\keywords{Rogers-Shephard inequality, asymmetric $L_p$-zonoids, projections}

\date{}

\begin{abstract} 
    We study generalizations of the classical Rogers--Shephard inequalities in the framework of Firey $L_p$-summation. We first consider the class of asymmetric $L_p$-zonoids. In this setting, we show that proving a sharp $L_p$-Rogers--Shephard inequality for asymmetric $L_p$-zonoids in $\R^n$ is equivalent to proving a sharp inequality between the volumes of projections of $B_q^m\cap \R^m_+$ and $B_q^m$ onto an $n$-dimensional subspace $E$, where $q$ is the H\"older conjugate of $p$. We conjecture that the inequality is sharp when the subspace $E$ is a coordinate subspace. We fully establish this inequality along with equality conditions in the case $p =2$. For general $p$, we prove it in the case $n=m-1$, $n=1$, and discuss several particular cases, including an averaged version and a local version of the inequality. We then turn to the setting of convex bodies having a center of symmetry. Rogers and Shephard also proved a sharp version of their inequality for bodies in this class. We conjecture a similar bound for the $L_p$-summation, and we establish our conjecture for the particular case of asymmetric $L_1$-zonoids, which, in particular, proves our conjecture in the planar case.
\end{abstract}

\maketitle
% \setcounter{tocdepth}{2}
% \tableofcontents

\section{Introduction}

\subsection{\texorpdfstring{$L_p$}{Lp}-Rogers--Shephard inequalities}
    We study variations of the classical inequalities of Rogers and Shephard that establish sharp bounds comparing a convex body with its reflection with respect to the origin. Formally, we study convex bodies, which are the sets $K\subset\mathbb{R}^n$ that are convex, compact, and have nonempty interior. An important notion in our study is the Minkowski sum of convex bodies $K$ and $L$, which is given by
    \begin{equation*}
        K+L:=\{k+l:k\in K,l\in L\}.
    \end{equation*}
    If we define $-K:=\{-k:k\in K\}$ to be the reflection of $K$ with respect to the origin, then we discuss the difference body $K-K:=K+(-K)$. We also study the convex hull of $K\cup-K$, denoted by $\textup{conv}(K,-K)$, which is defined to be the smallest convex set containing both $K$ and $-K$. As we will see shortly, the sets $K-K$ and $\textup{conv}(K,-K)$ are related to each other through the concept of Firey summation.

    The first inequality of interest, established by Rogers and Shephard in \cite{RS-57}, is the bound
    \begin{equation}
        \label{eq:Classical-Rogers-shephard-ineq}
        |K -K| \leq \binom{2n}{n}|K|,
    \end{equation}
    where $|\cdot |$ denotes the Lebesgue measure. The bound \eqref{eq:Classical-Rogers-shephard-ineq} is sharp, and equality holds precisely when $K$ is a simplex. Another result of interest comes from the paper \cite{RS-58}, where Rogers and Shephard proved the following sharp bound for $K$ being a convex body containing the origin
    \begin{equation}
        \label{eq:Rogers-Shephard-ineq-infty}
        \left|\conv(K,-K) \right| \leq 2^n |K|,
    \end{equation}
    with equality if and only if $K$ is a simplex with one vertex at the origin.
    
    In the paper \cite{RS-58-2}, Rogers and Shephard also proved that if $K$ has a center of symmetry (i.e., there exists $a\in K$ such that $K-a=a-K$) and contains the origin, then the above inequality can be improved as follows 
     \begin{equation}
        \label{eq:Rogers-Shephard-ineq-infty-sym}
        \left|\conv(K,-K) \right| \leq (n+1) |K|,
    \end{equation}
    with equality if and only if there exist $a,b$ on the boundary of $K$ such that all two dimensional sections of $K$ by a plane containing $a$ and $b$ are triangles with $a$ and $b$ as vertices or quadrilaterals with $a$ and $b$ as opposite vertices.
    %A classical inequality in convex geometry due to the Rogers and Shephards \cite{RS-57} states that for any convex body $K \subset \R^n$, that is, a convex and compact set with nonempty interior,
    %\begin{equation}
        %\label{eq:Classical-Rogers-shephard-ineq}
        %|K + (-K)| \leq \binom{2n}{n}|K|,
    %\end{equation}
    %where $K + L =\{ x + y : x \in K,y \in L \} $ denotes the Minkowski sum, $-K$ the reflection of $K$ with respect to the origin, and $| K |$  denote the Lebesgue measure. Inequality \eqref{eq:Classical-Rogers-shephard-ineq} is sharp, and equality holds if and only if $K$ is a simplex. 

    %Another remarkable inequality due to Rogers and Shephards \cite{RS-58} asserts that for any convex body $K$ containing the origin
    %\begin{equation}
     %   \label{eq:Rogers-Shephard-ineq-infty}
      %  \left|\conv(K,-K) \right| \leq 2^n |K|,
    %\end{equation}
    %where $\conv \{ K,-K \} $ denotes the convex hull of $K$ and its reflection. Equality holds if and only if $K$ is a simplex with one of the vertex is at the origin.
 
    Firey extended the notion of Minkowski summation in the paper \cite{F-62}, defining the $L_p$-sum (or Firey sum) of convex bodies containing the origin. The $L_p$-sum is defined through the concept of the support function. Recall that the support function of a convex set  $K\subset\mathbb{R}^n$ is the function $h_K:\mathbb{R}^n\rightarrow\mathbb{R}$ defined by
    \begin{equation*}
        h_K(x):=\sup_{k\in K}\langle x,k\rangle.
    \end{equation*}
    For fixed $1\leq p<\infty$, there exists a convex set $K\oplus_pL$ whose support function satisfies
    \begin{equation}\label{def:Firey-sums}
        h_{K\oplus_p L}(x)=(h_K^p(x)+h_L^p(x))^{\frac{1}{p}},
    \end{equation}
    which is well-defined as long as both $K$ and $L$ contain the origin. In the case $p=\infty$, we take $h_{K\oplus_{\infty} L}:=\max\{h_K,h_L\}$. The $L_p$-sum of the sets $K$ and $L$ is defined to be precisely the set $K\oplus _p L$. We pay special attention to the case $p=1$, where $K\oplus_1L=K+L$ is the Minkowski summation, and the case $p=\infty$, where $K\oplus_{\infty}L=\textup{conv}(K,L)$ is the convex hull of $K\cup L$. 
    % Lutwak further developed the theory of Minkowski summation, introducing the notion of $L_p$-mixed volumes and deriving a family of inequalities in the expositions \cite{L-93,L-96}. 
    
    %In \cite{F-62}, Firey introduced an extension of the Minkowski sum. Recall that the support function of $K$ is given by $h_K(u)=\sup \{\langle x, u\rangle \mid x \in K\}$. For $p \in [1, \infty]$, the $L_p$-sum (or Firey sum) $K \oplus_p L $ of convex bodies $K,L$ containing the origin is defined via support functions:
    %\begin{equation}
     %   \label{def:Firey-sums}
      %  h_{K\oplus_p L}^p := h_K^p +h_L^p,
    %\end{equation}
    %with the case $ p = \infty$ understood in the limiting sense. Note that the case $ p =1 $ and $p=\infty$ correspond, respectively, to the Minkowski sum and to the convex hull of the union. The theory of $L_p$-sum was further developed by Lutwak, who created the notion of $L_p$-mixed volumes and established a rich family of associated inequalities; see, for instance, \cite{L-93,L-96}.

    % In the paper \cite{BC-08}, Bianchini and Colesanti made initial efforts to generalize Rogers--Shephard inequalities to the setting of $L_p$-summation. To state the problem, fix $1\leq p\leq \infty$, and fix an integer $n\geq 1$. 
    The following general inequality linking inequalities \eqref{eq:Classical-Rogers-shephard-ineq} and \eqref{eq:Rogers-Shephard-ineq-infty}
    has been conjectured: for any convex body $K\subset\mathbb{R}^n$ containing the origin
\begin{equation}\label{eq:Rogers-Shephard-ineq-p}
        |K\oplus_p-K|\leq \kappa_{n,p}|K|\quad \hbox{where}\quad \kappa_{n,p}=\sum_{i=0}^n\binom{n/q}{ i/q}^{-1}\binom{n}{i}^2,
    \end{equation}
     with equality for a simplex having the origin as a vertex, where $q$ is the H\"older conjugate of $p$ ($\frac{1}{p}+\frac{1}{q}=1$) and the binomial coefficient is defined using the gamma function,
     $$\binom{x}{y}:=\frac{\Gamma(x+1)}{\Gamma(y+1) \Gamma(x-y+1)}.$$ The inequality was established for $n=2$ by Bianchini and Colesanti \cite[Theorem 1]{BC-08} and in $\R^n$ for locally anti-blocking bodies in \cite{MNZ-25}, while the equality cases were proved in \cite{FMMN-26-1}. In this paper, we shall consider improvements of this inequality for restricted classes of convex bodies, i.e., asymmetric $L_p$-zonoids on one side and convex bodies having a center of symmetry on the other side.  

\subsection{Asymmetric \texorpdfstring{$L_p$}{Lp}-zonoids}
We first consider improvements of inequality \eqref{eq:Rogers-Shephard-ineq-p} within the class of asymmetric $L_p$-zonoids for $1\leq p \leq +\infty$. An asymmetric $L_p$-zonotope, introduced by Weberndorfer \cite{W-13}, is defined as the $L_p$-sum of finitely many segments containing the origin. For example, if $(e_1,\dots,e_n)$ is the canonical basis of $\R^n$, then 
\[
[0,e_1]\oplus_p\cdots\oplus_p [0,e_n]=B_q^n\cap\R_+^n,
\]
where $q$ is the H\"older conjugate of $p$ and, for $1\le q<+\infty$, $B_q^n=\{x=(x_1,\dots,x_n)\in\R^n:|x_1|^q+\cdots+|x_n|^q\le1\}$ and $B_\infty^n=[-1,1]^n$ denote the $\ell_q$-ball in $\R^n$.
An asymmetric $L_p$-zonoid is the Hausdorff limit of asymmetric $L_p$-zonotopes. We study the following conjecture.

% Since for any $a,b>0$, $p\ge1$ and $u\in \mathbb{S}^{n-1}$ one has $[-au,bu]=[0,-au]\oplus_p[0,bu]$, an asymmetric $L_p$-zonotope $Z$ can be written in the form
% \[
% Z=[0,v_1]\oplus_p\cdots\oplus_p[0,v_m],
% \]
% where $v_1,\dots,v_m\in\R^n$. In this case, we get 
% \[
% Z\oplus_p -Z=[-v_1,v_1]\oplus_p\cdots\oplus_p[-v_m,v_m].
% \]
% In terms of the support function, introducing the vectors $u_i=v_i/|v_i|_2$, this means that there exists $c_i>0$ such that for any $x\in\R$,
% \[
% h_{Z}(x)=\left(\sum_{i=1}^mc_i\langle x,u_i\rangle_+^p\right)^\frac{1}{p}\quad\hbox{and}\quad h_{Z\oplus_p -Z}(x)=\left(\sum_{i=1}^mc_i|\langle x,u_i\rangle|^p\right)^\frac{1}{p}, 
% \]
% where for any $a\in\R$, one denotes $a_+=\max(a,0)$. 
% An asymmetric $L_p$-zonoid $Z$ is the Hausdorff limit of asymmetric $L_p$-zonotopes. In terms of support functions, $Z$ is an asymmetric $L_p$-zonoid if and only if there exists a positive measure $\mu$ on $S^{n-1}$ such that 
% \[
% h_{Z}(x)=\left(\int_{S^{n-1}}\langle x,u\rangle_+^pd\mu(u)\right)^\frac{1}{p}.
% \]
% Notice also that the support function of $Z \oplus_p -Z$ is simply
% \[
% h_{Z\oplus -Z}(x)=\left(\int_{S^{n-1}}|\langle x,u\rangle|^pd\mu(u)\right)^\frac{1}{p}.
% \]

    \begin{conj} \label{conj:Rogers-Shephard-ineq-p-zonoids}
        For $p\ge 1$, let $Z \subset \R^n$ be an asymmetric $L_p$-zonoid. Then
        \begin{equation}\label{eq:Rogers-Shephard-ineq-p-zonoids-conj}
            |Z \oplus_p -Z| \leq 2^n\,|Z|.
        \end{equation}
        Moreover, if $p>1$, equality holds if and only if $Z$ is contained in a hyperplane or is the $L_p$-sum of $n$ segments with the origin as an endpoint, or equivalently, $Z$ is the image of $B_q^n\cap\R_+^n$ under a nonsingular linear transform, where $q$ satisfies $1/p+1/q=1$.
    \end{conj}
    We note that Conjecture~\ref{conj:Rogers-Shephard-ineq-p-zonoids} is true when $p=1$, and in fact is an equality due to the translation invariance of volume under Minkowski summation. On the other hand, when $p = \infty$, one can approximate an $L_\infty$-zonoid by $L_\infty$-zonotopes, and the $L_\infty$-sum corresponds to taking convex hulls, hence an $ L_\infty$-zonotope is simply a polytope containing the origin, and an $L_\infty$-zonoid is simply any convex body containing the origin. Therefore, Conjecture~\ref{conj:Rogers-Shephard-ineq-p-zonoids} reduces to \eqref{eq:Rogers-Shephard-ineq-infty}. Moreover, Conjecture~\ref{conj:Rogers-Shephard-ineq-p-zonoids} can equivalently be expressed as a sharp quantitative comparison between the volumes of projections of $B_q^m \cap \mathbb{R}^m_+$ and those of $B_q^m$ onto the same subspace, as in the conjecture below.  We let $P_EK$ denote the orthogonal projection of a set $K$ onto the subspace $E$.
    We refer to the survey \cite{NT-23} for closely related topics.
    \begin{conj} \label{conj:projection-cap-conj}
        Fix $q \ge 1$ and  $m\ge n \geq 1$. Then, for any $n$-dimensional subspace $E$ of $\mathbb{R}^m$, one has
        \begin{equation} \label{eq:projection-cap-conj}
            | P_E B_q^m|_n\leq 2^n|P_E(B_q^m \cap \mathbb{R}^m_+)|_n .
        \end{equation}
    Moreover, equality holds if and only if there exists a partition  $U_0, U_1, \dots, U_n $ of $[m]$ such that $U_j \neq \emptyset$ for every $ 1 \leq j \leq n$, $U_0 = \{ i\in [m] :P_E e_i = 0 \}$, and 
    % there is a partition of $\{1,\dots,m\}$ into $n$ non empty subsets $U_1,\dots,U_n$, and 
    for every $1\le j\le n$, there exist unit vectors $w_j\in\sum_{i\in U_j}\R_{>0} e_i$ such that $E=\textup{span}(w_1,\dots,w_n)$.
    \end{conj}

    We first show in Theorem~\ref{thm: equivalent-RS-Proj} that Conjecture ~\ref{conj:Rogers-Shephard-ineq-p-zonoids}, when restricted to the setting of asymmetric $L_p$-zonotopes, is equivalent to Conjecture~\ref{conj:projection-cap-conj}. Note that this is sufficient if we wish to study the inequality in Conjecture ~\ref{conj:Rogers-Shephard-ineq-p-zonoids}, since any asymmetric $L_p$-zonoid can be approximated by asymmetric $L_p$-zonotopes. Our argument is based on the technique of Madiman, Mathieu Meyer, Zvavitch, and the first-named author in \cite{FMMZ-24}. We then give an affirmative answer to Conjecture~\ref{conj:projection-cap-conj} in the case $q=2$ in the following Theorem~\ref{thm:Rogers-Shephard-ineq-2-zonoids}, using a method introduced by Barthe \cite{B-01} based on the reverse Brascamp--Lieb inequality \cite{B-98, B-04, BKX-23}. 
    \begin{restatable}{theorem}{ProjectionCaptwo}
        \label{thm:projection-cap-conj-2}
    Conjecture~\ref{conj:projection-cap-conj} holds for $ q= 2$% 
    \MainTheoremExtra
    \end{restatable}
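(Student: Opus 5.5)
The plan is to prove the inequality $|P_E B_2^m|_n\le 2^n|P_E(B_2^m\cap\R^m_+)|_n$ with Barthe's reverse Brascamp--Lieb inequality applied to half-Gaussians. Since $P_EB_2^m=B_2^n$ (identifying $E$ with $\R^n$), the left side equals $\omega_n=|B_2^n|$, and the inequality is equivalent to
\[
|K|\ \ge\ \frac{\omega_n}{2^n},\qquad K:=P_E(B_2^m\cap\R^m_+)=\Big\{\sum_i s_iu_i:\ s\in\R^m_+,\ |s|_2\le 1\Big\},\quad u_i:=P_Ee_i .
\]

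\textbf{Setup.} Since $P_E$ restricted to $E$ is the identity, we have $\sum_i u_i\otimes u_i=\mathrm{Id}_E$. Discard the indices with $u_i=0$; this is the set $U_0$. For the remaining indices write $u_i=\sqrt{c_i}\,v_i$ with $|v_i|=1$ and $c_i=|u_i|^2>0$. Then $\sum_i c_i v_i\otimes v_i=\mathrm{Id}_E$ and $\sum c_i=n$, which is exactly a geometric Brascamp--Lieb datum.

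\textbf{Applying reverse Brascamp--Lieb.} Take $f_i(t)=e^{-t^2/2}\mathbbm 1_{t\ge 0}$ and
\[
F(x)=\sup\Big\{\prod_i f_i(t_i)^{c_i}:\ x=\sum_i c_it_iv_i\Big\}.
\]
Barthe's inequality gives
\[
\int_E F\ \ge\ \prod_i\Big(\int f_i\Big)^{c_i}=\Big(\tfrac{\pi}{2}\Big)^{n/2}.
\]
Substituting $s_i=\sqrt{c_i}\,t_i\ge 0$, the constraint becomes $x=\sum s_iu_i$ and the product becomes $e^{-|s|^2/2}$. Hence $F(x)=e^{-\|x\|_K^2/2}$, where $\|\cdot\|_K$ is the gauge of $K$ (note $K$ contains $0$ and has nonempty interior, since the $u_i$ span $E$). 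Integrating in polar coordinates,
\[
\int_E F=|K|\,2^{n/2}\,\Gamma\!\left(\tfrac n2+1\right).
\]
Therefore
\[
|K|\ \ge\ \frac{(\pi/2)^{n/2}}{2^{n/2}\,\Gamma(n/2+1)}=\frac{\omega_n}{2^n},
\]
which is the inequality.

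\textbf{Equality, sufficiency.} Suppose $U_0,U_1,\dots,U_n$ and unit vectors $w_j\in\sum_{i\in U_j}\R_{>0}e_i$ are as in the statement, with $E=\mathrm{span}(w_1,\dots,w_n)$. The $w_j$ are orthonormal because they have disjoint supports. For $i\in U_j$, $u_i=\langle e_i,w_j\rangle w_j$ is a positive multiple of $w_j$. Hence $K=B_2^n\cap\mathrm{pos}(w_1,\dots,w_n)$ is an orthant of the ball, and $|K|=\omega_n/2^n$.

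\textbf{Equality, necessity (the main obstacle).} I would follow Barthe's transport proof of the reverse inequality. Equality forces equality in the key determinant inequality
\[
\det\Big(\sum c_i\theta_i v_i\otimes v_i\Big)\ge\prod\theta_i^{c_i}
\]
along the transport maps $\theta_i$, which are nonconstant because the $f_i$ are half-Gaussians rather than Gaussians. From this I would aim to show that the directions $v_i$, $i\notin U_0$, fall into $n$ mutually orthogonal lines, giving the partition $U_1,\dots,U_n$. It then remains to rule out opposite signs within one line. If both $v$ and $-v$ occur in a block, then $K$ contains the full segment in that direction, and a direct computation shows $|K|>\omega_n/2^n$; the same conclusion should also come from equality in the Prékopa--Leindler-type step, since the supports $[0,\infty)$ must transport consistently. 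Given the blocks, set $w_j$ to be the common direction of block $j$; orthogonality forces $u_i=\langle e_i,w_j\rangle w_j$, which yields $w_j\in\sum_{i\in U_j}\R_{>0}e_i$. If Barthe's equality analysis proves hard to adapt, I would use the alternative route via Valdimarsson's characterization of equality in geometric Brascamp--Lieb, adapted to the reverse form.
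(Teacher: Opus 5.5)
Your proof of the inequality is correct and is the paper's argument. The paper also expresses $|P_E(B_2^m\cap\R^m_+)|$ through the integral of a Gaussian in the gauge, writes $P_E=\sum_i c_iu_i\otimes u_i$ with $\sum_i c_i=n$, and applies Barthe's reverse Brascamp--Lieb inequality to half-Gaussians; $e^{-t^2}$ versus $e^{-t^2/2}$ is immaterial. Your sufficiency argument is the $q=2$ case of Lemma~\ref{lem:equ-projection}.

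The gap is the necessity direction of the equality case, which is the only part of the statement beyond the inequality. You say equality forces equality in $\det(\sum_i c_i\theta_iv_i\otimes v_i)\ge\prod_i\theta_i^{c_i}$ along the transport, and that you ``would aim to show'' the $v_i$ fall into $n$ orthogonal lines. That leaves out the whole mechanism: nothing in the proposal explains why this determinant identity constrains the configuration $(v_i)$ at all.

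Along your route, three steps are needed.
\begin{enumerate}
\item The integrand $\prod_i f_i(T_i)^{c_i}(T_i')^{c_i}$ is strictly positive and the $T_i'$ are continuous. So equality of the integrals gives the determinant identity for \emph{every} $y\in E$, with $\theta_i=T_i'(\langle y,v_i\rangle)$.
\item By Cauchy--Binet, $\det(\sum_i c_i\theta_iv_i\otimes v_i)=\sum_{|I|=n}\lambda_I\prod_{i\in I}\theta_i$, where $\lambda_I=\prod_{i\in I}c_i\cdot\det[v_i]_{i\in I}^2$, $\sum_I\lambda_I=1$ and $\sum_{I\ni i}\lambda_I=c_i$. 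The determinant inequality is therefore weighted AM--GM, and equality forces $\prod_{i\in I}\theta_i$ to take the same value on all bases $I$.
\item If $v_i,v_k$ lie in a common circuit, there is a basis $B$ with $i\in B$, $k\notin B$, and $B\setminus\{i\}\cup\{k\}$ also a basis. Hence $T_i'(\langle y,v_i\rangle)=T_k'(\langle y,v_k\rangle)$ for all $y$. If $v_i\neq\pm v_k$, this makes $T_k'$ constant, which is impossible because $T_k$ maps $\R$ onto $(0,\infty)$.
\end{enumerate}
Consequently every connected component of the matroid of $(v_i)$ is a single line, and there are exactly $n$ such lines $\R w_j$. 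Finally, $\sum_j d_jw_j\otimes w_j=\mathrm{Id}_E$ with independent unit vectors $w_j$ forces $d_j=1$ and $\langle w_j,w_k\rangle=0$ for $j\neq k$.

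Alternatively, do what the paper does. Your fallback is exactly the Bor\"oczky--Kalantzopoulos--Xi characterization of equality in the reverse inequality. Since half-Gaussians are not Gaussian, each line $\R u_i$ must be an independent subspace. This yields $u_k=\pm u_i$ or $u_k\perp u_i$ for all $k$, and $F_{dep}=\{0\}$.

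Your exclusion of opposite signs within a block is sound, but carry it out. Suppose block $j$ has positive and negative parts with $\ell_2$-weights $\alpha_j,\beta_j$, where $\alpha_j^2+\beta_j^2=1$. Then $K$ is, orthant by orthant, a piece of an ellipsoid, and $|K|=2^{-n}\omega_n\prod_j(\alpha_j+\beta_j)$. This exceeds $2^{-n}\omega_n$ unless $\alpha_j\beta_j=0$ for every $j$. The paper argues differently: $f_i$ and $f_k$ would be translates of one function $h_j$, yet their half-line supports point in opposite directions.
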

    The approach above only yields an equality characterization of Conjecture~\ref{conj:Rogers-Shephard-ineq-p-zonoids} for asymmetric $L_2$-zonotopes. To overcome this, we give an alternative proof of Conjecture~\ref{conj:Rogers-Shephard-ineq-p-zonoids} directly using the continuous version of the reverse Brascamp--Lieb inequality \cite{B-04}.
    
    \begin{restatable}{theorem}{RSInqTwo}
    \label{thm:Rogers-Shephard-ineq-2-zonoids}%
        Conjecture~\ref{conj:Rogers-Shephard-ineq-p-zonoids} holds for $ p =2 $% 
    \MainTheoremExtra
    \end{restatable}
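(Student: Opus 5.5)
The plan is to reduce Conjecture~\ref{conj:Rogers-Shephard-ineq-p-zonoids} for $p=2$ to Barthe's continuous reverse Brascamp--Lieb inequality \cite{B-04}, applied to half-Gaussians. Write $h_Z(x)^2=\int_{\s^{n-1}}\langle x,u\rangle_+^2\,d\mu(u)$ for a finite positive measure $\mu$. Then
\[
h^2_{Z\oplus_2 -Z}(x)=\int\langle x,u\rangle^2\,d\mu(u)=\langle Ax,x\rangle,\qquad A=\int u\otimes u\,d\mu(u),
\]
so $Z\oplus_2-Z=A^{1/2}B_2^n$ is an ellipsoid and $|Z\oplus_2-Z|=\omega_n\det(A)^{1/2}$, where $\omega_n=|B_2^n|$.

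\emph{Degenerate case.} If $A$ is singular, then $\mu$ is supported in a hyperplane, so $Z$ is contained in that hyperplane. Both sides of \eqref{eq:Rogers-Shephard-ineq-p-zonoids-conj} then vanish, which is the degenerate equality case.

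\emph{Normalization.} Otherwise, applying $A^{-1/2}$ (both sides scale by the same determinant, and linear images of asymmetric $L_2$-zonoids are again such) we may assume $\int u\otimes u\,d\mu=\mathrm{Id}$, possibly after replacing $\mu$ by a measure on $\R^n$. The claim becomes
\[
|Z|\ge 2^{-n}\omega_n=|B_2^n\cap\R^n_+|.
\]

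\emph{Identifying $Z$ as a sup-convolution ball.} Consider the set of $x=\int\theta(u)\,u\,d\mu(u)$ with $\theta\ge0$ and $\int\theta^2\,d\mu\le1$. By Cauchy--Schwarz, its support function is $\sup_{\theta\ge0,\|\theta\|_{L_2(\mu)}\le1}\int\theta\langle x,u\rangle\,d\mu=\|\langle x,\cdot\rangle_+\|_{L_2(\mu)}=h_Z(x)$, so this set is exactly $Z$. Its gauge is
\[
\|x\|_Z=\inf\Bigl\{\|\theta\|_{L_2(\mu)}:\ \theta\ge0,\ x=\int\theta u\,d\mu\Bigr\}.
\]
Apply Barthe's inequality with all $f_u(t)=e^{-t^2/2}\mathbbm 1_{t\ge0}$. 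The supremal function on the left is
\[
\sup\Bigl\{\exp\Bigl(-\tfrac12\int\theta^2\,d\mu\Bigr):\ \theta\ge0,\ x=\int\theta u\,d\mu\Bigr\}=e^{-\|x\|_Z^2/2}.
\]
Its integral is $2^{n/2}\Gamma(\tfrac n2+1)|Z|$. The right side is $\exp\int\log\sqrt{\pi/2}\,d\mu=(\pi/2)^{n/2}$, because the isotropy condition gives $\mu(\s^{n-1})=n$. Combining the two,
\[
|Z|\ge \frac{\pi^{n/2}}{2^n\,\Gamma(\tfrac n2+1)}=\frac{\omega_n}{2^n},
\]
which is the inequality. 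For $Z=B_2^n\cap\R^n_+$, i.e.\ $\mu=\sum_i\delta_{e_i}$, there is equality, and it transfers to all linear images.

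\emph{Equality cases (main obstacle).} We must show that equality forces $\mu$ to be supported on $n$ linearly independent directions, which after normalization are orthonormal. The plan is to revisit Barthe's transport proof of the continuous reverse Brascamp--Lieb inequality. There one takes monotone transports $T_u$ pushing a standard Gaussian onto the half-Gaussian, and uses the map
\[
\Theta(y)=\int T_u(\langle y,u\rangle)\,u\,d\mu(u).
\]
The proof relies on two inequalities:
\[
\det\Bigl(\int T_u'(\langle y,u\rangle)\,u\otimes u\,d\mu\Bigr)\ge\exp\int\log T_u'(\langle y,u\rangle)\,d\mu,
\]
a Ky Fan/Ball--Barthe determinant inequality under the isotropy constraint, and the injectivity of $\Theta$. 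Here the weights $T_u'$ are non-constant, since the half-Gaussian is not Gaussian. Equality in the determinant inequality for a.e.\ $y$ with non-constant weights should force $\mu$ to split as a sum of masses on an orthonormal basis, with each weight depending on a single coordinate. Carrying out this rigidity argument carefully, including handling $\mu$ with continuous parts and directions $u,-u$ both charged, is where I expect the main work to lie. Once $\mu=\sum_{i=1}^n c_i\delta_{u_i}$ is established, $Z=[0,\sqrt{c_1}u_1]\oplus_2\cdots\oplus_2[0,\sqrt{c_n}u_n]$ is a linear image of $B_2^n\cap\R^n_+$, as claimed.
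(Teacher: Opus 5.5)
Your proof of the inequality is correct and is essentially the paper's argument. It uses the isotropic normalization, the bound $\|x\|_Z^2\le\int\theta^2\,d\mu$ for every $\theta\ge0$ with $x=\int\theta u\,d\mu$, and Barthe's continuous reverse Brascamp--Lieb inequality (Theorem~\ref{thm:cont-BL}) applied to half-Gaussians. Your normalization by $A^{-1/2}$, using that $Z\oplus_2-Z$ is an ellipsoid, is a more elementary substitute for the Lewis/Lutwak--Yang--Zhang position the paper invokes. After applying $A^{-1/2}$, you should push the transformed measure back to $\s^{n-1}$ with density $\|A^{-1/2}u\|_2^2$; this is harmless by $2$-homogeneity, and it is needed because Barthe's theorem and the identity $\mu(\s^{n-1})=n$ concern isotropic measures on the sphere. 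Only the one-sided bound on $\|x\|_Z$ is used, so you do not need to check that your set of representable points is closed.

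The gap is the equality characterization, which you leave as a plan to rederive Barthe's rigidity from the transport proof. That work is unnecessary. The equality part of Barthe's theorem \cite{B-04}, recorded in Theorem~\ref{thm:cont-BL}, already says: if equality holds and no $f_u$ is Gaussian, then $\supp\mu\subset\{\pm u_1,\dots,\pm u_n\}$ for some orthonormal basis. Since $e^{-t^2/2}\mathbbm{1}_{t\ge0}$ is not Gaussian, this applies.

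However, this does \emph{not} give $\mu=\sum_i c_i\delta_{u_i}$, as you assume at the end. It only gives $\mu=\sum_i(a_i^2\delta_{-u_i}+b_i^2\delta_{u_i})$, that is, $Z=\bigoplus^{(2)}_{i\in[n]}[-a_iu_i,b_iu_i]$. This is an $L_2$-sum of $n$ segments that need not have the origin as an endpoint. The missing step, which the paper supplies through Proposition~\ref{prop: RS-n-seg}, is a direct computation for such $Z$. Splitting into orthants gives $|Z|=\prod_i(a_i+b_i)\,|B_2^n\cap\R^n_+|$. Meanwhile $Z\oplus_2-Z$ is the ellipsoid with semi-axes $(a_i^2+b_i^2)^{1/2}u_i$, so $|Z\oplus_2-Z|=\prod_i(a_i^2+b_i^2)^{1/2}|B_2^n|$. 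Equality therefore forces $a_i+b_i=(a_i^2+b_i^2)^{1/2}$, i.e.\ $a_ib_i=0$, for every $i$. With these two ingredients the ``only if'' direction follows. As written, your proposal proves the inequality and only the ``if'' direction of the equality statement.
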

    We further investigate Conjecture~\ref{conj:projection-cap-conj} in the more general case where $B_q^m$ is replaced by an unconditional convex body $K$. We recall that a convex body $K$ is unconditional if for any $x = (x_1,\ldots,x_n) \in K$, one has $(\varepsilon_1 x_{1},\ldots,\varepsilon_n x_{n}) \in K$ for 
    % every permutation $ \sigma$ and 
    every $ \varepsilon \in \{-1,1\}^n$.

    \begin{conj} \label{conj:projection-cap-conj-uncon}
        Let  $m\ge n \geq 1$, let $E$ be an $n$-dimensional subspace of $\mathbb{R}^m$ and $K$ be an unconditional convex body in $\R^m$. Then 
        \begin{equation} \label{eq:projection-cap-conj-uncon}
            | P_E K|_n\leq 2^n|P_E(K \cap \mathbb{R}^m_+)|_n .
            % \geq \frac{1}{2^n} | P_E K|_n.
        \end{equation}
    \end{conj}
    In the following proposition, we give an affirmative answer to Conjecture~\ref{conj:projection-cap-conj-uncon} when $E$ is a hyperplane in $\R^m $, and we also establish a sharp lower bound. 
    \begin{restatable}{prop}{ProjectionIneqHyperplane} \label{prop:Projection-ineq-hyperplane}
    % \begin{prop} \label{prop:Projection-ineq-hyperplane}
    For any $u \in \s^n$ and for any unconditional convex body $K \subset \R^{n+1}$, we have
    \begin{equation}
        \label{eq:Projection-ineq-hyperplane}
        \binom{n}{\lfloor \frac{n+1}{2} \rfloor} |P_{u^\perp} ( K \cap \R^{n+1}_+ )|_n \leq |P_{u^\perp} K|_n \leq 2^n |P_{u^\perp} ( K \cap \R^{n+1}_+ )|_n.
    \end{equation}
    \MainTheoremExtraTwo
    % \end{prop}
    \end{restatable}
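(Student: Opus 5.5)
The plan is to express both projection volumes as surface integrals over the curved part of $\partial(K\cap\R^{n+1}_+)$, reduce the two inequalities in \eqref{eq:Projection-ineq-hyperplane} to a pointwise inequality for a fixed nonnegative vector, and then prove that pointwise inequality.

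\textbf{Setting up the integral formulas.} Write $K_+:=K\cap\R^{n+1}_+$. Reflecting coordinates maps $K$ onto itself and preserves $|P_{u^\perp}\cdot|_n$, so we may assume $u_i\ge 0$ for all $i$. Cauchy's projection formula gives, for any convex body $L$,
\[
|P_{u^\perp}L|_n=\tfrac12\int_{\partial L}|\langle \nu_L(x),u\rangle|\,dS(x).
\]
Let $S:=\partial K\cap\R^{n+1}_+$ be the curved part of $\partial K_+$. Let $F_i:=K_+\cap e_i^\perp$ be the coordinate facets of $K_+$, whose outer normal is $-e_i$. By unconditionality, $\partial K$ is the union over $\varepsilon\in\{-1,1\}^{n+1}$ of the pieces $\varepsilon S$, and the normal at $\varepsilon x$ is $\varepsilon\nu(x)$ (coordinatewise product). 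Since $K_+\subset\R^{n+1}_+$, the outer normal $\nu$ on $S$ has nonnegative coordinates. Hence, with $a(x)_i:=\nu_i(x)u_i\ge 0$,
\[
|P_{u^\perp}K|_n=\tfrac12\int_S\sum_{\varepsilon}\Big|\sum_i\varepsilon_i a_i(x)\Big|\,dS(x).
\]

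For $K_+$ we get
\[
|P_{u^\perp}K_+|_n=\tfrac12\Big(\int_S\langle\nu,u\rangle\,dS+\sum_i u_i|F_i|_n\Big).
\]
The closure relation $\int_{\partial K_+}\nu\,dS=0$ gives $\int_S\nu_i\,dS=|F_i|_n$. Substituting this in yields
\[
|P_{u^\perp}K_+|_n=\int_S\sum_i a_i(x)\,dS(x).
\]

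\textbf{Reduction to a pointwise inequality.} Both inequalities in \eqref{eq:Projection-ineq-hyperplane} now follow by integrating over $S$ the pointwise bound, for $a\in\R^{n+1}_+$,
\[
2\binom{n}{\lfloor\frac{n+1}{2}\rfloor}\sum_i a_i\;\le\;\sum_{\varepsilon\in\{\pm1\}^{n+1}}\Big|\sum_i\varepsilon_ia_i\Big|\;\le\;2^{n+1}\sum_i a_i .
\]

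\textbf{Proving the pointwise bound.} The right inequality is the triangle inequality. For the left one, the function $\Phi(a)=\sum_\varepsilon|\langle\varepsilon,a\rangle|$ is convex, $1$-homogeneous and invariant under permutations of the coordinates. Its minimum on the simplex $\{a\ge0,\ \sum a_i=1\}$ is therefore attained at the barycenter $a=\frac1{n+1}(1,\dots,1)$: average any minimizer over all permutations and use convexity. It remains to compute, with $N=n+1$ and $S_N=\sum_{i=1}^N\varepsilon_i$ for i.i.d. Rademacher signs,
\[
\Phi\big(\tfrac1N\mathbf 1\big)=\frac{2^N}{N}\,\mathbb E|S_N|.
\]
We will use the classical identity
\[
\mathbb E|S_N|=\frac{N}{2^{N-1}}\binom{N-1}{\lfloor\frac{N-1}{2}\rfloor},
\]
which follows from the reflection principle or a direct summation of binomial terms. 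This gives $\Phi\ge 2\binom{n}{\lfloor n/2\rfloor}$. Since $\binom{n}{\lfloor n/2\rfloor}=\binom{n}{\lfloor (n+1)/2\rfloor}$, this is the stated constant.

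\textbf{Main obstacle.} The step requiring the most care is the boundary-measure bookkeeping. We must justify $\int_S\nu_i=|F_i|_n$ for a general (nonsmooth) convex $K$, and check that the pieces $\varepsilon S$ overlap only in sets of surface measure zero. Both follow from approximating by smooth or polytopal unconditional bodies and passing to the limit.

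Sharpness of the left inequality should come from bodies whose normals concentrate near the diagonal direction $a\propto\mathbf 1$ with $u=\frac{1}{\sqrt{n+1}}\mathbf 1$, for example $K=B_1^{n+1}$. Sharpness of the right inequality should come from bodies whose normals on $S$ concentrate near coordinate directions, for example $K$ close to a cube.
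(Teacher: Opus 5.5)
The step you treat as a harmless normalization, ``we may assume $u_i\ge0$'', is where the proof breaks, and it discards exactly the nontrivial part of the upper bound. The coordinate reflection $x\mapsto\varepsilon x$ preserves $K$, but it does not preserve $K_+=K\cap\R^{n+1}_+$. So $|P_{(\varepsilon u)^\perp}K_+|_n$ is in general different from $|P_{u^\perp}K_+|_n$.

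A small example shows this. Take $n=1$ and $K=B_2^2$. For $u=(1,1)/\sqrt2$ one has $|P_{u^\perp}K_+|=\sqrt2$, but for $u=(1,-1)/\sqrt2$ one has $|P_{u^\perp}K_+|=1$. In both cases $|P_{u^\perp}K|=2$. The upper bound is tight only in the second case, which your argument never reaches.

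In your notation, write $u=\varepsilon|u|$ and $a(x)=(|u_i|\nu_i(x))_i$. The correct formula is $|P_{u^\perp}K_+|_n=\tfrac12\int_S\bigl(|\langle\varepsilon,a\rangle|+\|a\|_1\bigr)\,dS$, which can be strictly smaller than $\int_S\|a\|_1\,dS$ when $\varepsilon$ has mixed signs. For $u\in\pm\R^{n+1}_+$ the upper bound is just the triangle inequality, as you found. The real content of the statement is the mixed-sign case, which needs the pointwise estimate
\[
\sum_{\sigma\in\{-1,1\}^{n+1}}|\langle\sigma,a\rangle|\le 2^n\bigl(\|a\|_1+|\langle\varepsilon,a\rangle|\bigr),\qquad a\in\R^{n+1}_+ .
\]
This is the paper's Lemma~\ref{lem:Weighted-Expectation}. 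To prove it, pair $\sigma$ with $\varepsilon\sigma$ and write $\langle\sigma,a\rangle=A+B$ and $\langle\varepsilon\sigma,a\rangle=A-B$. Here $A$ and $B$ are the partial sums over $\{\varepsilon_i=1\}$ and $\{\varepsilon_i=-1\}$. Then $|A+B|+|A-B|=2\max(|A|,|B|)\le\|a\|_1+|\langle\varepsilon,a\rangle|$, and summing over $\sigma$ gives the estimate. Integrating over $S$ then yields $|P_{u^\perp}K|_n\le 2^n|P_{u^\perp}K_+|_n$ for every $u$.

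Your lower bound survives, but not for the reason you give. Since $|\langle\varepsilon,a\rangle|\le\|a\|_1$, passing from $|u|$ to $\varepsilon|u|$ can only decrease $|P_{u^\perp}K_+|_n$, while $|P_{u^\perp}K|_n$ is unchanged. So the case $u\ge0$ does imply the general one, and you need to say this explicitly rather than invoke invariance. With that remark, your argument for the left inequality matches the paper's. Your permutation-averaging step is a cleaner substitute for the iteration in Lemma~\ref{lem:Weighted-Expectation-lo}.

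A minor point: the nonnegativity of $\nu$ on $S$ comes from unconditionality, not from $K_+\subset\R^{n+1}_+$. To see it, reflect a boundary point with $x_i>0$ across $e_i^\perp$. The property holds only at $\mathcal H^n$-a.e.\ point of $S$, which suffices.
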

    For $K=B_q^m$, the upper bound gives the inequality of Conjecture~\ref{conj:projection-cap-conj}, and we also establish the equality case, while the lower bound is improved with a constant depending on $q$ in Proposition~\ref{prop:projection_hyperplane_lq_ball}. Using Theorem~\ref{thm: equivalent-RS-Proj}, this also establishes Conjecture 
   ~\ref{conj:Rogers-Shephard-ineq-p-zonoids} in the case where $Z$ is the $L_p$-sum of $n+1$ one-sided segments. See also Corollary~\ref{cor:Rogers-Shephard-ineq-p-zonotope_hyperplane}.

    In the following proposition, we prove that Conjecture ~\ref{conj:Rogers-Shephard-ineq-p-zonoids} is true when the subspace $E$ has dimension $1$. %To state the equality cases, we will use the notation
   % \begin{equation*}     F(K;v):=\{k\in K:\langle k,v\rangle =h_K(v)\}  \end{equation*}
   % to denote the facet of a convex body $K\subset\mathbb{R}^m$ in the direction $v\in \mathbb{S}^{m-1}$.

    \begin{restatable}{prop}{ProjectionOne}\label{prop:projection_ineq_one_dimension}
        Let $K\subset\mathbb{R}^m$ be an unconditional convex body, and $v\in \mathbb{S}^{m-1}$ be a direction so that $E:=\textup{span}(v)$ is a subspace of dimension $1$, then 
        \begin{equation*}
            |P_E(K\cap \mathbb{R}^m_+)|_1\leq|P_EK|_1\leq 2|P_E(K\cap \mathbb{R}^m_+)|_1.
        \end{equation*}
        \MainTheoremExtraThree
        % Moreover, 
        % in
        % % f we define 
        % % \begin{align*}
        % %     v_+:=\sum_{v_i>0}v_ie_i\qquad \textup{and}\qquad v_-:=\sum_{v_i<0}v_ie_i,
        % % \end{align*}
        % % then equality holds on the right if and only if either $v\in \mathbb{R}^m_+\cup (-\mathbb{R}^m_+)$, or $v\notin \mathbb{R}^m_+\cup (-\mathbb{R}^m_+)$ and the faces $F(K;v_+)$ and $F(K;v_-)$ intersect. 
        % the case where $K$ is the ball $B_q^m$ with $1\leq q<\infty$, we see that equality holds if and only if $v\in \mathbb{R}^m_+\cup (-\mathbb{R}^m_+)$.  The left inequality is sharp.
    \end{restatable}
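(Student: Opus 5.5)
Since $E=\mathrm{span}(v)$ is one-dimensional, the projection of a convex set $L$ onto $E$ is the segment $[-h_L(-v),h_L(v)]\,v$. Its length is therefore $h_L(v)+h_L(-v)$, so everything reduces to computing support functions. For unconditional $K$ we have $P_EK=[-h_K(v),h_K(v)]v$ with $h_K(v)=h_K(|v|)$, where $|v|=(|v_1|,\dots,|v_m|)$. Hence $|P_EK|_1=2h_K(|v|)$. Next write $v=v_+-v_-$ with $v_+,v_-\in\R^m_+$ having disjoint supports, so $|v|=v_++v_-$. For $x\in K\cap\R^m_+$ we get $\langle x,v\rangle\le\langle x,v_+\rangle$, and equality is attained by zeroing coordinates (unconditionality plus convexity keeps the point in $K\cap\R^m_+$). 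Thus $h_{K\cap\R^m_+}(v)=h_K(v_+)$ and $h_{K\cap\R^m_+}(-v)=h_K(v_-)$, which gives
\begin{equation*}
|P_E(K\cap\R^m_+)|_1=h_K(v_+)+h_K(v_-).
\end{equation*}

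Both inequalities now follow from elementary properties of $h_K$. Sublinearity gives $h_K(|v|)\le h_K(v_+)+h_K(v_-)$, which is the right inequality. For the left inequality, we use that $h_K$ is monotone on $\R^m_+$ for unconditional $K$ (again by zeroing coordinates). This gives $h_K(v_\pm)\le h_K(|v|)$, so $h_K(v_+)+h_K(v_-)\le 2h_K(|v|)$. Sharpness of the left inequality is seen at $v\in\R^m_+$: there $v_-=0$ and both sides equal $2h_K(v)$.

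For the equality case on the right when $K=B_q^m$, we have $h_K(x)=\|x\|_p$ with $p$ the H\"older conjugate of $q$, and $1<p\le\infty$ since $q<\infty$. If $v\in\pm\R^m_+$, one of $v_\pm$ vanishes and equality is trivial. Otherwise $v_+$ and $v_-$ are both nonzero with disjoint supports, and
\begin{equation*}
\|v_++v_-\|_p=(\|v_+\|_p^p+\|v_-\|_p^p)^{1/p}<\|v_+\|_p+\|v_-\|_p
\end{equation*}
for finite $p>1$; for $p=\infty$ we get $\max(\|v_+\|_\infty,\|v_-\|_\infty)<\|v_+\|_\infty+\|v_-\|_\infty$. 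So the inequality is strict. The only step needing care is the identification $h_{K\cap\R^m_+}(v)=h_K(v_+)$, i.e.\ that projecting a point of $K$ onto the coordinates in $\supp v_+$ stays in $K$. This holds because that projection is the average of two sign-flips of the point.
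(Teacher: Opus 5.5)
Your proofs of both inequalities and of the equality case for $B_q^m$ are correct, and they are close to the paper's argument. The paper proves the right inequality through the containment $K\subset (K\cap\R^m_+)-(K\cap\R^m_+)$. It uses the identity $h_{K\cap\R^m_+}(\pm v)=h_K(v_\pm)$ only to characterize equality, via the faces $F(K;v_+)$ and $F(K;v_-)$ intersecting. You instead let that identity do both jobs through sublinearity. You also give an explicit $\ell_p$-norm computation for $B_q^m$, which the paper leaves implicit.

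The genuine error is your sharpness claim for the left inequality. At $v\in\R^m_+$ you have $v_-=0$, so $|P_E(K\cap\R^m_+)|_1=h_K(v_+)+h_K(0)=h_K(v)$, while $|P_EK|_1=2h_K(v)$. The two sides are therefore not both $2h_K(v)$. An unconditional convex body satisfies $K=-K$, so $0\in\operatorname{int}K$ and $h_K(v)>0$; hence the left inequality is strict there. What you exhibited is the equality case of the \emph{right} inequality.

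Your own monotonicity step shows more: equality on the left forces $h_K(v_+)=h_K(v_-)=h_K(v_++v_-)>0$. So both $v_+$ and $v_-$ must be nonzero, and no $v\in\R^m_+\cup(-\R^m_+)$ can ever witness sharpness, for any $K$.

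A correct witness needs a support function that does not grow when you add disjointly supported vectors of the same size. Take $K=B_1^m$, so $h_K=\|\cdot\|_\infty$, and $v=\frac{1}{\sqrt2}(e_i-e_j)$ with $i\neq j$. Then $\|v_+\|_\infty=\|v_-\|_\infty=\|v_++v_-\|_\infty=\frac{1}{\sqrt2}$, which gives $|P_E(B_1^m\cap\R^m_+)|_1=\sqrt2=|P_EB_1^m|_1$. This is the example the paper uses.
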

     % Notice that there is equality on the left inequality, for example, for $K=B_1^m$ and $v=\frac{1}{\sqrt{2}}(e_i-e_j)$, for some $i\neq j$. {\color{red} Put some computation? in the appendix or something}\\

    Next, we study local maximality in Conjecture~\ref{conj:Rogers-Shephard-ineq-p-zonoids} when $Z$ is a perturbation of a linear image of $B^n_q \cap \R^n_+$.
    \begin{restatable}{theorem}{LocalMax}
    \label{thm:local-max}
    For $p >1$, let $\zeta$ be an asymmetric $L_p$-zonoid, and let $u_1, \ldots,u_n$ be  linear independent vectors of $\R^n$. For $\varepsilon\geq 0$, denote $Z_{\varepsilon} = [0,u_1] \oplus_p \ldots \oplus_p [0,u_n] \oplus_p \left(\varepsilon^{\frac{1}{p}} \zeta\right) $. There exists $\varepsilon_0 = \varepsilon_0 (\zeta,u_1,\ldots,u_n)$ such that for any $0 < \varepsilon \leq \varepsilon_0$, we have
    \begin{equation}
        \label{eq:local-max}
        | Z_{\varepsilon} \oplus_p - Z_{\varepsilon} | \leq 2^n | Z_{\varepsilon} |.
    \end{equation}
    Equality holds if and only if $\zeta$ is of the form $\zeta = [0,a_1u_1] \oplus_p \ldots \oplus_p [0,a_nu_n]$ for some $a_i \geq 0$.
    \end{restatable}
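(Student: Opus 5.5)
The plan is a first-order expansion in $\varepsilon$ of both sides, reduced to a linear inequality in the generating measure of $\zeta$.

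\textbf{Reductions.} Both sides of \eqref{eq:local-max} transform the same way under a nonsingular linear map, so we may assume $u_i=e_i$. Then $Z_0=[0,e_1]\oplus_p\cdots\oplus_p[0,e_n]=B_q^n\cap\R^n_+$ and $Z_0\oplus_p -Z_0=B_q^n$, so equality holds at $\varepsilon=0$. Write
\[
h_\zeta^p(x)=\int_{\s^{n-1}}\langle x,u\rangle_+^p\,d\mu(u).
\]
Then
\[
h^p_{Z_\varepsilon}=h^p_{Z_0}+\varepsilon h^p_\zeta
\qquad\text{and}\qquad
h^p_{Z_\varepsilon\oplus_p -Z_\varepsilon}=h^p_{B_q^n}+\varepsilon\bigl(h^p_\zeta(x)+h^p_\zeta(-x)\bigr).
\]
Any atom of $\mu$ at $e_i$ can be absorbed into the $i$-th segment, since $[0,e_i]\oplus_p\varepsilon^{1/p}[0,ae_i]=[0,(1+\varepsilon a^p)^{1/p}e_i]$. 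After this absorption we are again at an equality configuration, up to a diagonal map. So it suffices to show strict inequality whenever $\mu$ is not concentrated on $\{e_1,\dots,e_n\}$.

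\textbf{Case 1: $\mu$ charges directions outside $\R^n_+$.} Then $h_\zeta(-e_i)>0$ for some $i$. The body $Z_0$ has the facet $Z_0\cap e_i^\perp$ with $h_{Z_0}(-e_i)=0$. Pushing this facet outward by $h_{Z_\varepsilon}(-e_i)\approx\varepsilon^{1/p}h_\zeta(-e_i)$ increases $|Z_\varepsilon|$ by order $\varepsilon^{1/p}\gg\varepsilon$. On the symmetric side, $h_{B_q^n}>0$ on the sphere, so by the $L_p$ first-variation formula
\[
\frac{d}{d\varepsilon}\Big|_{0}|K\oplus_p\varepsilon^{1/p}L|=\frac1p\int h_L^p h_K^{1-p}\,dS_K,
\]
the volume $|Z_\varepsilon\oplus_p-Z_\varepsilon|$ grows only like $2^n|Z_0|+O(\varepsilon)$. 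Hence there is strict inequality for small $\varepsilon$.

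\textbf{Case 2: $\mu$ is supported in $\R^n_+$.} Now $h_\zeta(-e_i)=0$, so the facets of $Z_0$ do not contribute. Both first derivatives are finite and linear in $\mu$:
\[
2^n\,\frac1p\int_{\text{curved part}}h_\zeta^p\,dS_p(Z_0)
\qquad\text{versus}\qquad
\frac1p\int_{\s^{n-1}}\bigl(h^p_\zeta(x)+h^p_\zeta(-x)\bigr)\,dS_p(B_q^n),
\]
where $dS_p=h^{1-p}\,dS$. By linearity it suffices to prove the derivative inequality for a single segment $\zeta=[0,v]$ with $v\in\R^n_+$. For such $\zeta$, the body $Z_\varepsilon$ is the $L_p$-sum of $n+1$ one-sided segments. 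Proposition~\ref{prop:Projection-ineq-hyperplane}, read through Theorem~\ref{thm: equivalent-RS-Proj} (the corollary for $n+1$ segments), gives $|Z_\varepsilon\oplus_p-Z_\varepsilon|\le2^n|Z_\varepsilon|$ for every $\varepsilon$. Since equality holds at $\varepsilon=0$, differentiating gives the derivative inequality. Its equality case for $B_q^{n+1}$ forces $v$ to be parallel to some $e_i$. Writing $\delta(v)$ for the derivative gap, we get $\delta(v)>0$ for $v\notin\bigcup_i\R_+e_i$, and $\delta(\mu)=\int\delta\,d\mu$.

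\textbf{Main obstacle: uniformity.} The gap $\delta(u)$ tends to $0$ as $u\to e_i$, while the second-order remainder is $O(\varepsilon^2)$ with constants depending on $\zeta$. The first attempt is a second-order expansion near the equality configurations. After absorbing the $e_i$-components of $\mu$ into the segments (perturbing the $u_i$), show that $\delta(u)\ge c\,\mathrm{dist}(u,\{e_i\})^{r}$ dominates the remainder for $\varepsilon\le\varepsilon_0(\zeta)$. This is plausible because the remainder also vanishes to the same order as $\mu$ concentrates at the $e_i$. The equality statement then follows from the strict positivity of the leading term in both cases.
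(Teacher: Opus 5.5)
\textbf{Verdict: the plan is right, but there is a genuine gap.} Your architecture is the paper's: you normalize $u_i=e_i$ and study the right derivative at $0$ of $F(\varepsilon)=|Z_\varepsilon\oplus_p-Z_\varepsilon|-2^n|Z_\varepsilon|$. When $\zeta\not\subset\R^n_+$ you use the $\varepsilon^{1/p}$ growth of $|Z_\varepsilon|$ across a facet $Z_0\cap e_i^\perp$. When $\zeta\subset\R^n_+$ you compare first variations that are linear in $\mu$. Getting $\delta(v)\ge 0$ by differentiating the $(n+1)$-segment inequality of Corollary~\ref{cor:Rogers-Shephard-ineq-p-zonotope_hyperplane} at $\varepsilon=0$ is also legitimate.

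\textbf{The gap is the strictness step.} The equality characterization of that corollary only gives $F(\varepsilon)<0$ for each fixed $\varepsilon>0$ when $\zeta=[0,v]$ with $v\notin\bigcup_i\R_+e_i$. It says nothing about $F'(0^+)$: a function vanishing at $0$ and negative on $(0,\infty)$ can have zero right derivative, for instance $-\varepsilon^2$. So $\delta(v)>0$ is not established. With only $\delta\ge 0$ you get $F'(0^+)\le 0$, which does not fix the sign of $F(\varepsilon)$ for small $\varepsilon$.

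\textbf{How to close it.} What is missing is the direct computation the paper carries out. On $\s^{n-1}\cap\R^n_+$ one has $h_{Z_0}=h_{B_q^n}$ and $S_{Z_0}=S_{B_q^n}$. Split the symmetric first variation over the $2^n$ coordinate orthants and use the unconditionality of $B_q^n$ to bring each piece back to $\R^n_+$. For $v\in\s^{n-1}\cap\R^n_+$ this gives
\[
p\,\delta(v)=\sum_{\sigma\in\{-1,1\}^n}\int_{\s^{n-1}\cap\R^n_+}\Bigl(\langle v,u\rangle^p-\Bigl|\sum_{i=1}^n\sigma_iv_iu_i\Bigr|^p\Bigr)\,h_{B_q^n}^{1-p}(u)\,dS_{B_q^n}(u).
\]
Each integrand is nonnegative by the triangle inequality, so the hyperplane corollary is not even needed. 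If $v\neq e_k$ for all $k$, then $v_i,v_j>0$ for some $i\neq j$. For $\sigma_i=-\sigma_j$ the integrand is then strictly positive on the open positive orthant, which has positive $S_{B_q^n}$-measure. Hence $\delta(v)>0$.

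\textbf{The uniformity obstacle is illusory.} The theorem lets $\varepsilon_0$ depend on $\zeta$. If $\mu$ charges $(\s^{n-1}\cap\R^n_+)\setminus\{e_1,\dots,e_n\}$, then $\int\delta\,d\mu$ is a fixed positive number. So $F(\varepsilon)=-\varepsilon\int\delta\,d\mu+o(\varepsilon)<0$ for small $\varepsilon$. No second-order expansion or uniformity in $v$ is needed. Absorbing atoms at the $e_i$ is also unnecessary: they contribute $0$ to $\int\delta\,d\mu$, and absorbing them only makes the base configuration depend on $\varepsilon$. As written, your argument ends on an unproved ``plausible'' estimate for a difficulty that is not there.

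\textbf{A smaller point.} Lutwak's first-variation formula requires the origin in the interior. When $0\in\partial Z_0$, the existence and value of the right derivative of $|Z_\varepsilon|$ need a separate argument, which the paper supplies in Lemma~\ref{lem:LYZ-0inboundary}.
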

    In the next result, we study an average version of Conjectures~\ref{conj:projection-cap-conj} and~\ref{conj:projection-cap-conj-uncon}. To be more precise, if we integrate inequality \eqref{eq:projection-cap-conj-uncon} with respect to the Haar probability measure on the Grassmannian $G(m,n)$ of $n$-dimensional subspaces of $\R^m$, we get an inequality for the $n$th intrinsic volume. We recall that the $n$th intrinsic volume of $K \subset \R^m$ is defined by
    \begin{equation}
        V_n (K) = \frac{\tbinom{m}{n}\omega_{m}}{\omega_{m-n}\omega_n} \int_{G(m,n)} |P_E K|_n d \mu(E),
    \end{equation}
    where $\mu$ is the Haar probability measure on the Grassmannian $\mathrm{G}(m, n)$ of $n$-dimensional subspaces of $\mathbb{R}^m$. We prove that the average version of \eqref{eq:projection-cap-conj-uncon} is true when $K$ is a 1-unconditional zonoid, in particular, this gives an average version of Conjecture~\ref{conj:projection-cap-conj} for $1\le p\le2$. Recall that $K$ is 1-unconditional if for any $x = (x_1,\ldots,x_n) \in K$, permutation $\sigma$, and $\varepsilon\in \{-1,1\}^n$, we have $(\varepsilon_1 x_{\sigma(1)},\ldots,\varepsilon_n x_{\sigma(n)}) \in K$.
    \begin{restatable}{prop}{ProjectionIneqWeighted}
    \label{prop:Projection-ineq-weighted}
    Let $m\ge n$. If $K\subset\mathbb{R}^m$ is a $1$-unconditional zonoid, then
    \begin{equation}
        \label{eq:Projection-ineq-weighted}
        V_n \left(K  \right)\leq 2^n V_n \left(K \cap \R^m_+ \right).
        % \geq \frac{1}{2^{n}} .
    \end{equation}
    % In particular, we obtain
    % \begin{equation}
    %     \label{eq:Projection-ineq-weighted}
    %         V(B_q^m [n],B_2^m[m-n]) < 2^n V \left(B_q^{m} \cap \R^m_+ [n],B_2^m[m-n]\right).
    % \end{equation}
    Equality holds if and only if there exists $a\ge0$ such that $K=aB_\infty^m$.
\end{restatable}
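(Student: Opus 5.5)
The plan is to avoid working with individual projections $P_E$ and instead use two structural facts: the determinant formula for intrinsic volumes of zonoids, and the symmetry of $K$ under the hyperoctahedral group, which lets us compare $K$ with cubes.

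\textbf{Step 1 (determinant formula for $V_n(K)$).} Since $K$ is a zonoid, write $h_K(x)=\int_{\s^{m-1}}|\langle x,u\rangle|\,d\nu(u)$ for an even measure $\nu$. Because $K$ is $1$-unconditional, we may take $\nu$ invariant under coordinate sign changes and permutations. The standard formula for zonoids gives
\begin{equation*}
V_n(K)=\frac{2^n}{n!}\int\cdots\int |u_1\wedge\cdots\wedge u_n|\,d\nu(u_1)\cdots d\nu(u_n).
\end{equation*}
Averaging each $u_j$ over the $2^m$ sign patterns $\varepsilon u_j$ is allowed by the invariance of $\nu$.

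\textbf{Step 2 (a zonotope inside $K\cap\R^m_+$).} For each atom $u$ of a zonotope approximation, group the $2^m$ sign-flipped copies $\varepsilon u$. Their contribution to $K$ is the zonotope $Z_u=2^{-m}\sum_\varepsilon[-\varepsilon u,\varepsilon u]$, which is itself unconditional. I would then show the key inclusion
\begin{equation*}
\tfrac12 Z_u+\tfrac12 c_u\subset Z_u\cap\R^m_+ \quad\text{up to a translation } c_u\in\R^m_+,
\end{equation*}
or a slightly weaker statement. Concretely, the sum $2^{-m}\sum_\varepsilon[0,|\varepsilon u|]$ of one-sided segments $[0,(|u_1|,\dots,|u_m|)]$, suitably rearranged, sits in $K\cap\R^m_+$. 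If that fails, I would instead use the coordinatewise absolute-value map $x\mapsto(|x_1|,\dots,|x_m|)$, which maps $K$ onto $K\cap\R^m_+$. Intrinsic volumes are translation invariant and monotone under inclusion, and summing over atoms then gives a lower bound on $V_n(K\cap\R^m_+)$ by $V_n$ of a zonotope $Z'$.

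\textbf{Step 3 (comparison and equality).} The goal is $V_n(Z')\ge 2^{-n}V_n(K)$. By Step 1 this reduces to comparing $\mathbb E|u_1\wedge\cdots\wedge u_n|$ over symmetrised atoms with the corresponding expression for the halved segments. The inequality should become an instance of $|\det|$ being at most the average over sign flips. That averaging is an equality only when every $u_j$ is a coordinate vector, which forces $\nu$ to be concentrated on $\{\pm e_i\}$ with equal weights by permutation invariance, that is $K=aB_\infty^m$. As a consistency check, for $K=B_\infty^m$ the inequality is the identity $V_n(B_\infty^m)=2^nV_n([0,1]^m)$.

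\textbf{Main obstacle.} Step 2 is the step I expect to be hardest: finding a zonoid (or a union of translates) inside $K\cap\R^m_+$ whose $V_n$ is provably at least $2^{-n}V_n(K)$. The issue is that $K\cap\R^m_+$ need not be a zonoid, so its intrinsic volume has no determinant formula and must be bounded below through inclusions. I would first try an inductive proof on the number of coordinates with nonzero entries, combined with the Cauchy–Kubota formula restricted to coordinate subspaces. This is where permutation invariance (and not merely unconditionality) should be used.
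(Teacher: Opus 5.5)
There is a genuine gap, and it is where you expected: Step~2 gives no valid lower bound for $V_n(K\cap\R^m_+)$, and without one Step~3 has nothing to compare.

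The inclusion you aim for is false in general. For $K=B_2^m$ with $m\ge 2$, no translate $c+\frac12K$ lies in $K\cap\R^m_+$: you would need $c_i\ge\frac12$ for every $i$, so $\|c\|_2\ge\frac{\sqrt m}{2}$, and also $\|c\|_2+\frac12\le 1$. It already fails for a single symmetrized atom. For $m=2$ and $u=(1,1)/\sqrt2$, your $Z_u$ equals $\frac{1}{\sqrt2}B_1^2$, and no translate of the half-size diamond fits in the triangle $Z_u\cap\R^2_+$. (Summing over atoms, $\sum_u (Z_u\cap\R^m_+)\subset K\cap\R^m_+$, is harmless; it is the per-atom half-inclusion that breaks.)

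Nor is it clear that any zonoid inside $K\cap\R^m_+$ reaches $2^{-n}V_n(K)$. For $n=1$, $K=B_2^m$ and $m$ large, the cube $[0,m^{-1/2}]^m$ only gets about $\sqrt{2/\pi}\cdot 2^{-1}V_1(K)$. The fallback map $x\mapsto(|x_1|,\dots,|x_m|)$ is a nonlinear contraction of $K$ onto $K\cap\R^m_+$, and it gives no lower bound on intrinsic volumes.

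Your equality claim also cannot be right when $n=m$. Then $V_m(K)=|K|=2^m|K\cap\R^m_+|$ for every unconditional $K$, so the equality characterization (in the statement as written too) needs $m>n$.

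The paper never looks for a body inside $K\cap\R^m_+$. Because $V_n$ is a valuation and $K$ is $1$-unconditional, inclusion--exclusion over the coordinate half-spaces gives the exact identity
\[
2^m V_n(K\cap\R^m_+)=\sum_{j=0}^{m-n}\binom{m}{j}V_n\Bigl(K\cap\bigcap_{k=1}^{j}e_k^\perp\Bigr).
\]
The terms with $j>m-n$ vanish for dimensional reasons. The sections on the right are coordinate projections of $K$, hence zonoids, which gets around the fact that $K\cap\R^m_+$ is not a zonoid.

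The paper then iterates the Campi--Gronchi Loomis--Whitney inequality $V_n(Z)\le\frac{1}{m-n}\sum_{i=1}^m V_n(P_{e_i^\perp}Z)$ for zonoids. This gives $\binom mj V_n(K\cap\bigcap_{k\le j}e_k^\perp)\ge\binom{m-n}{j}V_n(K)$, and $\sum_j\binom{m-n}{j}=2^{m-n}$ concludes. Equality forces equality in Campi--Gronchi, i.e.\ a coordinate box, which by permutation invariance is a cube.

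Your Step~1 is in fact a way to prove that Loomis--Whitney inequality. If $F=\mathrm{span}(u_1,\dots,u_n)$, then $|P_{e_i^\perp}u_1\wedge\cdots\wedge P_{e_i^\perp}u_n|=|u_1\wedge\cdots\wedge u_n|\sqrt{1-\|P_Fe_i\|_2^2}$ and $\sum_i\|P_Fe_i\|_2^2=n$. Concavity then gives $\sum_i\sqrt{1-\|P_Fe_i\|_2^2}\ge m-n$. Keeping Step~1 and replacing Step~2 by the valuation identity should give a complete proof.
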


\subsection{Symmetric bodies}
We study inequality \eqref{eq:Rogers-Shephard-ineq-p} for the class of convex bodies with a center of symmetry (i.e., the convex bodies $K$ such that $K-a=a-K$ for some $a\in K$). We conjecture the following sharp bound.
\begin{conj}
    \label{conj:Rogers-Shephard-ineq_p-sym}
    Let $K\subset \R ^n$ be a convex body that contains the origin and has a center of symmetry, and let $p>1$. Then
  \[
  |K \oplus_p -K|\le \kappa^{(s)}_{n,p}|K|,\quad\hbox{with}\quad  \kappa^{(s)}_{n,p}=\sum_{i=0}^n\frac{\binom{n}{i}}{\binom{n/q}{i/q}}.
  \]
  Equality holds if $K=[0,1]^n$.
\end{conj}
Observe that Conjecture~\ref{conj:Rogers-Shephard-ineq_p-sym} is obvious when $p = 1$, since volume is translation invariant with respect to Minkowski summation, and the inequality is in fact equality for convex bodies symmetric about the origin. Conjecture~\ref{conj:Rogers-Shephard-ineq_p-sym} is true when $p=\infty$, thanks to the Rogers--Shephard inequality \eqref{eq:Rogers-Shephard-ineq-infty-sym}. We prove this conjecture for asymmetric $L_1$-zonoids, using the idea of shadow systems, introduced in this context by Campi and Gronchi \cite{CG-06}.
\begin{restatable}{theorem}{RGsymZonoid}
\label{thm:RG-sym-zonoid}
    Let $p \ge 1. $ If $Z$ is an asymmetric $L_1$-zonoid containing the origin, we have 
    \begin{equation} 
        \label{eq:RG-sym-zonoid}
        | Z \oplus_p -Z| \leq \kappa^{(s)}_{n,p} |Z|, \quad \text{with }\qquad \kappa^{(s)}_{n,p} :=\sum_{i=0}^n\frac{\binom{n}{i}}{\binom{n/q}{i/q}},
    \end{equation}
    where $q$ satisfies $\frac{1}{p}+\frac{1}{q}=1$. Equality holds if $Z$ is a parallelotope with a vertex at the origin.
\end{restatable}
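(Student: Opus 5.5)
Every asymmetric $L_1$-zonoid is a limit of asymmetric $L_1$-zonotopes, and both sides of \eqref{eq:RG-sym-zonoid} are continuous in the Hausdorff metric. So it suffices to treat zonotopes $Z=\sum_{i=1}^m [0,v_i]$. Such a $Z$ is a translate of the centrally symmetric zonotope $\frac12\sum_i[-v_i,v_i]$, with center $a=\frac12\sum_i v_i$, and it has the origin as a vertex.

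The plan is to use the shadow system idea of Campi and Gronchi \cite{CG-06}. Fix an index $j$ and a direction $\theta$, and move the generator $v_j$ linearly to $v_j+t\theta$, $t\in\R$. Then $Z_t=\sum_{i\ne j}[0,v_i]+[0,v_j+t\theta]$ is a parallel chord movement, and the same holds for $-Z_t$. The quantities to control are $|Z_t|$ and $|Z_t\oplus_p -Z_t|$.

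\begin{itemize}
\item The volume $|Z_t|=\sum_{|I|=n}|\det(v_i)_{i\in I}|$ is piecewise affine in $t$. It is affine on the interval of $t$ where the relevant determinants keep their sign, in particular when $\theta$ is parallel to $v_j$ or when we move in a generic direction away from degeneracy.
\item The body $Z_t\oplus_p -Z_t$ has support function $(h_{Z_t}^p+h_{Z_t}^p(-\cdot))^{1/p}$. Since $Z_t$ and $-Z_t$ are shadow systems along the same direction, their $L_p$-sum should again be contained in a shadow system along $\theta$. By Shephard/Campi--Gronchi, its volume is then a convex function of $t$. This is the step I expect to be the main obstacle: one must check that Firey $p$-sums of two parallel shadow systems (with opposite speed functions) are dominated by a shadow system. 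I would try to realize $K\oplus_p L$ as the set $\{(1-\lambda)^{1/q}x+\lambda^{1/q}y\}$, or more precisely $\bigcup_{s^q+r^q\le 1}(sK+rL)$ with $s,r\ge0$. Each piece $sZ_t - rZ_t$ moves with speed function $s\beta(x)-r\beta(y)$, so the union is the projection of a higher-dimensional shadow system, and the volume of such a union is convex in $t$ by Campi--Gronchi.
\end{itemize}

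Granting this, the ratio $|Z_t\oplus_p-Z_t|/|Z_t|$ is a convex function divided by an affine positive function on each interval, so it is quasi-convex there and is maximized at the endpoints. Degenerations where $v_j$ becomes parallel to another generator merge two segments into one, since $[0,u]+[0,su]=[0,(1+s)u]$, and this reduces $m$. The endpoint situation where $Z_t$ becomes degenerate makes the ratio blow-up harmless, because the numerator degenerates too. I would handle this with a careful choice of $\theta$, namely within the span of $v_j$ and another generator $v_k$, rotating $v_j$ toward $\pm v_k$. The value at the endpoints is then at least as large, so iterating reduces to $m=n$, where $Z$ is a parallelotope with a vertex at the origin. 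By linear invariance it is $[0,1]^n$.

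It remains to compute $|[0,1]^n\oplus_p[-1,0]^n|$. Its support function is $(\sum_i (x_i)_+ ^p$-type terms$)$, more precisely $h(x)=\big((\sum x_i^+)^p+(\sum x_i^-)^p\big)^{1/p}$. Decomposing by the orthants and using a mixed volume expansion, or directly the known formula for $L_p$-sums of complementary coordinate cubes, yields $\sum_{i}\binom{n}{i}\binom{n/q}{i/q}^{-1}$. This matches the equality case asserted in Theorem~\ref{thm:RG-sym-zonoid}. In this formula, the term $i$ comes from the region with $i$ positive coordinates, where the body looks like $[0,1]^i\oplus_p[-1,0]^{n-i}$-type products whose volumes are beta integrals. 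The equality statement then follows from this computation.
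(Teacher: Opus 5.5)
Your overall framework matches the paper's: approximate by zonotopes, build a shadow system whose volume is convex, maximize the ratio at the endpoints, and finish with the computation for $[0,1]^n$, which you describe correctly. The step you flag as the main obstacle is actually easy. Writing $Z_t=V_t([0,1]^m)$, where $V_t=V+t\,\theta\alpha^{T}$ is a rank-one perturbation of the generator matrix, gives $Z_t\oplus_p-Z_t=V_t([0,1]^m\oplus_p[-1,0]^m)$. This is the image of a fixed body, hence a shadow system along $\theta$.

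The gap is in how you control $|Z_t|$ and the endpoints. Moving a single generator $v_j\mapsto v_j+t\theta$ gives
$|Z_t|=\sum_{I\not\ni j}|\det v_I|+\sum_{I\ni j}|\det(v_j+t\theta,v_{I\setminus\{j\}})|$.
This has a kink at every $t$ where $v_j+t\theta$ crosses a hyperplane $\textup{span}(v_{I\setminus\{j\}})$. For $n\ge 3$, restricting $\theta$ to $\textup{span}(v_j,v_k)$ does not prevent these crossings: hyperplanes spanned by $n-1$ generators other than $v_k$ generally meet that plane in lines different from $\R v_k$.

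So the endpoint of the interval on which $|Z_t|$ is affine is typically a configuration where $v_j(t)$ has merely become linearly dependent on $n-1$ other generators. No segments merge, $m$ does not drop, and you give no other quantity that decreases, so nothing forces the iteration to stop at $m=n$.

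Worse, these are exactly the dependencies each step creates. If at $t=0$ the generator $v_j$ already lies in such a hyperplane not containing $\theta$, then $t=0$ is itself a kink. Near $0$ the ratio is then a convex function over a convex piecewise-affine one, which need not be quasi-convex (think of $1/(1+|t|)$). So you cannot bound $R(0)$ by the endpoint values. In the plane the kinks are exactly where $v_j(t)$ becomes parallel to another generator, which is why the idea looks convincing there.

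What is missing is the specific Campi--Gronchi movement used in the paper: $w_1(t)=(1+at)v_1$ and $w_i(t)=v_i-t\frac{\langle v_1,v_i\rangle}{\|v_1\|_2^2}v_1$ for $i\ge 2$, with $t\in[-1/a,1]$.

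- All generators move along the single direction $v_1$. Since $w_i(1)\in v_1^\perp$ for $i\ge2$, every $n\times n$ minor is simply multiplied by $1+at$ (if it involves $w_1$) or by $1-t$ (if not).
- Hence no minor changes sign on the whole interval, and $|Z_t|$ is affine there. Choosing $a=\sum_{1\notin I}|\det v_I|/\sum_{1\in I}|\det v_I|$, which is positive once $\Lambda\setminus\{v_1\}$ spans $\R^n$, makes $|Z_t|$ constant, so the ratio is convex.
- The endpoints are genuine simplifications. At $t=-1/a$ the generator $w_1$ vanishes. At $t=1$ it is orthogonal to all the others, and after doing this for $n$ generators in turn the remaining ones must vanish.

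A side remark: $Z=\sum_i[0,v_i]$ need not have the origin as a vertex; for example $[-1,1]^n=\sum_i([0,e_i]+[0,-e_i])$. You never use this, but the claim is false as stated.
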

Note that Theorem~\ref{thm:RG-sym-zonoid} implies Conjecture~\ref{conj:Rogers-Shephard-ineq_p-sym} in the planar case since the convex bodies in the plane having a center of symmetry are exactly the asymmetric $L_1$-zonoids.

The paper is organized as follows. Section~\ref{sec:pre} contains background material on asymmetric $L_p$-zonoids. In Section~\ref{subsec:equivalent}, we prove the equivalence between Conjecture~\ref{conj:Rogers-Shephard-ineq-p-zonoids} and Conjecture~\ref{conj:projection-cap-conj}. Section~\ref{sec:proj-hyp} and Section~\ref{sec:proj-line} handle the cases $n =m-1$ and $n=1$, respectively. Next, we dedicate Section~\ref{sec:p=2} to the proofs of Theorem~\ref{thm:projection-cap-conj-2} and Theorem~\ref{thm:Rogers-Shephard-ineq-2-zonoids}. Moving on to Section~\ref{sec: local maximum} and Section~\ref{sec: average}, we discuss the local maximizer and an averaged version of the inequality. We conclude the paper by studying symmetric bodies in Section~\ref{sec:sym-bodies}.

\subsection{Related literature}

The reader may consult the papers \cite{L-93,L-96,LYZ-12} for an overview of the $L_p$-Brunn--Minkowski theory, and the papers \cite{A-10,CG-06,FMMZ-24,W-13} for the developments concerning $L_p$-zonoids/zonotopes. Some authors refer to asymmetric $L_p$-zonoids as $L_p$-projection bodies instead. For example, see \cite{HS-09,HS-09-2,L-05,LYZ-20}. 

We also suggest the papers \cite{BC-08, CCG-99,CG-02,CG-06,CG-06-02} for background on shadow systems. 

Aside from the problems considered in the present paper, there is some interest in the study of the Rogers--Shephard inequality in a more general setting. For example, the papers \cite{AHRYZ-21} and  \cite{A-10, AAGJV-19, AGJV-16,C-06, MMZZ-26} focus on functional analogues.

\subsection{Funding \& Acknowledgment} 
The second-named author is supported by the Chateaubriand Fellowship of the Office for Science \& Technology of the Embassy of France in the United States, the U.S. National Science Foundation Grant DMS-2247771, and the United States-Israel Binational Science Foundation (BSF) Grant 2018115.

The third-named author is supported by the
National Science Foundation through the MSPRF program (award number: 2502794).

The authors would like to thank the laboratories LAMA, at University Gustave Eiffel, and the Institut Mathématique de Jussieu for their hospitality.

\section{Preliminaries} \label{sec:pre}
    We refer to \cite{S-93} for standard background material on convex geometry. We denote the inner product of $x,y\in \mathbb{R}^n$ by $\langle x, y \rangle$. For $p \geq 1$ and for any $x\in \mathbb{R}^n $, $\|x\|_p $ denotes the usual $\ell_p$-norm of $x$.
    Denote the $\ell_p$-ball of $\mathbb{R}^n$ by $B^n_p$. For a set $K \subset \mathbb{R}^n$, we let $|K|_m$ denote the $m$-dimensional Lebesgue measure of $K$ restricted to the affine hull of $K$, which we denote by $\textup{aff} (K)$. For convenience, we simply write $|K|$.
    % {\color{red} AM: I think we didn't use this def at all}
% \subsection{\texorpdfstring{$L_p$}{Lp}-Zonoids} \label{sec:pre-lp-zonoids}
An asymmetric $L_p$-zonotope $Z$ is defined to be the $L_p$-sum of finitely many segments containing the origin. For notational convenience, we write
\[
    \bigoplus_{i\in [m]}^{(p)} K_i :=K_1 \oplus_p \ldots \oplus_p K_m,
\]
for any convex sets $K_1,\ldots,K_m$ containing the origin, where $[m] :=\{1,\ldots,m\}$.
Since for any $a,b>0$, $p\ge1$ and $u\in \mathbb{S}^{n-1}$ one has $[-au,bu]=[0,-au]\oplus_p[0,bu]$, an asymmetric $L_p$-zonotope $Z$ can be written in the form
\[
Z= \bigoplus_{i\in [m]}^{(p)} [0,v_i],
% [0,v_1]\oplus_p\cdots\oplus_p[0,v_m],
\]
where $v_1,\dots,v_m\in\R^n$. In this case, we get 
\(
Z\oplus_p -Z= \bigoplus_{i\in [m]}^{(p)} [-v_i,v_i].
% [-v_1,v_1]\oplus_p\cdots\oplus_p[-v_m,v_m].
\)
If we introduce the vectors $u_i=v_i/\|v_i\|_2$,  then we find from the above representation that there exist numbers $c_i>0$ such that for any $x\in\R^n$ 
\[
h_{Z}(x)=\left(\sum_{i=1}^mc_i\langle x,u_i\rangle_+^p\right)^\frac{1}{p}\quad\hbox{and}\quad h_{Z\oplus_p -Z}(x)=\left(\sum_{i=1}^mc_i|\langle x,u_i\rangle|^p\right)^\frac{1}{p}, 
\]
where we define $a_+ :=\max(a,0)$ for any $a\in \mathbb{R}$.
An asymmetric $L_p$-zonoid $Z$ is the Hausdorff limit of asymmetric $L_p$-zonotopes. In terms of its support function, $Z$ is an asymmetric $L_p$-zonoid if and only if there exists a positive measure $\mu$ on $\s^{n-1}$ such that 
\[
h_{Z}(x)=\left(\int_{\s^{n-1}}\langle x,u\rangle_+^pd\mu(u)\right)^\frac{1}{p}.
\]
Notice also that the support function of $Z \oplus_p -Z$ is simply
\[
h_{Z\oplus -Z}(x)=\left(\int_{\s^{n-1}}|\langle x,u\rangle|^pd\mu(u)\right)^\frac{1}{p}.
\]
% \subsection{Polytope}

\section{Inequalities for asymmetric \texorpdfstring{$L_p$}{Lp}-zonoids} \label{sec:asymmetric-zonoid-inequality}

\subsection{The equivalence between projections and zonoids} \label{subsec:equivalent}

The following lemma justifies the equality statement in Conjecture~\ref{conj:Rogers-Shephard-ineq-p-zonoids}, establishing that the image of $B_q^n \cap \R^n_+$ under a nonsingular linear transform is an $L_p$-sum of $n$ segments with the origin as an endpoint.

\begin{lemma}\label{lem:positive_orthant_support_function}
    Let $1\leq p\leq \infty$, and let $q$ satisfy $1/p+1/q=1$. Then, if $\{e_1,\dots,e_m\}$ is the standard basis of $\mathbb{R}^m$, we have
    \begin{equation*}
        B_q^m\cap \mathbb{R}^m_+=\bigoplus_{i\in [m]}^{(p)} [0,e_i].
        % [0,e_1]\oplus_p\dots \oplus_p[0,e_m].
    \end{equation*}
    Equivalently, in the case where $p<\infty$, we have
    % \begin{equation*}
        $h_{B_q^m\cap\mathbb{R}^m_+}^p(x)=\sum_{i=1}^{m}\langle e_i,x\rangle^p_+$.
    % \end{equation*}
    % where we define $\langle e_i,x\rangle_+ :=\max\{0,\langle e_i,x\rangle\}$. 
    If we take the $L_p$-sum of the segments $[-e_i,e_i]$ instead, we have
    \begin{equation*}
    % $
        B_q^m= \bigoplus_{i\in [m]}^{(p)} [-e_i,e_i],
        % $
        % [-e_1,e_1]\oplus_p\dots\oplus_p[-e_m,e_m],
    \end{equation*}
    and 
    % \begin{equation*}
        $h_{B_q^m}^p(x)=\sum_{i=1}^{m}|\langle e_i,x\rangle|^p,$
    % \end{equation*}
    for
    $p<\infty$.
\end{lemma}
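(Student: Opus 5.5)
Since a convex body containing the origin is determined by its support function, and the $L_p$-sum is defined through support functions, it suffices to compute both sides at every $x\in\R^m$ and compare.

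\emph{The orthant body.} First, the support function of a segment $[0,e_i]$ is $h_{[0,e_i]}(x)=\max(0,x_i)=\langle e_i,x\rangle_+$. Iterating the definition \eqref{def:Firey-sums} therefore gives, for $p<\infty$,
\[
h^p_{\bigoplus^{(p)}_{i\in[m]}[0,e_i]}(x)=\sum_{i=1}^m (x_i)_+^p .
\]
For $p=\infty$ the right-hand side is replaced by $\max_i (x_i)_+$.

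Next we compute $h_{B_q^m\cap\R^m_+}(x)=\sup\{\langle x,y\rangle: y\ge 0,\ \|y\|_q\le 1\}$. For $y\ge0$ we have
\[
\langle x,y\rangle\le \sum_i (x_i)_+y_i\le \|x_+\|_p\|y\|_q\le\|x_+\|_p
\]
by H\"older's inequality, where $x_+=((x_1)_+,\dots,(x_m)_+)$. This bound is attained by the Hölder equality vector, taking $y_i=(x_i)_+^{p-1}/\|x_+\|_p^{p-1}$ when $x_+\neq0$. This $y$ is nonnegative and has $\|y\|_q=1$. If $x_+=0$, take $y=0$, and the supremum is $0$.

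The extreme exponents need a separate check. For $p=1$ ($q=\infty$), take $y_i=\mathbbm{1}_{x_i>0}$. For $p=\infty$ ($q=1$), take $y=e_j$ with $j$ an index maximizing $(x_j)_+$, or $y=0$. Hence $h_{B_q^m\cap\R^m_+}=\|x_+\|_p$ in all cases, which matches the formula above. Both sets are convex bodies containing the origin, so they coincide.

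\emph{The symmetric body.} The statement for $B_q^m$ follows in exactly the same way. Here $h_{[-e_i,e_i]}(x)=|x_i|$, so the $L_p$-sum has support function $\|x\|_p$. On the other hand, $h_{B_q^m}(x)=\|x\|_p$ by the standard $\ell_p$–$\ell_q$ duality, which is again Hölder's inequality together with its equality case, this time with signs $y_i=\operatorname{sign}(x_i)|x_i|^{p-1}/\|x\|_p^{p-1}$.

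The only step needing care is the equality case of Hölder's inequality, in particular at the endpoints $p\in\{1,\infty\}$. Nothing here is a genuine obstacle.
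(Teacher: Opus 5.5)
Your proof is correct and follows essentially the same route as the paper. Both sides are identified through their support functions, H\"older's inequality with $y\ge 0$ gives the bound $h_{B_q^m\cap\R^m_+}(x)\le\|x_+\|_p$, and the bound is attained by the same extremal vector $y_i=(x_i)_+^{p-1}/\|x_+\|_p^{p-1}$ (the paper writes it as $y_i^q=(x_i)_+^p/\sum_j (x_j)_+^p$). Your explicit check of the endpoint exponents $p\in\{1,\infty\}$, which the paper only calls straightforward, is a welcome addition.
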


\begin{proof}
    It is straightforward to prove this lemma when either $p=1$ or $p=\infty$, so we will concentrate on the case $1<p<\infty$. First, observe that
    \begin{equation*}
        h_{B_q^m\cap \mathbb{R}^m_+}(x)
        =\sup_{y\in B_q^m\cap \mathbb{R}^m_+}\langle y,x\rangle 
        =\sup_{y\in B_q^m\cap \mathbb{R}^m_+}\sum_{i=1}^{m}\langle e_i,y\rangle \langle e_i,x\rangle
        \leq \sup_{y\in B_q^m\cap \mathbb{R}^m_+} \sum_{i=1}^{m}\langle e_i,y\rangle \langle e_i,x\rangle_+.
    \end{equation*}
    By H\"older's inequality and the fact that $y\in B_q^m\cap \mathbb{R}^m_+$, we have
    \begin{equation*}
        \sum_{i=1}^{m}\langle e_i,y\rangle \langle e_i,x\rangle_+\leq \left(\sum_{i=1}^{m}\langle e_i,y\rangle^q\right)^{\frac{1}{q}}\left(\sum_{i=1}^{m}\langle e_i,x\rangle_+^p\right)^{\frac{1}{p}}\leq \left(\sum_{i=1}^{m}\langle e_i,x\rangle_+^p\right)^{\frac{1}{p}},
    \end{equation*}
    which gives
    \begin{equation*}
         h_{B_q^m\cap \mathbb{R}^m_+}^p(x)\leq \sum_{i=1}^{m}\langle e_i,x\rangle_+^p.
    \end{equation*}
    This is clearly equality when $x=0$. To show that this is equality when $x\neq 0$, define $y_x\in B_q^m\cap  \mathbb{R}^m_+$ by
    \begin{equation*}
        \langle e_j,y_x \rangle^q =\frac{\langle e_j,x \rangle_+^p}{\sum_{i=1}^{m}\langle e_i,x\rangle _+^p}
    \end{equation*}
    for $1\leq j\leq m$. Then, 
    \begin{equation*}
        \langle y_x,x \rangle=\frac{\sum_{j=1}^{m}\langle e_j,x \rangle _+^p}{\left(\sum_{i=1}^{m}\langle e_i,x\rangle^p_+\right)^{\frac{1}{q}}} = \left(\sum_{i=1}^{m}\langle e_i,x\rangle_+^p\right)^{\frac{1}{p}},
    \end{equation*}
    so we must have $h_{B_q^m\cap \mathbb{R}^m_+}(x)=\langle y_x,x\rangle$, proving the required formula. The case where we take the $L_p$-sum of segments $[-e_i,e_i]$ is handled in a nearly identical way, so we omit the proof.
\end{proof}

\begin{lemma} \label{lem:equ-lp-ball}
    Let $1\leq p \leq \infty$, and let $q$ satisfy $\frac{1}{p} + \frac{1}{q} = 1.$ If $Z$ is contained in a hyperplane or is an $L_p$-sum of $n$-segments with the origin as an endpoint, then $|Z \oplus_p -Z| = 2^n|Z|$.
\end{lemma}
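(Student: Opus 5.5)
The statement has two cases, and I would treat them separately. In both, the key point is that the $L_p$-sum commutes with linear maps.

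\emph{Case 1: $Z$ lies in a hyperplane.} Then $Z\oplus_p -Z$ lies in the same hyperplane. Indeed, if $Z\subset u^\perp$ then $h_Z(\pm u)=0$ and $h_{-Z}(\pm u)=0$, so $h_{Z\oplus_p -Z}(\pm u)=0$, which forces $Z\oplus_p-Z\subset u^\perp$. Hence both sides vanish as $n$-dimensional volumes, and the equality $0=2^n\cdot 0$ holds trivially.

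\emph{Case 2: $Z=[0,v_1]\oplus_p\cdots\oplus_p[0,v_n]$.} If the $v_i$ are linearly dependent, $Z$ lies in a hyperplane and we are back in Case 1. Otherwise let $T$ be the linear map with $Te_i=v_i$, which is nonsingular. Linear maps commute with $L_p$-sums: $h_{TK}(x)=h_K(T^{t}x)$, so
\[
h_{TK\oplus_p TL}(x)^p=h_K(T^tx)^p+h_L(T^tx)^p=h_{T(K\oplus_p L)}(x)^p,
\]
with the obvious modification when $p=\infty$. By Lemma~\ref{lem:positive_orthant_support_function}, this gives $Z=T(B_q^n\cap\R^n_+)$. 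Since $-[0,v_i]=[0,-v_i]$, the support function identity gives $-Z=[0,-v_1]\oplus_p\cdots\oplus_p[0,-v_n]$. Using associativity and commutativity of $\oplus_p$, together with $[-v_i,v_i]=[0,-v_i]\oplus_p[0,v_i]$ (as in the preliminaries), we get $Z\oplus_p -Z=\bigoplus^{(p)}_{i\in[n]}[-v_i,v_i]=T B_q^n$, again by Lemma~\ref{lem:positive_orthant_support_function}.

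It then remains to compare volumes. We have $|TB_q^n|=|\det T|\,|B_q^n|$ and $|T(B_q^n\cap\R^n_+)|=|\det T|\,|B_q^n\cap\R^n_+|$. By unconditionality, $B_q^n$ is the union of its $2^n$ orthant pieces, which are reflections of $B_q^n\cap\R^n_+$ and overlap only in measure zero. Hence $|B_q^n|=2^n|B_q^n\cap\R^n_+|$, and the equality $|Z\oplus_p -Z|=2^n|Z|$ follows.

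The main point needing care is justifying $[-v,v]=[0,-v]\oplus_p[0,v]$ and the general rearrangement of $\oplus_p$-sums. Both follow at once from the support function definition: $h_{[0,v]}(x)=\langle x,v\rangle_+$, and $\langle x,v\rangle_+^p+\langle x,-v\rangle_+^p=|\langle x,v\rangle|^p$. Associativity and commutativity are immediate from the formula $h^p=\sum h_i^p$.
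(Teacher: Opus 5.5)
Your proof is correct and follows essentially the same route as the paper: the hyperplane case is trivial, and otherwise a nonsingular linear map carries $B_q^n\cap\R^n_+=\bigoplus^{(p)}_{i\in[n]}[0,e_i]$ and $B_q^n=\bigoplus^{(p)}_{i\in[n]}[-e_i,e_i]$ to $Z$ and $Z\oplus_p -Z$, since $\oplus_p$ commutes with linear maps. Two points the paper leaves implicit are filled in by your argument: reducing linearly dependent segments to the hyperplane case, and justifying $|B_q^n|=2^n|B_q^n\cap\R^n_+|$ by the orthant decomposition.
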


\begin{proof}
    First, if $Z$ is contained in a hyperplane, then the inequality is trivial. Indeed, $Z\oplus_p Z$ is also contained in the same hyperplane and hence $|Z \oplus_p -Z| = 0 = |Z| $.

    Now, if $Z$ is the $L_p$-sum of $n$ segments with the origin as an endpoint, then there exists a nonsingular linear transform $T:\mathbb{R}^{n}\rightarrow\mathbb{R}^{n}$ such that 
    % $TZ=B_q^{n}\cap\mathbb{R}^{n}_+$. From Lemma~\ref{lem:positive_orthant_support_function}, 
    % \begin{equation*}
        $TZ=\bigoplus_{i \in [n]}^{(p)} [0,e_i]$.
    % \end{equation*}
    Thus,
    \begin{equation*}
        TZ\oplus_p-TZ=\bigoplus_{i \in [n]}^{(p)}[-e_i,e_i]=B_q^{n}.
    \end{equation*}
    It follows immediately that 
    \begin{equation*}
        |TZ\oplus_p -TZ|=2^n|TZ|.
    \end{equation*}
    Using the fact that $TZ\oplus_p-TZ=T(Z\oplus_p-Z)$, we have
    \begin{equation*}
        |Z\oplus_p-Z|=\frac{1}{|\textup{det}(T)|}|TZ\oplus_p-TZ|=\frac{2^n}{|\textup{det}(T)|}|TZ|=2^n|Z|,
    \end{equation*}
    which is exactly what we need.
\end{proof}
We recall that it is possible to extend the definition of the $L_p$-sum \eqref{def:Firey-sums} to non-convex sets, as proved by Lutwak, Yang, and Zhang in \cite{LYZ-12}. For $p>1$ and $K,L\subset\mathbb{R}^n$, define 
    \begin{equation}
        \label{def:Firey-sums-non-convex}
        K \oplus_p L:=\left\{(1-t)^{1 / q} x+t^{1 / q} y : x \in K, y \in L, t \in[0,1]\right\},
    \end{equation}
    where $q$ is the conjugate of $p$, that is $\frac{1}{q}+\frac{1}{p}=1$. The definition \eqref{def:Firey-sums-non-convex} coincides with \eqref{def:Firey-sums} when  $K,L$ are convex bodies containing the origin, see \cite[Lemma 2]{LYZ-12}. This equivalent definition allows us to prove a form of uniqueness of the $L_p$ representation of a nonsingular linear image of $B_q^n\cap\R^n_+$ as an $L_p$-sum of segments.
\begin{lemma} \label{lem:uniqueness-lq}
    Let $1 < p < \infty$, and let $w_1,\ldots, w_n \in \R^n$ be independent vectors. If there exist $v_1,\ldots,v_m \in\R^n $ with $ m \geq n$ such that 
    \begin{equation} 
        \bigoplus_{j\in [n]}^{(p)} [0,w_j] 
        % \oplus_p \ldots \oplus_p [0,w_n] 
        = \bigoplus_{i\in [m]}^{(p)}[0,v_i],
        % \oplus_p \ldots \oplus_p [0,v_m],    
    \end{equation}
    then there is a partition $U_0, \ldots,U_n$ of $[m]$ such that for any $1 \leq j \leq n$
    \[
        U_j := \{ i  \in [m] : \alpha_{ij} w_j =  v_i \text{ for some }  \alpha_{ij} > 0 \},
    \]
    with $ \sum_{i \in U_j} \alpha_{ij}^{p} = 1 $ and $U_0 = \{ i \in [m]: v_i = 0\} $.
\end{lemma}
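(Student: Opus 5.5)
Apply a linear change of variables to reduce to a coordinate situation, then compare support functions. Let $T$ be the nonsingular linear map with $Tw_j=e_j$. Since $T\bigl(\bigoplus^{(p)}[0,v_i]\bigr)=\bigoplus^{(p)}[0,Tv_i]$, Lemma~\ref{lem:positive_orthant_support_function} turns the hypothesis into
\[
\sum_{j=1}^n \langle e_j,x\rangle_+^p=\sum_{i=1}^m \langle u_i,x\rangle_+^p \quad\text{for all } x\in\R^n,
\]
where $u_i:=Tv_i$. It then suffices to show the following: each nonzero $u_i$ is a positive multiple $\alpha_{ij}e_j$ of a single basis vector, and for each $j$ one has $\sum_{i\in U_j}\alpha_{ij}^p=1$ with $U_j\neq\emptyset$. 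The conclusion for the $v_i$ follows by applying $T^{-1}$, and $U_0$ collects the indices with $u_i=0$.

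\textbf{Step 1: localize the support of each $u_i$.} I would use the positive orthant. Take any $x$ with all coordinates $\le 0$. The left side is then $0$, so every term $\langle u_i,x\rangle_+$ vanishes. Hence $\langle u_i,x\rangle\le 0$ for all $x\in-\R^n_+$, which means $u_i\in\R^n_+$.

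Next, for $j\neq k$, take $x=e_j-e_k$. The left side equals $1$. On the right, $\langle u_i,x\rangle=u_{ij}-u_{ik}$. This alone does not force disjoint supports, so I would use a finer test. Take $x=te_j-se_k$ with $t,s>0$. The left side is $t^p$, independent of $s$. The right side is $\sum_i (tu_{ij}-su_{ik})_+^p$, and each summand is nonincreasing in $s$. Taking $s\to\infty$ forces the right side to the sum over $\{i: u_{ik}=0\}$ of $t^pu_{ij}^p$. Comparing with $s=0$, where the right side is $\sum_i t^pu_{ij}^p$, we get $\sum_{i:u_{ik}>0}u_{ij}^p=0$. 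So any $u_i$ with $u_{ik}>0$ has $u_{ij}=0$ for all $j\neq k$. Therefore each nonzero $u_i$ equals $\alpha_i e_{j(i)}$ with $\alpha_i>0$.

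\textbf{Step 2: identify the weights.} Plug in $x=e_j$. The left side is $1$ and the right side is $\sum_{i:j(i)=j}\alpha_i^p$. This gives $\sum_{i\in U_j}\alpha_{ij}^p=1$, which in particular shows $U_j\neq\emptyset$. The sets $U_0,\dots,U_n$ form a partition of $[m]$ because each $u_i$ is either zero or a multiple of exactly one $e_j$.

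The main obstacle is Step 1, specifically showing that no $u_i$ has two positive coordinates. Naive test vectors such as $e_j-e_k$ are not enough, and the argument only works through the monotonicity-in-$s$ device, which uses $p<\infty$ so that the $p$-th powers are additive. For $p=1$ the statement fails, since a segment can be split into pieces along other directions; this is why the argument must genuinely use the identity of support functions raised to the $p$-th power.
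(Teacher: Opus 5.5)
Your proof is correct, and it takes a different route from the paper's. Both proofs start the same way: normalize to $w_j=e_j$ and observe that every $v_i$ lies in $\R^n_+$. After that the paper argues pointwise. It uses the Lutwak--Yang--Zhang description $K\oplus_pL=\{(1-t)^{1/q}x+t^{1/q}y\}$ to write $e_j=\sum_i t_i^{1/q}\lambda_i v_i$. Nonnegativity of the coordinates then discards every $v_i$ not parallel to $e_j$, which gives $[0,e_j]\subset\bigoplus^{(p)}_{i\in U_j}[0,v_i]$. A squeeze of support functions, $h^p_{B_q^n\cap\R^n_+}\le h^p_{\mathcal V}+h^p_{[0,v_k]}\le h^p_{B_q^n\cap\R^n_+}$, shows that every leftover $v_k$ vanishes. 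Projecting onto the axes then yields $\sum_{i\in U_j}\alpha_{ij}^p=1$.

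You instead work only with the identity $\sum_j\langle e_j,x\rangle_+^p=\sum_i\langle u_i,x\rangle_+^p$. You read off everything from the test directions $x\in-\R^n_+$, $x=te_j-se_k$ and $x=e_j$. This is shorter and more elementary. It avoids the non-convex $L_p$-sum and the containment-plus-squeeze step, and the weights come from a single evaluation. What the paper's route buys in exchange is a geometric picture of which summands actually produce each vertex $e_j$.

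Two side remarks in your write-up are inaccurate, though neither affects the proof.

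\textbf{The $s\to\infty$ device is unnecessary.} The test vector $e_j-e_k$ is already enough once you combine it with $x=e_j$. Since $x=e_j$ gives $\sum_i u_{ij}^p=1$, the identity at $x=e_j-e_k$ reads $\sum_i(u_{ij}-u_{ik})_+^p=\sum_i u_{ij}^p$. Termwise $0\le(u_{ij}-u_{ik})_+\le u_{ij}$, so every term must agree, which forces $u_{ij}u_{ik}=0$. No limit in $s$ is needed.

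\textbf{The statement does not fail at $p=1$.} Your argument never uses $p>1$, so it proves the statement for $p=1$ as well. Geometrically, a segment that is a Minkowski summand of a parallelotope must be parallel to one of its edges. The genuine breakdown is at $p=\infty$. There $p$-th powers of support functions are no longer additive, and indeed $\conv(0,e_1,e_2)=\conv(0,e_1,e_2,\tfrac12(e_1+e_2))$ gives a counterexample.
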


\begin{proof}
    By applying a linear transform, it is enough to prove the result when $w_j = e_j$ for all $j \in [n]$. For any $j \in [n]$, we denote
    \[
    U_j := \{ i  \in [m] : \alpha_{ij} e_j =  v_i \text{ for some }  \alpha_{ij} > 0 \}.
    \]
    We show that $U_0,\ldots, U_n$ form a partition of $[m]$. By definition, $U_j \subset [m]$ for every $0\leq j \leq n $, and the sets $U_0,\ldots, U_n$ are pairwise-disjoint. It remains to show that they cover $[m]$.
    
    Using Lemma~\ref{lem:positive_orthant_support_function}, we get that $v_i \in \bigoplus_{j \in [n]}^{(p)}[0,e_j] =B_q^n\cap\R^n_+$ for all $i \in [m]$.
    Let $j \in [n]$. Since $e_j \in \bigoplus_{i \in [m]}^{(p)} [0,v_i] $, it follows from \eqref{def:Firey-sums-non-convex} that
    \begin{equation}
        \label{eq:LYZ-ej}
        e_j = \sum_{i=1}^m t_i^{1/q}\lambda_i v_i,
    \end{equation}
    for some $t_i,\lambda_i \in [0,1]$ with $\sum_{i=1}^m t_i
    =1$, where $q$ is the H\"older conjugate of $p$. 
    Thus, for any $i \not\in U_j \cup U_0$,  $t_i = 0$ or $\lambda_i =0$.
    %there exist $\alpha_{ij}$ such that $\alpha_{ij} e_j =  v_i$, 
    Hence,
    \[
    e_j = \sum_{i \in U_j} t_i^{1/q}\lambda_i v_i =  \sum_{i \in U_j} \left(\frac{t_i}{\sum_{i \in U_j}t_i}\right)^{1/q}\left(\sum_{i \in U_j}t_i\right)^{1/q}\lambda_i v_i \in \bigoplus_{i \in U_j}^{(p)}[0,v_i],
    \]
    where in the last step, we use that $\left(\sum_{i \in U_j}t_i\right)^{1/q}\lambda_i v_i \in [0,v_i]$ and \eqref{def:Firey-sums-non-convex}.
    This step is justified since $\sum_{i\in U_j}t_i\neq 0$. Indeed, if $\sum_{i\in U_j}t_i=0$, then, we would have $t_i=0$ for all $i\in U_j$, which gives a contradiction since $e_j$ is not the origin. Since $\bigoplus_{i \in U_j}^{(p)}[0,v_i]$ is a convex set containing the origin, we get that $[0,e_j] \subset  \bigoplus_{i \in U_j}^{(p)}[0,v_i]$ and so
    \begin{equation} 
    \label{eq:reduced-v}
        \bigoplus_{j\in [n]}^{(p)}[0,e_j] = \bigoplus_{j\in [n]}^{(p)} \bigoplus_{i \in U_j}^{(p)} [0,v_i]=:\mathcal{V}.
    \end{equation}
    For any $ k \in [m]$ such that $ k \not \in \bigcup_{1\leq j \leq n} U_j$, we get
    \[
     h_{B_q^n\cap \R^n_+}^p  \leq h_{\mathcal{V}}^p + h_{[0,v_{k}]}^p  \leq  h_{B_q^n\cap \R^n_+}^p ,
    \]
    where the first inequality follows from \eqref{eq:reduced-v}, and the second is from the assumption $ \bigoplus_{i \in [m]}^{(p)}[0,v_i] = B_q^n\cap \R^n_+$.
    Therefore, $h_{[0,v_{k}]}^p  \equiv 0$, and so $v_k = 0$. Then, $k \in U_0$, verifying that $U_0,\ldots,U_n$ is a partition of $[m]$.
    % Thus,
    % \[
    %     \bigoplus_{j\in [n]}[0,e_j] = \bigoplus_{j\in [n]} \bigoplus_{i \in U_j} [0,v_i].
    % \]
    Finally, we observe that for any $k \in [n]$,
    \[
        [0,e_k]=P_{\R e_k} \left( \bigoplus_{j\in [n]}^{(p)}[0,e_j]\right) 
        = P_{\R e_k} \left(\bigoplus_{j\in [n]}^{(p)} \bigoplus_{i \in U_j}^{(p)} [0,v_i] \right)= \bigoplus_{i \in U_k}^{(p)} [0,v_i].
    \]
    Thus, $h_{[0,e_k]}^p = \sum_{i\in U_k} h_{\alpha_{ik}[0,e_k]}^p = \sum_{i \in U_k} \alpha_{ik}^{p} h_{[0,e_k]}^p$ and so $\sum_{i \in U_k} \alpha_{ik}^{p} = 1$.
\end{proof}

\begin{lemma} \label{lem:sets-projection}
    Let $E$ be an $n$-dimensional subspace of $\R^m$ and 
    let 
    \[
        U_0 = \{ i \in [m]: P_Ee_i =0\} \quad \text{and} \quad u_ i = \frac{P_E e_i}{\|P_E e_i\|_2}, \quad i \in [m] \setminus U_0.
    \]
    Suppose that $[m] \setminus U_0$ can be partitioned into $n$ nonempty sets $U_1,\ldots,U_n$ such that 
    \[
        u_i = u_k \quad \text{if and only if} \quad i,k \text{ belong to the same } U_j,1 \leq j\leq n.
    \]
    Then there exist unit vectors $w_j \in \sum_{i \in U_j} \R_{>0} e_i$ such that $E = \textup{span}(w_1,\dots,w_n)$.
    
    % $U_0, \ldots,U_n$ be a partition of $[m]$ such that
    % Let $v_1,\ldots,v_n \in \R^m$ be the basis of an $n$-dimensional subspace $E$ of $\R^m$, and let
    % $e_1,\ldots, e_m$ be the standard basis of $\R^m$. If there exists a partition $U_0, \ldots, U_n$ of $[m]$ such that $U_0 = \{ i \in [m]: P_Ee_i =0\}$ and for any $1\leq j \leq n$,
    % \[
    %     U_j = \{i \in [m] : \alpha_{ij} v_j = P_E e_i \text{ for some } \alpha_{ij} >0\} \neq \emptyset,
    % \]
    % then there exist unit vectors $w_j \in \sum_{i \in U_j} \R_{\{>0\}} e_i$ with $E = \textup{span}(w_1,\dots,w_n)$.
\end{lemma}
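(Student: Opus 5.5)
The plan is to build the $w_j$ directly from the projections of the coordinate vectors. For each $1\le j\le n$, every $i\in U_j$ has the same direction $u_i$. Call this common unit vector $u^{(j)}\in E$. So $P_E e_i=\|P_E e_i\|_2\,u^{(j)}$ with $\|P_Ee_i\|_2>0$ for $i\in U_j$. The hypothesis also says the $u^{(j)}$ are pairwise distinct for different $j$.

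The candidate I will take is
\[
\tilde w_j:=\sum_{i\in U_j}\langle e_i,u^{(j)}\rangle e_i,\qquad w_j:=\tilde w_j/\|\tilde w_j\|_2 .
\]
Since $u^{(j)}\in E$, we have $\langle e_i,u^{(j)}\rangle=\langle P_Ee_i,u^{(j)}\rangle=\|P_Ee_i\|_2>0$ for every $i\in U_j$. Hence $\tilde w_j\in\sum_{i\in U_j}\R_{>0}e_i$, and it is nonzero because $U_j\neq\emptyset$.

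The first real step is to show $\tilde w_j\in E$. Write the expansion
\[
u^{(j)}=\sum_{i=1}^m\langle e_i,u^{(j)}\rangle e_i .
\]
The terms with $i\in U_0$ vanish, since $\langle e_i,u^{(j)}\rangle=\langle P_Ee_i,u^{(j)}\rangle=0$ there. For $i\in U_k$ with $k\neq j$, we get $\langle e_i,u^{(j)}\rangle=\|P_Ee_i\|_2\langle u^{(k)},u^{(j)}\rangle$. So the whole point is that the distinct directions $u^{(k)}$ must be mutually orthogonal.

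This orthogonality is the main obstacle, and my approach is a rank count. The vectors $P_Ee_1,\dots,P_Ee_m$ span $E$, because $P_E$ maps $\R^m$ onto $E$. They lie on the $n$ lines $\R u^{(1)},\dots,\R u^{(n)}$. Since $\dim E=n$, the $u^{(j)}$ form a basis of $E$.

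Next I use the identity $\sum_{i=1}^m P_Ee_i\otimes P_Ee_i=P_E$, which follows from $\sum_i e_i\otimes e_i=I$ and $P_E^2=P_E=P_E^{*}$. Grouping terms gives $P_E=\sum_{k=1}^n c_k\,u^{(k)}\otimes u^{(k)}$ with $c_k=\sum_{i\in U_k}\|P_Ee_i\|_2^2>0$. Applying this to the dual basis vector $u^{(j)*}$ yields $u^{(j)*}=c_j u^{(j)}$, since $P_E$ is the identity on $E$. Comparing inner products then gives $c_j\langle u^{(j)},u^{(k)}\rangle=\delta_{jk}$, so the $u^{(j)}$ are orthogonal, and moreover $c_j=1$.

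With orthogonality in hand, the expansion collapses to $u^{(j)}=\sum_{i\in U_j}\langle e_i,u^{(j)}\rangle e_i=\tilde w_j$. Thus $w_j=u^{(j)}$ lies in $E$, is a unit vector, and belongs to $\sum_{i\in U_j}\R_{>0}e_i$. The vectors $w_1,\dots,w_n$ form an orthonormal basis of $E$, so $E=\textup{span}(w_1,\dots,w_n)$, which is the claim.
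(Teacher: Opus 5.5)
Your proof is correct, and it takes a genuinely different route from the paper's.

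The paper works in $E^\perp$. After relabeling so that $j\in U_j$, it notes that the $m-n$ vectors $e_j/\sqrt{c_j}-e_i/\sqrt{c_i}$ (for $i\in U_j\setminus\{j\}$), together with $e_i$ for $i\in U_0$, lie in $E^\perp$. These vectors are linearly independent, so by a dimension count they span $E^\perp$. It then checks directly that $w_j=\sum_{i\in U_j}\sqrt{c_i}\,e_i$ is orthogonal to all of them, hence lies in $E$, and disjoint supports give a basis.

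You instead stay inside $E$. You combine the resolution of the identity $P_E=\sum_k c_k\,u^{(k)}\otimes u^{(k)}$ (the same identity the paper uses in its $q=2$ Brascamp--Lieb argument) with the dual-basis trick, which forces the block directions $u^{(k)}$ to be orthonormal. Your unnormalized vector $\tilde w_j=\sum_{i\in U_j}\|P_Ee_i\|_2\,e_i$ is in fact exactly the paper's $w_j$.

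What each route buys:
\begin{itemize}
\item The paper's argument is a bare dimension count and needs no operator identity. However, it never checks that its $w_j$ is a unit vector, as the statement requires; it tacitly leaves a (harmless) normalization.
\item Your argument costs one extra identity but gives more structure for free. You get $\sum_{i\in U_j}\|P_Ee_i\|_2^2=1$, so $w_j=u^{(j)}$ is already a unit vector. You also get that the common directions of the blocks are mutually orthogonal, so $(w_1,\dots,w_n)$ is an orthonormal basis of $E$ coinciding with those directions.
\item Your proof also avoids the relabeling step.
\end{itemize}
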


\begin{proof}
    For any $i \not\in U_0$, let $c_i = \|P_E e_i\|_2^2$. 
    Since $U_1,\ldots,U_n$ are nonempty, we can assume that $j \in U_j$ for any $1\leq j \leq n$, relabeling the indices if necessary. Thus, for $1 \leq j \leq n $, and for any $i\in U_j$, we get that $u_i =u_j$. Hence,
    \[
        U_j =\{i \in [m]: u_i = u_j\}.
    \]
    % Now, let us see what this condition gives for the space $E$. 
    % Since for any $1\le j\le n$, $u_{j}=P_Ee_{j}/\sqrt{c_{j}}$, the fact that $u_i=u_{j}$, for all $i\in U_{j}$ implies that 
    For any $1 \leq j \leq n$, and for any $i \in U_j \setminus \{j\}$, we have
    \[
        P_E\left(\frac{e_j}{\sqrt{c_j}}-\frac{e_i}{\sqrt{c_i}}\right)=\frac{P_E e_j}{\sqrt{c_j}}-\frac{P_E e_i}{\sqrt{c_i}}=u_j-u_i=0.
    \]
    Thus, the vectors $\frac{e_{j}}{\sqrt{c_{j}}}-\frac{e_i}{\sqrt{c_i}}\in E^{\bot}. $ Also, it follows from the definition of $U_0$ that for any $i \in U_0$, $e_i \in E^\perp$.
    We claim that
    \[
    E^\bot=\textup{span}\left( \left\{\frac{e_{j}}{\sqrt{c_{j}}}-\frac{e_i}{\sqrt{c_i}} :\ i\in U_{j}\setminus\{j\}, 1\le j\le n\right\}\cup\{ e_i: i \in U_0 \} \right).
    \]
    Note that with $j$ fixed, the vectors $\frac{e_{j}}{\sqrt{c_{j}}}-\frac{e_i}{\sqrt{c_i}}$ 
    % \[
    %     \frac{e_{j}}{\sqrt{c_{j}}}-\frac{e_i}{\sqrt{c_i}}
    % \]
    are linearly independent since each vector contains an $e_i$ which does not appear in the others. Since $U_0,\ldots,U_n$ are pairwise disjoint, all vectors are linearly independent. The total number of vectors is
    \[
        \sum_{j=1}^n\left(\#U_j-1\right)+\#U_0=\left(\sum_{j=1}^n\#U_j\right)-n+\#U_0=m-n ,
    \]
    where $\# U$ denotes the cardinality of the set $U$.
    Since the dimension of $E^\perp $ is $m-n$, the claim follows. Now, for $1\le j\le n$, we define
    \[
    w_j=\sum_{i\in U_{j}}\sqrt{c_i}e_i.
    \]
    A direct computation shows that $w_k\in (E^\bot)^\bot=E$. Finally, the vectors $w_k$, $1\le k\le n$ are linearly independent since they have pairwise disjoint supports and each of them is nonzero. Hence, $E=\textup{span}(w_k: 1\le k\le n).$
\end{proof}

\begin{lemma} \label{lem:equ-projection}
    Let $E$ be an $n$-dimensional subspace of $\R^m$. Suppose there exists a partition  $U_0, U_1, \dots, U_n $ of $[m]$ such that $U_j \neq \emptyset$ for every $ 1 \leq j \leq n$, $U_0 = \{ i\in [m] :P_E e_i = 0 \}$, and 
    for every $1\le j\le n$, there exist unit vectors $w_j\in\sum_{i\in U_j}\R_{>0} e_i$ such that $E=\textup{span}(w_1,\dots,w_n)$. Then, for any $1 \leq q \leq \infty$,
    $|P_E B_q^m| = 2^n |P_E(B_q^m \cap \R^m_+)|$.
\end{lemma}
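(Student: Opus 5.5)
We work directly with support functions on $E$, or equivalently with the $L_p$-sum description of projections, and reduce everything to Lemma~\ref{lem:equ-lp-ball}. The key observation is that projection commutes with $L_p$-sums: since $h_{P_E K}(x)=h_K(x)$ for $x\in E$, we get $P_E\bigl(\bigoplus^{(p)}_i K_i\bigr)=\bigoplus^{(p)}_i P_E K_i$. By Lemma~\ref{lem:positive_orthant_support_function} this gives
\[
P_E(B_q^m\cap\R^m_+)=\bigoplus_{i\in[m]}^{(p)}[0,P_Ee_i],\qquad P_EB_q^m=\bigoplus_{i\in[m]}^{(p)}[-P_Ee_i,P_Ee_i],
\]
so that, writing $Z:=P_E(B_q^m\cap\R^m_+)\subset E$, we have $P_EB_q^m=Z\oplus_p -Z$. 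Here $p$ is the conjugate of $q$, and the cases $p=1$ and $p=\infty$ are included in the lemma.

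We then use the hypothesis to identify $Z$ as an $L_p$-sum of $n$ segments. The indices in $U_0$ contribute $[0,0]$ and can be dropped. For $i\in U_j$ with $j\ge1$, we need to show that $P_Ee_i$ is a positive multiple of a fixed vector $f_j\in E$. Write $w_j=\sum_{i\in U_j}a_ie_i$ with $a_i>0$. Since $E=\textup{span}(w_1,\dots,w_n)$ and the $w_j$ are orthonormal (they have disjoint supports), we get
\[
P_Ee_i=\sum_k\langle e_i,w_k\rangle w_k=a_iw_j \quad\text{for } i\in U_j.
\]
Hence $[0,P_Ee_i]=[0,a_iw_j]$, and for $p<\infty$,
\[
\bigoplus^{(p)}_{i\in U_j}[0,a_iw_j]=\Bigl[0,\bigl(\textstyle\sum_{i\in U_j}a_i^p\bigr)^{1/p}w_j\Bigr],
\]
because the support functions satisfy $\sum_i a_i^p\langle x,w_j\rangle_+^p$. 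For $p=\infty$ the sum is $[0,\max_i a_i\, w_j]$. In both cases $Z=\bigoplus^{(p)}_{j\in[n]}[0,b_jw_j]$ with $b_j>0$, and the $w_j$ form a basis of $E$.

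Identifying $E$ with $\R^n$, Lemma~\ref{lem:equ-lp-ball} then gives $|Z\oplus_p-Z|=2^n|Z|$, that is, $|P_EB_q^m|=2^n|P_E(B_q^m\cap\R^m_+)|$.

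The main step to watch is the commutation of projection with $L_p$-sums together with the endpoint cases $q=1$ and $q=\infty$. These are handled uniformly at the level of support functions, restricted to $E$, since Lemma~\ref{lem:positive_orthant_support_function} covers $1\le p\le\infty$.
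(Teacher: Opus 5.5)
Your proposal is correct and is essentially the paper's proof: you use the disjoint supports of the $w_j$ to get $P_Ee_i=a_iw_j$ for $i\in U_j$, collapse each block $U_j$ into the single segment $[0,(\sum_{i\in U_j}a_i^p)^{1/p}w_j]$, and thereby reduce to an $L_p$-sum of $n$ linearly independent one-sided segments. The only cosmetic differences are that you cite Lemma~\ref{lem:equ-lp-ball} for the final identity $|Z\oplus_p -Z|=2^n|Z|$, whereas the paper redoes that volume computation inline using the orthonormal basis $(w_j)$, and that you treat the endpoint case $p=\infty$ explicitly.
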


\begin{proof}
    Notice that the vectors $(w_1,\dots,w_n)$ form an orthonormal basis of $E$. 
% By renormalizing the vectors $w_j$, we can assume, without loss of generality, that they form an orthonormal basis of $E$. 
There exists $\lambda_i>0$ for $i \in [m] \setminus U_0$ such that  
\[
w_j=\sum_{i\in U_j}\lambda_ie_i.
% \quad\hbox{and}\quad \sum_{i\in U_j}\lambda_i^2=1.
\]
Then, for any $1\le j\le n$ and for any $i\in U_j$ one has $\langle e_i,w_k\rangle=0$ for $k\neq j$ and $\langle e_i,w_j\rangle=\lambda_i$. Hence $P_E(e_i)=\lambda_iw_j$. Therefore, for any $1\le j\le n$, 
%if we denote $n_j=\textup{card}(U_j)$ and $U_j=\{i_{j,i_1},\dots,i_{j,n_j}\}$\[P_E\left(([0,e_{j,i_1}]\oplus_2\cdots\oplus_2[0,e_{i_{j,n_j}}]\right)\]
\[
P_E\left(\bigoplus^{(p)}_{i\in U_j}[0,e_i]\right)=\bigoplus^{(p)}_{i\in U_j}[0,P_Ee_i]=\bigoplus^{(p)}_{i\in U_j}[0,\lambda_iw_j]=\left(\sum_{i\in U_j} \lambda_i^p\right)^\frac{1}{p}[0,w_j].
\]
Taking the $L_p$-sum and using Lemma~\ref{lem:positive_orthant_support_function}, we conclude that 
\[
    P_E(B_q^m\cap \mathbb{R}^m_+) = P_E\left(\bigoplus^{(p)}_{i\in [m]}[0,e_i]\right) = \bigoplus^{(p)}_{j\in [n]}\left(\sum_{i\in U_j} \lambda_i^p\right)^\frac{1}{p}[0,w_j] .
\]
Similarly, we have
\[
    P_EB_q^m = P_E\left(\bigoplus^{(p)}_{i\in [m]}[-e_i,e_i]\right) = \bigoplus^{(p)}_{j\in [n]}\left(\sum_{i\in U_j} \lambda_i^p\right)^\frac{1}{p}[-w_j,w_j].
\]
Since $(w_1,\dots, w_n)$ is an orthonormal basis of $E$,  
\begin{align}
&|P_E(B_q^m\cap \mathbb{R}^m_+)|_n= \prod_{j\in[n]} \left(\sum_{i\in U_j} \lambda_i^p\right)^\frac{1}{p} \left|\bigoplus^{(p)}_{i\in [n]}[0,w_i]\right|_n
= \prod_{j\in[n]} \left(\sum_{i\in U_j} \lambda_i^p\right)^\frac{1}{p}\left|\bigoplus^{(p)}_{i\in [n]}[0,b_i]\right|
\\
&=\prod_{j\in[n]} \left(\sum_{i\in U_j} \lambda_i^p\right)^\frac{1}{p}\frac{1}{2^n}\,\left|\bigoplus^{(p)}_{i\in [n]}[-b_i,b_i]\right|
=\prod_{j\in[n]} \left(\sum_{i\in U_j} \lambda_i^p\right)^\frac{1}{p}\frac{1}{2^n}\,\left|\bigoplus^{(p)}_{i\in [n]}[-w_i,w_i]\right|_n
=\frac{1}{2^n}\,|P_E B_q^m|_n,
\end{align}
where we denote $\{b_1,\ldots,b_n\}$ for the standard basis of $\R^n$
\end{proof}

\begin{theorem}
    \label{thm: equivalent-RS-Proj}
    Restricted to the class of asymmetric $L_p$-zonotopes, Conjecture~\ref{conj:Rogers-Shephard-ineq-p-zonoids} 
      is equivalent to Conjecture~\ref{conj:projection-cap-conj}. More precisely, for fixed $1 < p< \infty$ and for $q$ satisfying $\frac{1}{p} +\frac{1}{q} = 1$, the following statements are equivalent:
    \begin{enumerate}
        \item[(i)] If $Z\subset \R^n$ is an asymmetric $L_p$-zonotope, then
        $$
            |Z \oplus_p -Z| \leq 2^n|Z|.
        $$
        Equality holds if and only if $Z$ is contained in a hyperplane or is an $L_p$-sum of $n$-segments with the origin as an endpoint.
        \item[(ii)] If $ m\geq n$, and if $E$ is an $n$-dimensional subspace of $\R^m$, then 
        $$
         |P_E B^m_q|_n\le 2^n   |P_E(B_q^m \cap \R^m_+)|_n .
        $$ 
        Equality holds if and only if there exists a partition  $U_0, U_1, \dots, U_n $ of $[m]$ such that $U_j \neq \emptyset$ for every $ 1 \leq j \leq n$, $U_0 = \{ i\in [m] :P_E e_i = 0 \}$, and 
    % there is a partition of $\{1,\dots,m\}$ into $n$ non empty subsets $U_1,\dots,U_n$, and 
    for every $1\le j\le n$, there exist unit vectors $w_j\in\sum_{i\in U_j}\R_{>0} e_i$ such that $E=\textup{span}(w_1,\dots,w_n)$.
    \end{enumerate}
    % Specifically, if $Z$ is a full-dimensional asymmetric $L_p$-zonotope that is the $L_p$-sum of $m\geq n$ segments that each contain the origin, then there exists a subspace $E\subset \mathbb{R}^m$ of dimension $n$, and a positive constant $C$ that depends on $Z$ such that 
    % \begin{align*}
    %     |P_EB_q^m|_n=C|Z\oplus_p-Z|\qquad \textup{and}\qquad |P_E(B_q^m\cap \mathbb{R}^m_+)|_n=C|Z|.
    % \end{align*}
    % Moreover, $Z$ is the image of $B_q^n\cap \mathbb{R}^n_+$ under a non-singular linear transform if and only if there exist non-empty, pairwise-disjoint sets $U_1,\dots, U_n \subset \{1,\dots,m\}$ and vectors $w_j\in \sum_{i\in U_j}\mathbb{R}_+e_i$ for $1\leq j\leq n$ such that $E=\textup{span}(w_1,\dots,w_n)$. {\color{purple} MM: Need to prove the equality condition part} 
\end{theorem}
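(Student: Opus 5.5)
The plan is to build a dictionary between asymmetric $L_p$-zonotopes in $\R^n$ and pairs $(m,E)$, with $E$ an $n$-dimensional subspace of $\R^m$. The dictionary should carry $Z$ and $Z\oplus_p -Z$ to $P_E(B_q^m\cap\R^m_+)$ and $P_EB_q^m$, up to the same linear isomorphism. Once this is in place, both the inequality and the equality cases transfer through it.

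\textbf{Step 1 (dictionary).} Let $Z=\bigoplus^{(p)}_{i\in[m]}[0,v_i]$ with $v_i\in\R^n$. If $Z$ lies in a hyperplane, both sides of (i) vanish. So assume $Z$ is full-dimensional; then the $v_i$ span $\R^n$ and $m\ge n$. Let $A:\R^m\to\R^n$ be the linear map with $Ae_i=v_i$. It is surjective, and linear maps commute with $L_p$-sums of bodies containing the origin, since $h_{AK}(x)=h_K(A^*x)$. Together with Lemma~\ref{lem:positive_orthant_support_function} this gives
\[
Z=A(B_q^m\cap\R^m_+),\qquad Z\oplus_p-Z=A(B_q^m).
\]
Set $E=(\ker A)^\perp$, which has dimension $n$. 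Then $A=A|_E\circ P_E$ with $A|_E:E\to\R^n$ an isomorphism, so
\[
|Z|=|\det A|_E|\,|P_E(B_q^m\cap\R^m_+)|_n,\qquad |Z\oplus_p-Z|=|\det A|_E|\,|P_EB_q^m|_n.
\]
Conversely, given $m\ge n$ and $E$, choose an isometry $T:E\to\R^n$ and put $v_i=TP_Ee_i$. This produces an asymmetric $L_p$-zonotope $Z$ with the same identities and $|\det|=1$. Hence the two inequalities are equivalent, and equality in one corresponds exactly to equality in the other.

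\textbf{Step 2 (equality cases).} For (i)$\Rightarrow$(ii), suppose equality holds in (ii). The associated $Z$ is full-dimensional, so by (i) it is an $L_p$-sum of $n$ segments $[0,w_j]$ with the $w_j$ independent. We also have $Z=\bigoplus^{(p)}_{i\in[m]}[0,TP_Ee_i]$. Lemma~\ref{lem:uniqueness-lq} then partitions $[m]$ into $U_0,\dots,U_n$, where $U_0=\{i:P_Ee_i=0\}$ and each $P_Ee_i$ with $i\in U_j$ is a positive multiple of $T^{-1}w_j$. Each $U_j$ is nonempty, because otherwise $Z$ would be lower-dimensional. Moreover, the normalized vectors $u_i$ coincide exactly within a block, since the $w_j$ are independent. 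Lemma~\ref{lem:sets-projection} then supplies the required unit vectors $w_j\in\sum_{i\in U_j}\R_{>0}e_i$ spanning $E$. The reverse implication of the equality statement in (ii) is Lemma~\ref{lem:equ-projection}.

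For (ii)$\Rightarrow$(i), suppose equality holds in (i) with $Z$ full-dimensional. Step~1 gives equality in (ii) for $E=(\ker A)^\perp$, so (ii) yields the partition and orthonormal vectors $w_j$. As in the proof of Lemma~\ref{lem:equ-projection}, $P_E(B_q^m\cap\R^m_+)=\bigoplus^{(p)}_{j\in[n]}c_j[0,w_j]$ with $c_j>0$. Applying $A|_E$ shows that $Z$ is an $L_p$-sum of $n$ segments with the origin as an endpoint. The reverse direction in (i) is Lemma~\ref{lem:equ-lp-ball}.

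\textbf{Main obstacle.} The delicate step is the equality transfer in (i)$\Rightarrow$(ii). There we must check that the decomposition given by Lemma~\ref{lem:uniqueness-lq} matches the hypotheses of Lemma~\ref{lem:sets-projection}: the sets $U_0$ must agree, each $U_j$ must be nonempty, and the "if and only if" condition on the $u_i$ must hold. The rest is bookkeeping with the linear isomorphism $A|_E$.
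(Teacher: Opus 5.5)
Your proposal is correct and follows essentially the same route as the paper. Both directions of the inequality go through the dictionary $Z=A(B_q^m\cap\R^m_+)$, $Z\oplus_p-Z=A(B_q^m)$, and the equality cases transfer via Lemmas~\ref{lem:uniqueness-lq}, \ref{lem:sets-projection}, \ref{lem:equ-projection} and \ref{lem:equ-lp-ball}. The differences are cosmetic: your factorization $A=A|_E\circ P_E$ with $E=(\ker A)^\perp$ replaces the paper's singular value decomposition, which yields the same subspace (the row space of $A$). You also get $U_j\neq\emptyset$ from full-dimensionality rather than from $\sum_{i\in U_j}\alpha_{ij}^p=1$.
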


\begin{proof}
    We begin with the implication $(ii) \Rightarrow (i)$. Assume that $(ii)$ holds. 
    % By approximation, it is enough to consider the case where $Z$ is a full-dimensional asymmetric $L_p$-zonotope. That is, $Z$ is the $L_p$-sum of finitely many nondegenerate segments containing the origin. 
    We write $$Z=\bigoplus_{i\in [m]}^{(p)}[0,u_i],$$ where $u_1,\dots,u_m\in\mathbb{R}^n$ are distinct nonzero vectors. It is enough to consider the case when $Z$ is full-dimensional. Thus, we have $\textup{span}(u_1,\dots,u_m)=\mathbb{R}^n$, so that $m\geq n$.
    Let $\{e_1,\dots,e_m\}$ denote the canonical basis for $\mathbb{R}^m$, and define the linear operator $U:\mathbb{R}^m\rightarrow\mathbb{R}^n$ by $Ue_i=u_i$, so that $U$ is an $n\times m$ matrix whose columns are $u_1,\dots,u_m$. Using Lemma~\ref{lem:positive_orthant_support_function}, we have that
    \begin{align*}
        Z=\bigoplus_{i\in [m]}^{(p)}[0,Ue_i]=U\left(\bigoplus_{i\in [m]}^{(p)}[0,e_i]\right)=U(B_q^m\cap \mathbb{R}^m_+),
    \end{align*}
    and that 
    \begin{align*}
        Z\oplus-Z=\bigoplus_{i\in [m]}^{(p)}[-Ue_i,Ue_i]=U\left(\bigoplus_{i\in [m]}^{(p)}[-e_i,e_i]\right)=U(B_q^m),
    \end{align*}
    so that 
    \begin{align*}
        Z=U(B_q^m\cap \mathbb{R}^m_+)\qquad \textup{and}\qquad Z\oplus_p-Z=U(B_q^m).
    \end{align*}
    %{\color{red}MF: the ten preceding lines could be simplified a bit if we would notice before that for any linear $T:\R^m\to\R^n$ and any sets $A,B$ in $\R^m$ one has $T(A\oplus_pB)=T(A)\oplus_p T(B)$. Using this, we directly have 
    %\begin{equation*}
    %    Z\oplus_p-Z=[-U(e_1),U(e_1)]\oplus_p\dots\oplus_p [-U(e_m),U(e_m)]=U([-%e_1,e_1]\oplus_p\dots\oplus_p [-e_m,e_m])=U(B_q^m).
    %\end{equation*}}
    The matrix $U$ has a singular value decomposition $U=R_1\Sigma R_2$, where $R_1$ is an $n\times n$ orthogonal matrix, $\Sigma$ is an $n\times m$ square diagonal matrix with nonnegative diagonal entries $\lambda_1,\dots,\lambda_n$, and $R_2$ is an $m\times m$ orthogonal matrix. The fact that $U$ has rank $n$ forces the values $\lambda_i$ to be positive. Let $\{w_1,\dots,w_n\}$ denote the canonical basis for $\mathbb{R}^n$, and define the linear operator $T:\mathbb{R}^n\rightarrow\mathbb{R}^m$ by $T(R_1w_i)=\lambda_i^{-1}R_2^{-1}e_i$.
    % , so that $T$ is an $m\times n$ matrix whose $i$th column is $\lambda_i^{-1}R_2^{-1}e_i$. 
    For $x\in \mathbb{R}^m$, we compute
    \begin{align*}
        TR_1\Sigma R_2x&=\sum_{i=1}^{m}\langle R_2x,e_i\rangle TR_1\Sigma e_i
        =\sum_{i=1}^n\langle x,R_2^{-1}e_i\rangle \lambda_i TR_1w_i
        \\
        &=\sum_{i=1}^n\langle x,R_2^{-1}e_i\rangle R_2^{-1}e_i=P_Ex,
    \end{align*}
    where $E = \textup{span} (R^{-1}_2e_1,\ldots, R^{-1}_2e_n)$, so we have that $TR_1\Sigma R_2=P_E$. 
    Since $m\geq n$, the volume of the image under $T$ of a convex body $K\subset \mathbb{R}^n$ changes according to the formula
    \begin{equation*}
        |TK|_n=\sqrt{\textup{det}(T^{*}T)}\cdot |K|_n.
    \end{equation*}
    Then we find that the $n$-dimensional volume of $P_E B_q^{m}$ is given by
    \begin{equation}\label{eq:volume_lq_ball_projection}
        |P_EB_q^{m}|_n=|TR_1\Sigma R_2B_q^{m}|_n=\sqrt{\textup{det}(T^{*}T)}\cdot|UB_q^{m}|_n=\sqrt{\textup{det}(T^{*}T)}\cdot |Z\oplus_p-Z|.
    \end{equation}
    We used the fact that the volume is invariant under the rotation $R_1$. Similarly, we find that 
    \begin{equation}\label{eq:volume_orthant_lq_ball_projection}
        |P_E(B_q^{m}\cap \mathbb{R}^m_+)|_n=\sqrt{\textup{det}(T^{*}T)}\cdot |U(B_q^m\cap \mathbb{R}^m_+)|_n=\sqrt{\textup{det}(T^{*}T)}\cdot |Z|.
    \end{equation}
    From \eqref{eq:volume_lq_ball_projection}, \eqref{eq:volume_orthant_lq_ball_projection}, and the inequality in $(ii)$, we have
    \begin{equation}
        \label{eq:equality-check-ii-i}
        |Z\oplus_p-Z| = \frac{1}{\sqrt{\textup{det}(T^{*}T)}}\cdot |P_EB_q^m|_n \leq \frac{1}{\sqrt{\textup{det}(T^{*}T)}}\cdot 2^n|P_E(B_q^{m}\cap \mathbb{R}^m_+)|_n = 
        2^n |Z|
        % 2^n |Z|=\frac{1}{\sqrt{\textup{det}(T^{*}T)}}\cdot 2^n|P_E(B_q^{m}\cap \mathbb{R}^m_+)|_n\geq \frac{1}{\sqrt{\textup{det}(T^{*}T)}}\cdot |P_EB_q^m|_n=|Z\oplus_p-Z|,
    \end{equation}
    which establishes the inequality in $(i)$. 
    
    Now, we focus on the equality characterization in $(i)$. 
    % Equality holds if and only if $Z$ is contained in a hyperplane or it is an $L_p$-sums of $n$-segments with the origin as an endpoint. 
    The sufficient condition follows from Lemma~\ref{lem:equ-lp-ball}. Then we will prove the other direction is true, assuming that $Z$ is full-dimensional and $|Z \oplus_p -Z | = 2^n |Z|$. Thus, it follows from  \eqref{eq:equality-check-ii-i} that
    \[
        |P_EB_q^m|_n = 2^n|P_E(B_q^{m}\cap \mathbb{R}^m_+)|_n.
    \]
    Using the equality case in $(ii)$, there exists a partition  $U_0, U_1, \dots, U_n $ of $[m]$ such that $U_j \neq \emptyset$ for every $ 1 \leq j \leq n$, $U_0 = \{ i\in [m] :P_E e_i = 0 \}$, and 
    for every $1\le j\le n$, there exist unit vectors $w_j\in\sum_{i\in U_j}\R_{>0} e_i$ such that $E=\textup{span}(w_1,\dots,w_n)$. Let $\lambda_1,\ldots, \lambda_m \geq 0$ be such that for every $ 1 \leq j \leq n$,
    \[
        w_j = \sum_{i \in U_j} \lambda_i e_i.
    \]
    % We follows the same computation in Lemma~\ref{lem:equ-projection} to obtain 
    Observe that
    \[
        P_E \left( \bigoplus_{i \in U_j}^{(p)} [0,e_i]\right) 
        = \bigoplus_{i \in U_j}^{(p)} [0,P_E e_i] =\bigoplus_{i \in U_j}^{(p)} \left[0,\sum_{j=1}^n \langle w_j,e_i \rangle w_j\right] .
    \]
    Since $U_1,\ldots,U_n$ are pairwise-disjoint, we obtain that
    \begin{equation} \label{eq:projection-001}
        P_E \left( \bigoplus_{i \in U_j}^{(p)} [0,e_i]\right)  
        % =\bigoplus_{i \in U_j}^{(p)} \left[0,\lambda_i w_j\right] 
        = \left(\sum_{i\in U_j} \lambda_i^p\right)^{\frac{1}{p}} [0,w_j],
    \end{equation}
    where the last equality follows from a direct computation using the definition of $L_p$-summation.
    % \[
    %     P_E (B_q^m \cap \R^m_+)
    %     = \bigoplus^{(p)}_{j \in [n]} \left(\sum_{i\in U_j} \lambda_i^p\right)^{\frac{1}{p}} [0,w_j]
    % \]
    Using Lemma~\ref{lem:positive_orthant_support_function}, we get that
    \[
        TZ = P_E (B_q^m \cap \R^m_+) = 
        % P_E \left( \bigoplus_{i \in [m]}^{(p)} [0,e_i]\right) = 
        % \bigoplus^{(p)}_{j \in [n]} P_E \left( \bigoplus_{i \in U_j}^{(p)} [0,e_i]\right) = 
        \bigoplus^{(p)}_{j \in [n]} \left(\sum_{i\in U_j} \lambda_i^p\right)^{\frac{1}{p}} [0,w_j].
    \]
    Therefore, $Z$ is image of $B_q^n\cap \R^n_+$.
    % $Z= \bigoplus_{j\in[n]}^{(p)} \left(\sum_{i\in U_j}\lambda_i^p\right)^{\frac{1}{p}} [0,T^* w_j]$.

    % Moreover, if the subspace $E$ is a coordinate subspace, then the orthogonal matrix $R_2$ is a permutation on the standard basis $\{e_1,\dots,e_m\}$. It follows that $u_i=Ue_i$ is nonzero for exactly $n$ vectors $e_i$. But, the segments $[0,u_i]$ are assumed to be nondegenerate, so the only choice we have is that $m=n$. Then $Z$ is the $L_p$-sum of exactly $n$ segments $[0,u_i]$ that span $\mathbb{R}^n$, which by Lemma~\ref{lem:positive_orthant_support_function} is equivalent to saying that $Z$ is the image of the positive orthant of the $l_q$ ball $B_q^n$ under a nonsingular linear transform.
    
    Now, we prove the other direction $(i) \Rightarrow (ii)$. Suppose that statement $(i)$ holds. 
    % Let $ m \geq n \geq 1$. Let $E$ be an $n$-dimensional subspace of $\R^m.$ 
    % Let $w_1,\ldots,w_n$ be an orthonormal basis of $E$.
    % an $m\times m$ orthogonal matrix $R_2$ such that 
    % \begin{equation*}
    %     E=\textup{span}(w_1,\dots,w_n).
    % \end{equation*}
    Using Lemma~\ref{lem:positive_orthant_support_function}, we get that
    \begin{align}
        P_E (B_q^m \cap \R^m_+) = P_E \left( \bigoplus_{i \in [m]}^{(p)} [0,e_i] \right) = \bigoplus_{i \in [m]}^{(p)} [0,P_E e_i].
    \end{align}
    Similarly, we have $P_E B_q^m = \bigoplus_{i \in [m]}^{(p)} [-P_Ee_i , P_E e_i]$.
    Let 
    $
        Z = \bigoplus_{i \in [m]}^{(p)} [0,P_E e_i]. 
        % = \bigoplus_{i \in [m]}^{(p)} [0,R_2P_E e_i] \subset \R^n \times \{0\} .
    $
    Note that $Z$ is an asymmetric $L_p$-zonotope in the subspace $E$. Using the inequality in $(i)$, we obtain
    \begin{equation} \label{eq:project-pf}
       |P_E B_q^m|_n = |Z\oplus_p -Z|_n \leq 2^n |Z|_n = 2^n |P_E (B_q^m \cap \R^m_+)|.
    \end{equation}
    Now, we justify the equality conditions. Note that the verification of the sufficient condition follows from Lemma~\ref{lem:equ-projection}. Assume that $|P_EB_q^m|_n = 2^n|P_E(B_q^{m}\cap \mathbb{R}^m_+)|_n.$
    Using the equality condition in $(i)$, 
    % \begin{equation}
    %     \label{eq:projection-eq-case}
    %     |P_EB_q^m|_n = 2^n|P_E(B_q^m\cap \R^m_+)|_n
    % \end{equation}
    we obtain that 
    % $Z$ is an asymmetric $L_p$-zonotope generated by an $L_p$-sum of $n$ one-sided segments, that is,
    $
        {Z} = \bigoplus_{i\in [n]}^{(p)}[0,v_i]
    $
    for linearly independent vectors $v_1,\ldots,v_n$ in $E$. 
    % is linear image of $B_q^n\cap \R^n_+$ on $\R^n$, that is, there exists a linear map $\Tilde{L}$ on $\R^n$ such that
    % \[
    %     \Tilde{Z}_p = \Tilde{L} (B_q^n \cap \R^n_+).
    % \]
    Thus,
    $$
        \bigoplus_{i \in [m]}^{(p)} [0,P_E e_i]  =  \bigoplus_{j \in [n]}^{(p)}[0,v_j].
    $$
    Using Lemma~\ref{lem:uniqueness-lq}, there exists a partition $U_0 = \{i:P_Ee_i = 0\},U_1,\ldots,U_n$ of $[m]$ such that for any $1 \leq j \leq n$,
    \[
        U_j=\left\{i \in[m]: \alpha_{i j}  v_j= P_Ee_i \text { for some } \alpha_{i j} > 0\right\},
    \]
    with $\sum_{i \in U_j} \alpha_{ij}^p = 1$. For any $ i \not \in U_0$, we denote $u_i = P_Ee_i /\|P_Ee_i\|_2$ and $c_i = \|P_E e_i\|_2^2$. It follows from  $\sum_{i \in U_j} \alpha_{ij}^p = 1$ that $U_1,\ldots,U_n$ are non-empty sets. Without loss of generality, for $1\leq j \leq n$, let $j \in U_j$ and write $U_j $ as follows:
    \[
        U_j =\{i \in [m]: u_i = u_j\}.
    \]
    The proof is complete using Lemma~\ref{lem:sets-projection}. \qedhere
\end{proof}
From equations \eqref{eq:volume_lq_ball_projection} and \eqref{eq:volume_orthant_lq_ball_projection}
in the preceding proof, we deduce the following corollary. 
\begin{corollary} \label{Cor:equality-zp-lq-vol}
    Let $1\leq p \leq \infty$. If $Z$ is a full-dimensional asymmetric $L_p$-zonotope that is the $L_p$-sum of $m\geq n$ segments that each contain the origin, then there exists a subspace $E\subset \mathbb{R}^m$ of dimension $n$, and a positive constant $C$ that depends on $Z$ such that 
    \begin{align*}
        |P_EB_q^m|_n=C|Z\oplus_p-Z|\qquad \textup{and}\qquad |P_E(B_q^m\cap \mathbb{R}^m_+)|_n=C|Z|.
    \end{align*}
\end{corollary}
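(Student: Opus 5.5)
The corollary can be read off from the first half of the proof of Theorem~\ref{thm: equivalent-RS-Proj}, the implication $(ii)\Rightarrow(i)$. That construction uses only linear algebra and Lemma~\ref{lem:positive_orthant_support_function}, never the inequality in $(ii)$, so I will repeat it for any $1\le p\le\infty$.

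\textbf{Step 1: realize $Z$ and $Z\oplus_p -Z$ as linear images.} Write $Z=\bigoplus_{i\in[m]}^{(p)}[0,u_i]$ with $u_1,\dots,u_m$ spanning $\R^n$; this is possible since $Z$ is full-dimensional. Let $U:\R^m\to\R^n$ be the linear map with $Ue_i=u_i$. Linear maps commute with $L_p$-sums, because $h_{UK}(x)=h_K(U^*x)$. Hence Lemma~\ref{lem:positive_orthant_support_function} gives
\[
Z=U(B_q^m\cap\R^m_+)\quad\text{and}\quad Z\oplus_p-Z=U(B_q^m).
\]
For $p=\infty$ the same identities hold with convex hulls in place of $L_p$-sums, and Lemma~\ref{lem:positive_orthant_support_function} covers that case as well.

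\textbf{Step 2: factor $U$ through an orthogonal projection.} Take a singular value decomposition $U=R_1\Sigma R_2$. Since $U$ has rank $n$, the singular values $\lambda_1,\dots,\lambda_n$ are positive. Set $E=\textup{span}(R_2^{-1}e_1,\dots,R_2^{-1}e_n)$ and define $T$ by $T(R_1w_i)=\lambda_i^{-1}R_2^{-1}e_i$. The computation already done in the proof of the theorem gives $TU=P_E$. The map $T$ is an injective linear map from $\R^n$ onto $E$, so it scales $n$-dimensional volume by the constant factor $\sqrt{\det(T^*T)}=\prod_i\lambda_i^{-1}$.

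\textbf{Step 3: conclude.} Applying $T$ to the identities of Step 1 gives exactly \eqref{eq:volume_lq_ball_projection} and \eqref{eq:volume_orthant_lq_ball_projection}. The corollary therefore holds with $C=\sqrt{\det(T^*T)}>0$, which depends only on $Z$ through the vectors $u_i$.

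There is no real obstacle here. The only point needing care is that $TU=P_E$ and the volume scaling factor do not depend on $p$, so the argument is valid for every $1\le p\le\infty$, including the endpoint cases.
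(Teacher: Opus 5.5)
Your proposal is correct and follows the paper's own argument. The paper obtains the corollary directly from the identities \eqref{eq:volume_lq_ball_projection} and \eqref{eq:volume_orthant_lq_ball_projection} in the proof of Theorem~\ref{thm: equivalent-RS-Proj}; these rely only on the factorization $TU=P_E$ and Lemma~\ref{lem:positive_orthant_support_function}, not on the inequality in $(ii)$. You also check that the construction works at the endpoints $p=1$ and $p=\infty$, which the paper leaves implicit since the theorem itself is stated only for $1<p<\infty$.
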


\begin{remark}
    Theorem~\ref{thm: equivalent-RS-Proj} also shows, by a standard approximation argument, that Conjecture~\ref{conj:projection-cap-conj} implies the inequality in Conjecture~\ref{conj:Rogers-Shephard-ineq-p-zonoids} for asymmetric $L_p$-zonoids. However, it is not clear whether the equality characterization also extends to this infinite-dimensional setting.
\end{remark}

Next, we present a particular case of Conjecture~\ref{conj:Rogers-Shephard-ineq-p-zonoids} when $Z$ is the $L_p$-sum of $n$ segments, and we characterize the conditions for equality.
% , and it will help us to characterize the equality conditions.
\begin{prop} \label{prop: RS-n-seg}
    Let $1< p \leq\infty$. For any asymmetric $L_p$-zonotope $Z$ in $\R^n$ which is the $L_p$-sum of $n$ segments containing the origin, we have 
    \[
        |Z \oplus_p -Z | \leq 2^n |Z|,
    \]
    where equality holds if and only if $Z$ is contained in a hyperplane or is an $L_p$-sum of $n$-segments with the origin as an endpoint.
\end{prop}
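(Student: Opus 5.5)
The plan is to reduce to a computation in coordinates, using the fact that with only $n$ segments everything decouples along a basis. Write the segments as $[-a_iu_i,b_iu_i]$ with $u_i\in\s^{n-1}$ and $a_i,b_i\ge0$, so that $Z=\bigoplus_{i\in[n]}^{(p)}[-a_iu_i,b_iu_i]$.

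\textbf{Step 1: the support function of $Z\oplus_p -Z$.} Put $t=\langle x,u_i\rangle$. Then
\[
h_{[-a_iu_i,b_iu_i]}(x)=b_it_++a_it_- \quad\text{and}\quad h_{-[-a_iu_i,b_iu_i]}(x)=a_it_++b_it_- .
\]
Since at most one of $t_+,t_-$ is nonzero, the sum of the $p$-th powers is $(a_i^p+b_i^p)|t|^p$. Setting $c_i=(a_i^p+b_i^p)^{1/p}$, this gives
\[
Z\oplus_p-Z=\bigoplus_{i\in[n]}^{(p)}[-c_iu_i,c_iu_i].
\]
For $p=\infty$ the same holds with $c_i=\max(a_i,b_i)$.

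\textbf{Step 2: the degenerate case.} Suppose some $c_i=0$ or the $u_i$ are linearly dependent. Then $Z$ and $Z\oplus_p-Z$ lie in a common hyperplane, and both volumes vanish. This gives equality, and it is exactly the "contained in a hyperplane" alternative.

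\textbf{Step 3: normalization.} Otherwise, let $T$ be the nonsingular linear map with $T(c_iu_i)=e_i$. Linear maps commute with $L_p$-sums, so Lemma~\ref{lem:positive_orthant_support_function} gives $T(Z\oplus_p-Z)=B_q^n$. Also $TZ=\bigoplus^{(p)}_{i}[-\alpha_ie_i,\beta_ie_i]$ with $\alpha_i=a_i/c_i$ and $\beta_i=b_i/c_i$, so $\alpha_i^p+\beta_i^p=1$. The determinant of $T$ cancels in the ratio, so it suffices to show $|TZ|\ge 2^{-n}|B_q^n|$.

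\textbf{Step 4: identifying $TZ$ (the main step).} We have
\[
h_{TZ}^p(x)=\sum_i\gamma_i(\operatorname{sign}x_i)^p|x_i|^p, \quad\text{where } \gamma_i(+)=\beta_i,\ \gamma_i(-)=\alpha_i .
\]
I claim that for each sign pattern $\varepsilon\in\{-1,1\}^n$,
\[
TZ\cap\{\varepsilon_iy_i\ge0\}=D_\varepsilon(B_q^n\cap\R^n_+), \quad\text{with } D_\varepsilon=\mathrm{diag}\bigl(\varepsilon_i\gamma_i(\varepsilon_i)\bigr).
\]
Equivalently, $TZ=\{y:\sum_i|y_i|^q/\gamma_i(\operatorname{sign}y_i)^q\le1\}$, with the convention that coordinates with $\gamma_i=0$ must vanish.

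I would prove this exactly as in Lemma~\ref{lem:positive_orthant_support_function}:
\begin{enumerate}
\item[(a)] For the inclusion of this set into $TZ$: for $y$ in the set, Hölder's inequality gives $\langle x,y\rangle\le\sum_i\gamma_i(\operatorname{sign} y_i)|y_i|\,|x_i|\cdot\mathbf 1_{\{x_iy_i>0\}}\le h_{TZ}(x)$.
\item[(b)] For the reverse inclusion: the set is convex, because it is the intersection over $\varepsilon$ of the convex sets $\{\sum_i (\varepsilon_iy_i)_+^q/\gamma_i(\varepsilon_i)^q\le1\}$. The explicit Hölder maximizer $y_x$ attains $h_{TZ}(x)$, so $h$ of the set is at least $h_{TZ}$.
\end{enumerate}

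\textbf{Step 5: the volume bound.} Summing over the $2^n$ orthants,
\[
|TZ|=\sum_\varepsilon\prod_i\gamma_i(\varepsilon_i)\,|B_q^n\cap\R^n_+|=\prod_i(\alpha_i+\beta_i)\,2^{-n}|B_q^n|.
\]
Since $p\ge1$ and $\alpha_i^p+\beta_i^p=1$, the inequality $\|\cdot\|_1\ge\|\cdot\|_p$ gives $\alpha_i+\beta_i\ge1$. This proves the inequality.

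\textbf{Step 6: equality.} For $1<p<\infty$, the inequality $\alpha_i+\beta_i\ge(\alpha_i^p+\beta_i^p)^{1/p}$ is strict unless $\alpha_i\beta_i=0$. For $p=\infty$ we have $\max(\alpha_i,\beta_i)=1$, so $\alpha_i+\beta_i=1$ again forces $\alpha_i\beta_i=0$. Hence equality holds if and only if every segment has the origin as an endpoint. The converse direction is also covered by Lemma~\ref{lem:equ-lp-ball}. Combined with Step 2, this gives the stated characterization.

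The only real work is the orthant description of $TZ$ in Step 4. Everything else is bookkeeping.
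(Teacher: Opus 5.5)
Your proof is correct and follows essentially the paper's route. You reduce to segments along a basis, identify $Z\oplus_p-Z$ as a linear image of $B_q^n$, compute $|Z|$ orthant by orthant to compare $\prod_i(a_i+b_i)$ with $\prod_i\|(a_i,b_i)\|_p$, and read off the equality case from $a_ib_i=0$. The only difference is that you prove the orthant description of $TZ$ directly by the H\"older argument of Lemma~\ref{lem:positive_orthant_support_function}, whereas the paper invokes the decomposition \eqref{eq:decomp} for locally anti-blocking bodies. One small slip: in Step~4(a) the middle term should be $\sum_i |x_i||y_i|\mathbf{1}_{\{x_iy_i>0\}}$, to which H\"older is applied with the factors $\gamma_i(\operatorname{sign}y_i)|x_i|$ and $|y_i|/\gamma_i(\operatorname{sign}y_i)$; the extra factor $\gamma_i\le 1$ you wrote makes the first inequality false as stated.
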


\begin{proof}
    If $Z$ is not full-dimensional, then the result is trivial, since $|Z| = |Z \oplus_p -Z| = 0$. Thus, we assume that $Z$ is full-dimensional.
    Let $\{e_1,\dots,e_n\}$ denote the canonical basis for $\mathbb{R}^n$. By applying a linear transform, it is enough to prove the result when 
    $
        Z = \bigoplus_{i\in [n]}^{(p)} [-a_ie_i,b_ie_i]
    $
    for some $a_i,b_i \geq 0$. Since 
    $$
        [-a_ie_i,b_ie_i] \oplus_p [-b_ie_i,a_ie_i] = [-\|(a_i,b_i)\|_pe_i,\|(a_i,b_i)\|_pe_i],
    $$
    we have
    $$
        Z \oplus_p -Z = \bigoplus_{i\in [n]}^{(p)} [-\|(a_i,b_i)\|_pe_i,\|(a_i,b_i)\|_pe_i].
    $$
    It follows from Lemma~\ref{lem:positive_orthant_support_function} that $|Z \oplus_p -Z | = \prod_{i\in [n]} \|(a_i,b_i)\|_p |B_q^n|$ where $q$ satisfies $\frac{1}{p} + \frac{1}{q} = 1$. Now, we compute the volume of $Z$. Recall the decomposition proved in \cite[proof of Lemma 23]{MNZ-25}: for any locally anti-blocking convex bodies $K$ and $L$, one has
    \begin{equation}
        \label{eq:decomp}
        |K\oplus_p L| = \sum_{I \subset [n]} |K_I \oplus_p L_I|,
    \end{equation}
    where $K_I = \{x \in K: x_i \geq 0 \text{ for all } i \in I \text{ and } x_i \leq 0 \text{ for all } i \not\in I\}$. Here, the locally anti-blocking convex body is a convex body for which projections onto and intersections with the coordinate subspaces are identical. Using \eqref{eq:decomp} and Lemma~\ref{lem:positive_orthant_support_function}, we obtain
    \begin{align}
        |Z| &= \sum_{I \subset [n]} \left(\prod_{i \in I} a_i \prod_{j \not\in I} b_j |B_q^n\cap\R^n_+|\right)
        = \left(\prod_{i=1}^n (a_i+b_i)\right) |B_q^n\cap\R^n_+|
        \\
        &\geq \left(\prod_{i=1}^n \|(a_i,b_i)\|_p\right) |B_q^n\cap\R^n_+| = \left(\prod_{i=1}^n \|(a_i,b_i)\|_p\right) \frac{|B_q^n|}{2^n} = \frac{|Z \oplus_p -Z |}{2^n}. \label{eq: RS-n-seg}
    \end{align}
    
    Now, we characterize the equality condition. Using Lemma~\ref{lem:equ-lp-ball}, it is enough to prove only the necessary direction for full-dimensional $Z$.
    % First, if $Z$ is the linear image of $B_q^n\cap\R^n_+$, then there exists a non-singular linear transform $T:\mathbb{R}^{n}\rightarrow\mathbb{R}^{n}$ such that $TZ=B_q^{n}\cap\mathbb{R}^{n}_+$. From Lemma~\ref{lem:positive_orthant_support_function}, we have
    % % \begin{equation*}
    %     $TZ=\bigoplus_{i \in [n]}^{(p)} [0,e_i],$
    % % \end{equation*}
    % and that 
    % \begin{equation*}
    %     TZ\oplus_p-TZ=\bigoplus_{i \in [n]}^{(p)}[-e_i,e_i]=B_q^{n}.
    % \end{equation*}
    % It follows immediately that 
    % \begin{equation*}
    %     |TZ\oplus_p -TZ|=2^n|TZ|.
    % \end{equation*}
    % Using the fact that $TZ\oplus_p-TZ=T(Z\oplus_p-Z)$, we have
    % \begin{equation*}
    %     |Z\oplus_p-Z|=\frac{1}{|\textup{det}(T)|}|TZ\oplus_p-TZ|=\frac{2^n}{|\textup{det}(T)|}|TZ|=2^n|Z|,
    % \end{equation*}
    % which is exactly what we need.
    % Conversely, a
    Assume that $|Z \oplus_p -Z | = 2^n |Z|$. Thus, we have an equality in \eqref{eq: RS-n-seg}: for any $i \in [n]$, $\|(a_i,b_i)\|_1 = \|(a_i,b_i)\|_p$. Therefore, for every $i\in [n]$, $a_i = 0$ or $b_i = 0$. Moreover, since $Z$ is full-dimensional, $a_i$ and $b_i$ can not both be zero. Hence, for each $i\in [n]$, exactly one of $a_i$ and $b_i$ is nonzero, which completes the proof.
\end{proof}

    \subsection{The case $p=2$} \label{sec:p=2}
In the first part of this section, we will prove Conjecture~\ref{conj:projection-cap-conj} using the reverse Brascamp--Lieb inequality proved by Barthe \cite{B-98, B-04,BKX-23}, and using the equality characterization from the paper of Boroczky, Kalantzopoulos and Xi \cite{BKX-23}. We first recall the definition of independent subspaces introduced by Valdimarsson \cite{V-08}. Let $E_1,\ldots,E_k$ be proper subspaces of $\R^n$. For any $\varepsilon \in \{0,1\}^k$, denote $$F_\varepsilon = \bigcap_{i=1}^k E_i ^{\varepsilon_i},$$ where $E_i^{0} = E_i$ and $E_i^{1} = E_i ^\perp$ for any $i =1,\ldots,k$. The subspaces $F_\varepsilon$ with positive dimension are called independent subspaces. We denote $F_{dep}$ the orthogonal component of $\oplus_{\varepsilon \in J} F_\varepsilon$. We now state a part of \cite[Theorem 3 and Theorem 4]{BKX-23}.
\begin{theorem}[\cite{BKX-23}]
For any non-trivial linear subspaces $E_1, \ldots,E_k$ of $\R^n$, for positive real numbers $ c_1,\ldots,c_k >0$ satisfying $ \sum_{i=1}^m c_i P_{E_i} =I_n $, and for any non-negative $f_i \in L_1 (E_i)$, we have 
\begin{equation}
    \int_{\mathbb{R}^n} \sup _{x=\sum_{i=1}^k c_i x_i, \, x_i \in E_i} \prod_{i=1}^k f_i\left(x_i\right)^{c_i} d x \geq \prod_{i=1}^k\left(\int_{E_i} f_i\right)^{c_i} .
    \label{eq:reverse-BL}
\end{equation}
If $F_{dep} \neq \R^n$, then let $F_1,\ldots,F_l$ be independent subspaces, and if $F_{dep}= \R^n$, then set $l = 1$ and $F_1 = \{0\}$. If equality holds in \eqref{eq:reverse-BL} for $f_i$ with $\int_{E_i} f_i(x) dx >0$ for any $i =1,\ldots,k$, then
\begin{equation}
    \label{eq:equality-condition-BL}
    f_i(x) = \theta_i e^{-\langle AP_{F_{dep}} x, P_{F_{dep}} x -b_i \rangle} \prod_{F_j \subset E_i} h_j (P_{F_j}(x-w_i)), \quad \text{ a.e. } x\in E_i,
\end{equation}
where 
\begin{itemize}
    \item $\theta_i >0, b_i \in E_i \cap F_{dep}$ for $w_i \in E_i $ for any $ i  = 1,\ldots,k$,
    \item $h_j \in L_1 (F_j)$ is non-negative for $j=1,\ldots,l$.
\end{itemize}
\end{theorem}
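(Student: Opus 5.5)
The inequality \eqref{eq:reverse-BL} is Barthe's reverse Brascamp--Lieb inequality, and I would prove it with Barthe's mass-transport argument. The equality characterization \eqref{eq:equality-condition-BL} is the Bennett--Bez--Carbery--Christ / Valdimarsson-type analysis carried out in \cite{BKX-23}; I would follow that argument rather than rederive it in full.

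\textbf{Setup.} By homogeneity, normalize $\int_{E_i} f_i = 1$. Write $P_i := P_{E_i}$, $n_i := \dim E_i$, and let $\gamma_i$ be the standard Gaussian density on $E_i$. By Brenier's theorem there is a convex potential $\varphi_i$ on $E_i$ such that $T_i = \nabla\varphi_i$ pushes $\gamma_i$ forward to $f_i$. The Monge--Amp\`ere equation then gives
\[
f_i(T_i z)\,\det DT_i(z) = \gamma_i(z)
\]
$\gamma_i$-a.e., with $DT_i$ symmetric and positive semidefinite. Define
\[
\Theta(y) = \sum_i c_i T_i(P_i y) = \nabla\Bigl(\sum_i c_i \varphi_i \circ P_i\Bigr)(y).
\]
The potential $\sum_i c_i \varphi_i\circ P_i$ is convex, so $\Theta$ is a monotone gradient map. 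Its differential is $D\Theta(y) = \sum_i c_i P_i^* DT_i(P_iy) P_i$, and the frame condition $\sum_i c_iP_i = I_n$ makes this matrix positive definite wherever each $DT_i$ is. Hence $\Theta$ is injective a.e., and the change of variables $x = \Theta(y)$ is legitimate.

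\textbf{Proof of the inequality.} Write $F$ for the integrand on the left of \eqref{eq:reverse-BL}. Since $\Theta(y) = \sum_i c_i x_i$ with $x_i = T_i(P_iy) \in E_i$, we have $F(\Theta(y)) \ge \prod_i f_i(T_iP_iy)^{c_i}$. Therefore
\[
\int F \ge \int F(\Theta(y))\det D\Theta(y)\,dy \ge \int \prod_i f_i(T_iP_iy)^{c_i}\,\det\Bigl(\sum_i c_i P_i^*DT_iP_i\Bigr)dy .
\]
Next apply the Ball--Barthe determinant inequality: under $\sum_i c_iP_i = I_n$, for positive symmetric $A_i$ on $E_i$,
\[
\det\Bigl(\sum_i c_iP_i^*A_iP_i\Bigr) \ge \prod_i \det(A_i)^{c_i}.
\]
This follows from Cauchy--Binet together with the weighted AM--GM inequality. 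Combining it with Monge--Amp\`ere bounds the integrand below by $\prod_i \gamma_i(P_iy)^{c_i}$. Taking traces in the frame condition gives $\sum_i c_i n_i = n$, and the frame condition itself gives $\sum_i c_i|P_iy|^2 = |y|^2$. So this product is exactly the standard Gaussian density on $\R^n$, which integrates to $1$. This proves \eqref{eq:reverse-BL}.

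\textbf{Equality case.} If equality holds, every inequality above is an equality a.e. The AM--GM step in the Ball--Barthe inequality must then be tight. By the structure theory of the frame $(E_i,c_i)$, this forces each $DT_i(P_iy)$ to preserve the decomposition $\R^n = F_{dep}\oplus F_1\oplus\cdots\oplus F_l$ into the dependent subspace and the independent subspaces. On $F_{dep}$ all $DT_i$ must coincide with the restriction of one fixed positive matrix. On each independent $F_j$ there is no constraint, because there the frame splits into an orthogonal sum. Integrating these Jacobian constraints yields that $T_i$ is affine on the $F_{dep}$ component and a free transport along each $F_j\subset E_i$, up to translations $w_i$. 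Pulling back through Monge--Amp\`ere gives the Gaussian factor $\theta_i e^{-\langle AP_{F_{dep}}x, P_{F_{dep}}x-b_i\rangle}$ times arbitrary factors $h_j(P_{F_j}(x-w_i))$, which is \eqref{eq:equality-condition-BL}. The first step of the change of variables must also be tight, meaning the supremum is essentially attained along $\Theta$ and $F$ vanishes off the image of $\Theta$; this is what ties the $b_i$ together.

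\textbf{Main obstacle.} The hard part is regularity. Brenier maps are only a.e. differentiable (Alexandrov), so the "differentiate the equality" step needs either Caffarelli's regularity after a reduction or the more robust approach of \cite{BKX-23}, which tests equality via Gaussian extremizers and a splitting along independent subspaces. I would first attempt to reduce to smooth, strictly positive $f_i$ via the rigidity of equality under Gaussian convolution, and then invoke Valdimarsson's decomposition \cite{V-08}.
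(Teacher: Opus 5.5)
The paper does not prove this statement; it quotes it from \cite{BKX-23}, and the inequality itself is Barthe's. Your proof of \eqref{eq:reverse-BL} is Barthe's transport argument, and it is correct. It uses Brenier maps $T_i=\nabla\varphi_i$ from the standard Gaussians on $E_i$, the monotone map $\Theta=\nabla\Phi$ with $\Phi=\sum_i c_i\varphi_i\circ P_{E_i}$, the Ball--Barthe determinant inequality (reduce to rank one by diagonalizing each $DT_i$), and the identities $\sum_i c_i\dim E_i=n$ and $\sum_i c_i\|P_{E_i}y\|_2^2=\|y\|_2^2$. The measure-theoretic points are standard. Work with Aleksandrov Hessians and McCann's a.e.\ Monge--Amp\`ere equation. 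Then $\nabla\Phi$ is injective on the full-measure set where $\Phi$ has a positive definite Aleksandrov Hessian, because two such points with equal gradients would make $\Phi$ affine on the segment joining them.

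The equality part has a genuine gap: its decisive step is asserted rather than proved. Tightness of the determinant inequality at a point $y$ gives, at best, two things: $A(y):=D\Theta(y)$ commutes with every $P_{E_i}$, and $DT_i(P_{E_i}y)=A(y)|_{E_i}$. This makes $F_{dep}$ and each $F_j$ invariant under $A(y)$, but it is a pointwise statement. It says nothing about how $A(y)$ depends on $y$, so ``structure theory of the frame'' cannot yield ``one fixed positive matrix'' on $F_{dep}$. Without that constancy there is no Gaussian factor, and proving it is the real content of the theorem. Valdimarsson's work gives the decomposition and handles the forward inequality; the transfer to Barthe's inequality is what \cite{BKX-23} does, so it cannot just be invoked.

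Once your regularization makes $\Phi$ smooth, here is one way to fill the step. Since $A(y)=D^2\Phi(y)$, the form $T:=D^3\Phi(y)$ is totally symmetric, and every slice $M(u):=T(u,\cdot,\cdot)$ commutes with every $P_{E_i}$.
\begin{itemize}
\item Each slice is block-diagonal for $F_{dep}\oplus F_1\oplus\cdots\oplus F_l$, which kills all components of $T$ mixing different blocks.
\item On $F_{dep}$, symmetry gives $M(u)v=M(v)u$. Hence $M(bv)=bM(v)$ for every $b$ in the algebra generated by the $P_{E_i}$, and so $[b',b]M(v)=0$.
\item Thus this algebra acts commutatively on the span of the ranges of the $M(v)$. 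That span therefore contains a common eigenvector of all the $P_{E_i}$ unless it is $\{0\}$.
\item Such an eigenvector would lie in some $F_\varepsilon$, hence not in $F_{dep}$. So $T$ vanishes there, and the $F_{dep}$-block of $A(y)$ is constant.
\end{itemize}
Some argument of this kind is needed; as written, the step is missing.

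Smaller points:
\begin{itemize}
\item It is not true that there is ``no constraint'' on an independent $F_j$. The $F_j$-blocks of $DT_i(P_{E_i}y)$ coincide for all $i$ with $F_j\subset E_i$. That is exactly why a single $h_j$, up to the translations $w_i$, appears in \eqref{eq:equality-condition-BL}.
\item Nothing ``ties the $b_i$ together''. Translating each $f_i$ inside $E_i$ leaves both sides of \eqref{eq:reverse-BL} unchanged, so the $b_i$ and $w_i$ are free.
\item Your reduction via Gaussian convolution is sound. The supremal integrand built from the $f_i*g_i$ is pointwise at most the convolution of those built from the $f_i$ and from the $g_i$, so extremizers stay extremizers. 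You still need a deconvolution step, e.g.\ via Fourier transforms, to carry the product and Gaussian structure back to the $f_i$.
\end{itemize}
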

\begingroup
\renewcommand{\MainTheoremExtra}{%
: if $m\ge n \geq 1$, and if $E$ is an $n$-dimensional subspace of $\mathbb{R}^m$, then
        \begin{equation} \label{eq:projection-cap-conj}
            | P_E B_2^m|_n\leq 2^n|P_E(B_2^m \cap \mathbb{R}^m_+)|_n .
        \end{equation}
    Moreover, equality holds if and only if there exists a partition  $U_0 , U_1, \dots, U_n $ of $[m]$ such that $U_j \neq \emptyset$ for every $ 1 \leq j \leq n$, $U_0 = \{ i\in [m] :P_E e_i = 0 \}$, and 
    % there is a partition of $\{1,\dots,m\}$ into $n$ non empty subsets $U_1,\dots,U_n$, and 
    for every $1\le j\le n$, there exist unit vectors $w_j\in\sum_{i\in U_j}\R_{>0} e_i$ such that $E=\textup{span}(w_1,\dots,w_n)$.
}
\ProjectionCaptwo*
\endgroup
% \ProjectionCaptwo*
% \begin{theorem}
%     \label{thm:projection-cap-conj-2}
%     Conjecture~\ref{conj:projection-cap-conj} holds for $ q= 2$.
% \end{theorem}

\begin{proof}
%{\color{purple} MM: We should also mention somewhere (maybe in the introduction) that this method comes from Franck Barthe, and that while Barthe's result can be solved by using sections, we cannot do the same for this result.} 
    % Using Theorem~\ref{thm: equivalent-RS-Proj} to prove the inequality, it is enough to check that if $m\ge n$ and $E$ is an $n$-dimensional subspace $\mathbb{R}^m$, then
    % \begin{equation}
    %     |P_E(B_2^m\cap \mathbb{R}^m_+)|_n \ge \frac{1}{2^n}\,|P_E B_2^m|_n.
    % \end{equation}
    If $K$ is an arbitrary compact convex subset of $E$ that contains the origin, then we have the formula
    \begin{equation}\label{eq:volume_subspace_formula}
        |K|_n=\frac{1}{\Gamma\left(1+\frac{n}{q}\right)}\int_{E}e^{-\|x\|_K^q}dx.
    \end{equation}  
    We use \eqref{eq:volume_subspace_formula} with $K=P_E(B_q^m\cap \mathbb{R}^m_+)$. Let $x\in E$ and compute
    \begin{equation*}
        \|x\|_{P_E(B_q^m\cap \mathbb{R}^m_+)}=\inf_{x=P_Ez}\|z\|_{B_q^m\cap \mathbb{R}^m_+}=\inf_{\substack{x=P_Ez\\z\in \mathbb{R}^m_+}}\left(\sum_{i=1}^{m}|z_i|^q\right)^{\frac{1}{q}}. 
    \end{equation*}
    Let $\{e_1,\dots,e_m\}$ denote the canonical basis for $\mathbb{R}^m$ and set $U_0 = \{ i: P_Ee_i = 0 \}$.
    % For any $i \not \in U_0$, we denote $c_i:=\|P_Ee_i\|_2^2 >0$ and $u_i:=P_Ee_i/\|P_Ee_i\|_2$. 
    For any $i \in [m]$, we denote $c_i:=\|P_Ee_i\|_2^2$ and
    \[
        u_i := \begin{cases}
            P_Ee_i/\|P_Ee_i\|_2 & i \not\in U_0,
            \\
            0 &i \in U_0.
        \end{cases}
    \]
    Then, we express the orthogonal projection $P_E$ onto $E$ as
    \begin{equation*}
     % P_E=\sum_{i=1}^{m}e_i\otimes P_Ee_i=\sum_{i=1}^{m}\sqrt{c_i}e_i\otimes u_i\quad   
     P_E=\sum_{i=1}^{m}P_Ee_i\otimes P_Ee_i=\sum_{i=1}^{m} c_iu_i\otimes u_i = \sum_{i=1}^{m} c_iP_{E_i} ,
    \end{equation*}
    where $(a \otimes b)(x) =\langle x, b\rangle a$ is a rank-one operator and $E_i := \R u_i$. 
    Taking the trace, we deduce that $\sum_{i=1}^{m}c_i=n.$
    % \begin{equation}\label{eq_trace_equals_n}
    %     \sum_{i=1}^{m}c_i=n.
    % \end{equation}
    For the following computation, we will express an arbitrary $z\in \mathbb{R}^m$ as $z=\sum_{i=1}^{m}z_ie_i$. With this notation, we have $P_Ez=\sum_{i=1}^m\sqrt{c_i}z_iu_i$. Then, we compute
    \begin{equation*}
        \begin{split}
            |P_E(B_q^m\cap \mathbb{R}^m_+)|_n&=\frac{1}{\Gamma\left(1+\frac{n}{q}\right)}\int_Ee^{-\inf\{\sum_{i=1}^{m}|z_i|^q:\ z\in \mathbb{R}^m_+,\ x=P_Ez\}}dx\\
            &=\frac{1}{\Gamma\left(1+\frac{n}{q}\right)}\int_E\sup\left\{\prod_{i=1}^me^{-z_i^q}:z\in\mathbb{R}^m_+,x=P_Ez\right\}dx\\
            &=\frac{1}{\Gamma\left(1+\frac{n}{q}\right)}\int_E\sup\left\{\prod_{i=1}^me^{-z_i^q}: z_i\geq0,x=\sum_{i=1}^m\sqrt{c_i}z_iu_i\right\}dx
            \\
            &=\frac{1}{\Gamma\left(1+\frac{n}{q}\right)}\int_E\sup\left\{\prod_{i\not \in U_0}e^{-z_i^q}: z_i\geq0,x=\sum_{i=1}^m\sqrt{c_i}z_iu_i\right\}dx.
        \end{split}
    \end{equation*}
    With the change of variables $t_i:=z_i/\sqrt{c_i}$ for any $i \not \in U_0$, the last integral in the above computation becomes
    \begin{equation*}
      %  \frac{1}{\Gamma\left(1+\frac{n}{q}\right)}
      \int_E\sup\left\{\prod_{i\not \in U_0}\left(e^{-c_i^{\frac{q}{2}-1}t_i^q}\right)^{c_i}:t_i\geq0,x=\sum_{i=1}^mc_it_iu_i\right\}dx
      .
    \end{equation*}
    Now, for each $i$, define the function $f_i:E_i\rightarrow [0,+\infty)$ by
    \begin{equation*}
        f_i(t_iu_i):=e^{-c_i^{\frac{q}{2}-1}t_i^q}\mathbbm{1}_{[0,+\infty)}(t_i).
    \end{equation*}
    From the definition of $f_i$ and the reverse Brascamp--Lieb inequality \eqref{eq:reverse-BL}, we get the bound 
    \begin{equation*}
        %\frac{1}{\Gamma\left(1+\frac{n}{q}\right)}
        \int_{E}\sup_{x= 
        \sum_{i=1}^m c_it_iu_i}\prod_{i\not \in U_0}f_i^{c_i}(t_iu_i) dx\geq 
        %\frac{1}{\Gamma\left(1+\frac{n}{q}\right)}
        \prod_{i\not \in U_0}\left(\int_{0}^{+\infty}f_i(t_iu_i)dt_i\right)^{c_i}.
    \end{equation*}
    To estimate the product on the right-hand side of the above inequality, set $\alpha_i:=c_i^{1/2-1/q}$, and make the change of variables $x_i:=\alpha_it_i$ to get
    \begin{equation*}
        \int_{0}^{+\infty}f_i(t_iu_i)dt_i=\int_{0}^{+\infty}e^{-(\alpha_it_i)^q}dt_i=\frac{1}{\alpha_i}\int_{0}^{+\infty}e^{-x_i^q}dx_i.
    \end{equation*}
    Making one more change of variable $y_i:=x_i^q$, we compute
    \begin{equation*}
        \frac{1}{\alpha_i}\int_{0}^{+\infty}e^{-x_i^q}dx_i=\frac{1}{\alpha_i}\cdot \frac{1}{q}\cdot\Gamma\left(\frac{1}{q}\right)=\frac{1}{\alpha_i}\cdot\Gamma\left(1+\frac{1}{q}\right).
    \end{equation*}
    Using the fact that $\sum_{i\not \in U_0}c_i=n$, we find that
    \begin{equation*}
        \frac{1}{\Gamma\left(1+\frac{n}{q}\right)}\prod_{i\not \in U_0}
        \left(\int_{0}^{+\infty}f_i(t_iu_i)dt_i\right)^{c_i}
        = 
        \frac{\Gamma\left(1+\frac{1}{q}\right)^n}{\Gamma\left(1+\frac{n}{q}\right)}\cdot\frac{1}{\left({\prod}_{i\not \in U_0}c_i^{c_i}\right)^{\frac{1}{2}-\frac{1}{q}}}.
    \end{equation*}
    Now, the condition $c_i=\|P_Ee_i\|_2^2$ implies that $c_i\leq 1$, so that if $q \geq 2$, then
    \begin{equation*}
        \left(\prod_{i\not \in U_0}c_i^{c_i}\right)^{\frac{1}{2}-\frac{1}{q}}\leq 1.
    \end{equation*}
    Putting everything together, we conclude that
    \begin{equation}
    \label{eq:proj-cap-Barthe}
        |P_E(B_q^m\cap \mathbb{R}^m_+)|_n\geq \frac{\Gamma\left(1+\frac{1}{q}\right)^n}{\Gamma\left(1+\frac{n}{q}\right)}=\frac{1}{2^n}|B_q^n|_n,
    \end{equation}
    which is the desired bound when $q =2$.\\

Now, we verify the equality conditions. First, the sufficient condition follows from Lemma~\ref{lem:equ-projection} and it remains to prove the necessary condition holds. Assume that 
\begin{equation}\label{eq:proj-cap-q2}
        |P_E(B_2^m\cap \mathbb{R}^m_+)|= \frac{1}{2^n}\,|P_E B_2^m|.
    \end{equation}
Recall that we want to show that there is a partition of $[m] \setminus U_0$ into $n$ nonempty subsets $U_1,\dots, U_n$, and for every $1\le j\le n$, there exists a unit vector $w_j\in\sum_{i\in U_j}\R_{ \{>0\} } e_i$ such that $E=\textup{span}(w_1,\dots,w_n)$.

First, notice that there is equality in the reverse Brascamp--Lieb inequality for the functions $f_i(tu_i)=e^{-t^2}\mathbbm{1}_{[0,+\infty)}(t)$. Fix ${i\not \in U_0}$. Since $f_i$ can be written as the form \eqref{eq:equality-condition-BL}, there exists $j_i$ such that $F_{j_i} = E_i$. It follows from the definition of $F_{j_i}$ that for every $k \not \in U_0$, either $u_i = \pm u_k$ or $u_i \perp u_k$. Since $i \not\in U_0$ was arbitrary, it follows that for any $i,k \not \in U_0$, either $u_i = \pm u_k$ or $u_i \perp u_k$.
Observe that $\{u_i: i\not \in U_0\}$ spans $E$, so we get that $F_{dep} = \{0\}$ and so for any $i\not \in U_0$, we have
\[
    f_i(tu_i) = \theta_i h_{j_i} (tu_i-w_i) = e^{-t^2}\mathbbm{1}_{[0,+\infty)}(t) \quad \text{ for a.e. } t \in \R,
\]
for some $w_i \in E_i,\theta_i >0$ and $h_{j_i} \in L_1(E_i)$.
Without loss of generality, after changing indices, we may assume that $\{u_{k} : k = 1,\ldots,n\}$ forms an orthonormal basis, and for each $1\leq k \leq n$, we introduce the set $U_k = \{ i : u_i = \pm u_{k} \}$. Note that $(U_k)_{k=1}^n$ forms a partition of $[m] \setminus U_0$ and for any $i \in U_k$, $F_{j_i} = E_i = E_{k}$. 
% We shall prove below that, in fact, $u_{l_k}=u_i$, for all $i\in U_k$. 
% For any $i \in U_k$, observe that
Thus,
\[
    \theta_i h_{j_{k}} (tu_i-w_i)  = f_i(tu_i) =e^{-t^2}\mathbbm{1}_{[0,+\infty)}(t)= f_{k} (tu_{k}) =  \theta_{k} h_{j_{k}} (tu_{k}-w_{k}).
\]
Then, in order for the support of $h_{j_k}$ to match up with the support of $\mathbbm{1}_{[0,+\infty)}$, we must have that $u_i = u_{k}$. The proof is complete using Lemma~\ref{lem:sets-projection}. \qedhere

\end{proof}

% where we use Lemma~\ref{lem:positive_orthant_support_function} in the third equality.
% The following corollary can be proved using the same argument as above.
% \begin{corollary}
%     Let $\{e_1,\dots,e_m\}$ denote the canonical basis for $\mathbb{R}^m$.  For any $q>1$, and  for any $E=\textup{span}(w_j: 1\le j\le n)$, with $w_j\in\sum_{i\in U_j}\R_{\{>0\}}e_i$, where $U_1,\dots, U_n \subset \{1,\dots,m\}$ are non-empty, pairwise-disjoint sets, we have $|P_E(B_q^m \cap \R_+^m)|_n = \frac{1}{2^n}\,|P_E B_q^m|_n$.
% \end{corollary}

\begin{remark}
    We emphasize that this approach applies only for $q =2$, and in particular, it does not yield a proof of Conjecture~\ref{conj:projection-cap-conj} beyond this case. The obstruction is that \eqref{eq:proj-cap-Barthe} requires that $q\ge 2$, and in this range Mathieu Meyer and Pajor proved \cite{MP-88} that $|P_E B_q^m| \geq |B_q^n|$.
\end{remark}
Combining Theorem~\ref{thm: equivalent-RS-Proj} with Theorem~\ref{thm:projection-cap-conj-2}, we have a proof of Conjecture~\ref{conj:Rogers-Shephard-ineq-p-zonoids} in the class of asymmetric $L_2$-zonotopes.
\begin{corollary}
    Let $Z \subset \R^n$ be an asymmetric $L_2$-zonotope. Then
        \begin{equation}
            |Z \oplus_2 -Z| \leq 2^n|Z|.
        \end{equation}
    Equality holds if and only if $Z$ is contained in a hyperplane or is the $L_2$-sum of $n$ segments with the origin as an endpoint.
\end{corollary}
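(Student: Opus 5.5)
This corollary follows by combining Theorem~\ref{thm: equivalent-RS-Proj} with Theorem~\ref{thm:projection-cap-conj-2}, both taken at $p=q=2$. The plan is simply to verify that the hypotheses of the implication $(ii)\Rightarrow(i)$ in Theorem~\ref{thm: equivalent-RS-Proj} are met.

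First, since $p=2$ lies in the range $1<p<\infty$ required by Theorem~\ref{thm: equivalent-RS-Proj}, and its Hölder conjugate is $q=2$, statement $(ii)$ of that theorem becomes exactly the conclusion of Theorem~\ref{thm:projection-cap-conj-2}. This includes both parts:
\begin{enumerate}
\item[(a)] the inequality $|P_EB_2^m|_n\le 2^n|P_E(B_2^m\cap\R^m_+)|_n$, for every $m\ge n$ and every $n$-dimensional subspace $E\subset\R^m$;
\item[(b)] the characterization of equality via a partition $U_0,\dots,U_n$ of $[m]$ and unit vectors $w_j\in\sum_{i\in U_j}\R_{>0}e_i$ spanning $E$.
\end{enumerate}
Applying $(ii)\Rightarrow(i)$ then gives statement $(i)$ with $p=2$. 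That is, for any asymmetric $L_2$-zonotope $Z\subset\R^n$ we get $|Z\oplus_2-Z|\le 2^n|Z|$, with equality if and only if $Z$ lies in a hyperplane or is an $L_2$-sum of $n$ segments having the origin as an endpoint.

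The one point to check is that the equality case transfers correctly, since the equivalence theorem is stated at the level of full statements. In the proof of $(ii)\Rightarrow(i)$, there are two cases:
\begin{itemize}
\item If $Z$ is not full-dimensional, it lies in a hyperplane, and Lemma~\ref{lem:equ-lp-ball} covers this.
\item If $Z$ is full-dimensional, write $Z=U(B_2^m\cap\R^m_+)$. Equality then forces equality in the projection inequality for the subspace $E$ built from the singular value decomposition of $U$. The equality characterization from Theorem~\ref{thm:projection-cap-conj-2} then identifies $Z$ as a linear image of $B_2^n\cap\R^n_+$, which is an $L_2$-sum of $n$ one-sided segments.
\end{itemize}
No step poses a genuine obstacle. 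The only care needed is to confirm that the equality condition proved in Theorem~\ref{thm:projection-cap-conj-2} is verbatim the one required in $(ii)$, and it is.
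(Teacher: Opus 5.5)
Your proposal is correct and is essentially the paper's own argument: the paper obtains this corollary by plugging Theorem~\ref{thm:projection-cap-conj-2} (the inequality together with its equality characterization at $q=2$) into the implication $(ii)\Rightarrow(i)$ of Theorem~\ref{thm: equivalent-RS-Proj} at $p=2$. Your check that the equality case carries over — the hyperplane case via Lemma~\ref{lem:equ-lp-ball}, and the full-dimensional case via the singular-value-decomposition subspace $E$ — is the same reasoning the paper carries out inside the proof of that implication.
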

Theorem~\ref{thm: equivalent-RS-Proj} only yields the equality characterization for Conjecture~\ref{conj:Rogers-Shephard-ineq-p-zonoids} for asymmetric $L_p$-zonotopes, rather than for any asymmetric $L_p$-zonoids. Therefore, as mentioned in the introduction, we give an alternative proof of the inequality below in order to overcome this obstacle.
Recall that a Borel measure $\mu$ on $\s^{n-1}$ is isotropic if 
\begin{align} \label{eq:isotropic_zonoid_support_function_2}
        \|x\|_2^2=\int_{\s^{n-1}}\langle u,x\rangle^2 d\mu(u)
\end{align}
for every $x\in \mathbb{R}^n$. We refer to the paper by Giannopoulos and V. Milman \cite{GM-2000} for an overview of isotropic measures.
\begin{lemma}\label{lem:isotropic_zonoid}
    For every full-dimensional asymmetric $L_2$-zonoid $Z\subset\mathbb{R}^n$ there exist $T\in \textup{GL}_n(\mathbb{R})$ and an isotropic Borel measure $\mu_{TZ}$ on $\s^{n-1}$ such that
    \begin{align}
        h_{TZ}^2(x)=\int_{\s^{n-1}}\langle x,u\rangle_+^2 d\mu_{TZ} (u).
    \end{align}
    % and moreover that 
    % \begin{align} \label{eq:isotropic_zonoid_support_function_2}
    %     \|x\|_2^2=\int_{\s^{n-1}}\langle u,x\rangle^2 d\mu_T(u)
    % \end{align}
    % holds for each $x\in \mathbb{R}^n$. 
\end{lemma}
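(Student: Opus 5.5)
The plan is to reduce to the isotropic normalization of the symmetric measure. Since $Z$ is an asymmetric $L_2$-zonoid, there is a positive finite Borel measure $\mu$ on $\s^{n-1}$ with
\[
h_Z^2(x)=\int_{\s^{n-1}}\langle x,u\rangle_+^2\,d\mu(u).
\]
Associated to it we form the quadratic form
\[
Q(x):=\int_{\s^{n-1}}\langle x,u\rangle^2\,d\mu(u)=\langle Ax,x\rangle, \qquad A:=\int_{\s^{n-1}}u\otimes u\,d\mu(u).
\]
The first step is to show that $A$ is positive definite. It is positive semidefinite by construction. If $Ax_0=0$ for some $x_0\neq0$, then $\langle x_0,u\rangle=0$ for $\mu$-a.e. $u$. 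Hence $h_Z(x_0)=h_Z(-x_0)=0$, so $Z\subset x_0^\perp$, contradicting that $Z$ is full-dimensional.

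Next we set $T:=A^{-1/2}$, which is symmetric and invertible, so $T\in\textup{GL}_n(\R)$. Using $h_{TZ}(x)=h_Z(T^*x)=h_Z(Tx)$, we get
\[
h_{TZ}^2(x)=\int_{\s^{n-1}}\langle x,Tu\rangle_+^2\,d\mu(u).
\]
Note that $Tu\neq 0$ for every $u$. We now write $Tu=\|Tu\|_2\,\phi(u)$ with $\phi(u):=Tu/\|Tu\|_2\in\s^{n-1}$, and define $\mu_{TZ}$ as the pushforward under $\phi$ of the measure $\|Tu\|_2^2\,d\mu(u)$. By positive $2$-homogeneity of $t\mapsto t_+^2$,
\[
h_{TZ}^2(x)=\int_{\s^{n-1}}\langle x,v\rangle_+^2\,d\mu_{TZ}(v),
\]
which is the required representation.

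The last step is to check isotropy. By the same change of variables,
\[
\int_{\s^{n-1}}\langle x,v\rangle^2\,d\mu_{TZ}(v)=\int_{\s^{n-1}}\langle Tx,u\rangle^2\,d\mu(u)=\langle ATx,Tx\rangle=\langle A^{1/2}A^{-1/2}x,\,A^{1/2}A^{-1/2}x\rangle\cdot\text{(via }T\text{ symmetric)}=\|x\|_2^2 .
\]
Explicitly, $\langle ATx,Tx\rangle=\langle TATx,x\rangle=\langle x,x\rangle$ because $TAT=A^{-1/2}AA^{-1/2}=I$. This is exactly \eqref{eq:isotropic_zonoid_support_function_2}.

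The only real point is the nondegeneracy of $A$, and that follows from full-dimensionality. Measurability of $\phi$ and finiteness of $\mu_{TZ}$ are routine: $\phi$ is continuous and $\|Tu\|_2$ is bounded on the sphere. If the representing measure were only given via approximation by zonotopes, I would first justify its existence by weak-$*$ compactness of the discrete measures, whose total masses are bounded since $\int\langle x,u\rangle^2d\mu\le 2h_{Z\oplus_2-Z}(x)^2$.
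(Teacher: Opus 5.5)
Your argument is correct and is essentially the paper's proof: both choose $T$ so that $TZ\oplus_2-TZ$ is the Euclidean ball, and then take $\mu_{TZ}$ to be the pushforward of $\|Tu\|_2^2\,d\mu(u)$ under $u\mapsto Tu/\|Tu\|_2$. The one difference is in how $T$ is obtained. The paper cites Lewis and Lutwak--Yang--Zhang for its existence, whereas you construct it explicitly as $T=A^{-1/2}$, using that $h_{Z\oplus_2-Z}^2(x)=\langle Ax,x\rangle$ (so $Z\oplus_2-Z$ is an ellipsoid), and you verify directly from full-dimensionality that $A$ is nondegenerate. This makes your proof self-contained. The garbled middle expression in your isotropy display can be dropped, since the identity $TAT=I$ stated right after it does all the work.
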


\begin{proof}
    % There exist a positive measure $\nu$ on $S^{n-1}$ such that
    % \[
    %     h_{Z}(x)=\left(\int_{S^{n-1}}\langle x,u\rangle_+^pd\nu(u)\right)^\frac{1}{p}.
    % \]
    % Thus,
    % \begin{align}
    %     h_{Z \oplus_p -Z}^p(x)
    %     &= h_{Z}^p(x) + h_{-Z}^p(x) = h_{Z}^p(x) + h_{Z}^p(-x)
    %     \\
    %     &=\int_{S^{n-1}}\langle x,u\rangle_+^p \,d\nu(u) +\int_{S^{n-1}}\langle -x,u\rangle_+^p\,d\nu(u)
    %     \\
    %     &=\int_{S^{n-1}}|\langle x,u\rangle|^p \, d\nu(u).
    % \end{align}
    Since $Z \oplus_2 -Z$ is origin-symmetric, its support function $h_{Z \oplus_2 -Z} (x)$ is a norm. It was proved Lewis \cite{L-78} and also in \cite[Theorem 8.2]{LYZ-05} by Lutwak, Yang, and Zhang that there exists $ T \in \textup{GL}_n(\R)$ and a Borel measure $\mu$ satisfying 
    \begin{align}
    \label{eq:measure-TzpTz}
        h_{T(Z \oplus_2 -Z)}^2 (x) =h_{Z \oplus_2 -Z}^2 (T^* x) = \int_{\s^{n-1}} | \langle x,u \rangle |^2 \,d \mu (u) = \|x\|_2^2.
    \end{align}
    By definition, there exists a Borel measure $\nu$ on $\s^{n-1}$ such that
    \[
        h_{Z}(x)=\left(\int_{\s^{n-1}}\langle x,u\rangle_+^2 d\nu(u)\right)^\frac{1}{2}.
    \]
    Therefore,
    \begin{align}
        \label{eq:support-tz}
        h_{TZ}^2 (x) = h_{Z}^2(T^*x) = \int_{\s^{n-1}} \langle T^*x,u \rangle_+^2 \, d\nu(u)  = \int_{\s^{n-1}} \langle x,Tu \rangle_+^2 \, d\nu(u).
    \end{align} 
    Define a measure $\mu_{TZ}$ to be the pushforward of a measure $\nu_{TZ}$ on $\s^{n-1}$ given by $u \mapsto \frac{Tu}{\|Tu\|_2}$ where $d \nu_{TZ} (u)= \|Tu\|_2^2 d \nu(u)$. Then,
    \begin{equation}
        \label{eq:measure-Tz}
        \int_{\s^{n-1}} f(u) d\mu_{TZ}(u) =\int_{\s^{n-1}} f\left(\frac{Tu}{\|Tu\|_2}\right) \, d\nu_{TZ}(u) = \int_{\s^{n-1}} f\left(\frac{Tu}{\|Tu\|_2}\right) \|Tu\|_2^2\, d\nu(u).
    \end{equation}
    Substituting $f = \langle x, \cdot \rangle_+^2$ in \eqref{eq:measure-Tz} together with \eqref{eq:support-tz}, we obtain
    \[
        h_{TZ}^2 (x) = \int_{\s^{n-1}} \langle x, u \rangle_+^2 d\mu_{TZ}(u).
    \]
    It follows from \eqref{eq:measure-TzpTz} that
    \begin{align}
        \|x\|_2^2 = \|x\|_{TZ\oplus_2 -TZ}^2
        &= h_{TZ}^2(x) + h_{-TZ}^2(x) = h_{TZ}^2(x) + h_{TZ}^2(-x)
        \\
        &=\int_{\s^{n-1}}\langle x,u\rangle_+^2 \,d\mu_{TZ}(u) +\int_{\s^{n-1}}\langle -x,u\rangle_+^2\,d\mu_{TZ}(u)
        \\
        &=\int_{\s^{n-1}}|\langle x,u\rangle|^2 \, d\mu_{TZ}(u). \qedhere
    \end{align}
\end{proof}

We use the continuous version of the reverse Brascamp-Lieb inequality  proved by Barthe in \cite[Theorem 1 and Theorem 2]{B-04} presented in the following theorem. We recall that a family of functions $(f_u)_{u \in \s^{n-1}}$ satisfies hypothesis (H) if it satisfies the following two conditions:
\begin{itemize}[wide =8pt, labelwidth= .4cm]
    \item There exists a continuous function $F:\s^{n-1}\times \R \to (0,+\infty)$ 
    % and two functions $a,b$ on $\s^{n-1}$ with $a<b$ pointwise and $a$ (respectively $b$) is either real-valued continuous or constant with value in $\{-\infty,\infty\}$ 
    such that for all $(u,t)\in \s^{n-1} \times \R$,
    \[
        f_u(x) = \mathbbm{1}_{[0,+\infty)}(x) F(u,x).
    \]
    \item There exists a function $\mathcal{F} \in L_1(\R) \cap L_\infty (\R)$ such that for all $u \in \s^{n-1}$, one has $ f_u \leq \mathcal{F}.$
\end{itemize}

\begin{theorem}[\cite{B-04}] \label{thm:cont-BL}
    Let $\mu$ be an isotropic Borel measure on $\s^{n-1}$ and for $u\in \s^{n-1}$ let $f_{u}:\mathbb{R}\rightarrow \mathbb{R}_+$ be such that $(f_{u})_{u\in S^{n-1}}$ satisfies the hypothesis (H). If $h:\mathbb{R}^n\rightarrow \mathbb{R}$ is a measurable function such that for every integrable function $g$ on the sphere
    \begin{align*}
        h\left(\int_{S^{n-1}}ug(u)d\mu(u)\right)\geq \exp\left(\int_{S^{n-1}}\log f_u(g(u))d\mu(u)\right),
    \end{align*}
    then 
    \begin{align*}
        \int_{\mathbb{R}^n}h(x)dx\geq \exp\left(\int_{S^{n-1}}\log\left(\int_{\mathbb{R}}f_u(x)dx\right)d\mu(u)\right).
    \end{align*}
    Moreover, if equality holds and none of the $f_u$ are Gaussian, then there exists an orthonormal basis $(u_1,\dots,u_n)$ of $\mathbb{R}^n$ such that $\textup{supp}(\mu)\subset \{\pm u_1,\dots,\pm u_n\}$.
\end{theorem}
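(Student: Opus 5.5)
The plan is to follow Barthe's mass transportation proof of the reverse Brascamp--Lieb inequality, adapted to a continuous family of one-dimensional data indexed by $\mathbb{S}^{n-1}$.

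\textbf{Normalization and transport maps.} Write $\gamma$ for the standard Gaussian density on $\mathbb{R}$. Set $m_u:=\int_{\mathbb{R}} f_u$ for $u\in\mathbb{S}^{n-1}$. Let $T_u:\mathbb{R}\to(0,+\infty)$ be the increasing map transporting $\gamma$ onto $f_u/m_u$. It is characterized by
\[
T_u'(s)\,\frac{f_u(T_u(s))}{m_u}=\gamma(s).
\]
Hypothesis (H) ensures three things: $f_u$ is positive and continuous on $[0,+\infty)$, so $T_u$ is $C^1$ with $T_u'>0$; $(u,s)\mapsto T_u(s)$ is continuous; and, thanks to the uniform bound $f_u\le\mathcal F$, the quantities below are integrable in $u$. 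Taking traces in the isotropy relation gives $\mu(\mathbb{S}^{n-1})=n$.

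\textbf{The map $\Theta$.} Define
\[
\Theta(y):=\int_{\mathbb{S}^{n-1}}T_u(\langle y,u\rangle)\,u\,d\mu(u).
\]
$\Theta$ is the gradient of the convex function $y\mapsto\int\Phi_u(\langle y,u\rangle)\,d\mu(u)$, where $\Phi_u'=T_u$. Its differential is
\[
d\Theta(y)=\int T_u'(\langle y,u\rangle)\,u\otimes u\,d\mu(u).
\]
This matrix is positive definite, because $T_u'>0$ and isotropy prevents $\mu$ from concentrating in a hyperplane. Hence $\Theta$ is injective. Apply the hypothesis with $g(u)=T_u(\langle y,u\rangle)$ and change variables:
\[
\int_{\mathbb{R}^n}h\ \ge\ \int_{\mathbb{R}^n}h(\Theta(y))\det d\Theta(y)\,dy
\ \ge\ \int_{\mathbb{R}^n}\exp\Bigl(\int\log f_u(T_u(\langle y,u\rangle))\,d\mu\Bigr)\det d\Theta(y)\,dy .
\]
Substitute $f_u(T_u(s))=m_u\gamma(s)/T_u'(s)$. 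Isotropy gives
\[
\int\log\gamma(\langle y,u\rangle)\,d\mu=-\tfrac n2\log(2\pi)-\tfrac12\|y\|_2^2,
\]
which is exactly $\log$ of the standard Gaussian density on $\mathbb{R}^n$. The integrand is therefore bounded below by
\[
\exp\Bigl(\int\log m_u\,d\mu\Bigr)\,\gamma_n(y)\,\frac{\det\int\lambda_u\,u\otimes u\,d\mu}{\exp\int\log\lambda_u\,d\mu},\qquad \lambda_u=T_u'(\langle y,u\rangle).
\]
Integrating $\gamma_n$ gives $1$, so the claim is reduced to the determinant inequality below.

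\textbf{Main obstacle: the continuous Ball--Barthe determinant inequality.} We need, for isotropic $\mu$ and every positive continuous $\lambda$,
\[
\det\int\lambda_u\,u\otimes u\,d\mu(u)\ \ge\ \exp\int\log\lambda_u\,d\mu(u).
\]
I would prove it by Cauchy--Binet. Expand the determinant as
\[
\frac1{n!}\int\!\!\cdots\!\!\int\prod_{k=1}^n\lambda_{v_k}\,\det(v_1,\dots,v_n)^2\,d\mu^{\otimes n}.
\]
Isotropy makes $\frac1{n!}\det(v_1,\dots,v_n)^2\,d\mu^{\otimes n}$ a probability measure. Jensen's inequality (AM--GM) for $\log$ then gives the bound, since each marginal of this probability measure integrates $\log\lambda_{v_k}$ to $\frac1n\int\log\lambda\,d\mu$. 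An alternative route is to discretize $\mu$ and use the finite case.

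\textbf{Equality case.} Equality forces equality in Jensen for almost every $y$. Hence $\prod_k T'_{v_k}(\langle y,v_k\rangle)$ is constant on the support of $\det(v_1,\dots,v_n)^2\,d\mu^{\otimes n}$. When no $f_u$ is Gaussian, no $T_u$ is affine. Differentiating in $y$ then shows that directions in $\operatorname{supp}\mu$ must be pairwise parallel or orthogonal. This yields an orthonormal basis $(u_1,\dots,u_n)$ with $\operatorname{supp}\mu\subset\{\pm u_1,\dots,\pm u_n\}$. The delicate part is making this rigorous with only the regularity provided by (H).
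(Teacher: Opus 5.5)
The paper does not prove this statement; it quotes it from Barthe~\cite{B-04}. Your proof of the inequality is essentially Barthe's transport argument, and it is correct. Under (H), $(u,s)\mapsto T_u(s)$ and $T_u'(s)=m_u\gamma(s)/F(u,T_u(s))$ are continuous and positive, and $\Theta$ is an injective $C^1$ gradient map. Moreover $h\ge 0$ everywhere, since every $x$ equals $\int u\langle x,u\rangle\,d\mu(u)$. The continuous Cauchy--Binet expansion plus Jensen then gives the determinant inequality. You leave one assertion unjustified: that every marginal of $P=\frac1{n!}\det(v_1,\dots,v_n)^2\,d\mu^{\otimes n}$ equals $\mu/n$. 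It follows by differentiating $t\mapsto\det\int e^{t\phi(u)}\,u\otimes u\,d\mu(u)$ at $t=0$: Jacobi's formula gives $\int\phi\,d\mu$, and the Cauchy--Binet expansion gives $n\int\phi(v_1)\,dP$.

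The gap is in the equality case. From $\int\gamma_n(D-1)=0$ and continuity you do get, for every $y$, that $\prod_k T'_{v_k}(\langle y,v_k\rangle)=C(y)$ for all $(v_1,\dots,v_n)\in S^n$ with $\det(v_1,\dots,v_n)\neq 0$, where $S=\operatorname{supp}\mu$. But ``differentiating in $y$'' extracts nothing from this. First, $C(y)$ is an unknown function. Second, under (H) $F$ is only continuous, so $T_u'$ need not be differentiable.

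What is needed is a basis exchange. Suppose $v,w\in S$ are not parallel and some $v_2,\dots,v_n\in S$ complete both $v$ and $w$ to bases. Dividing the two identities gives $T_v'(\langle y,v\rangle)=T_w'(\langle y,w\rangle)$. Letting $y$ run through $w^\perp$ forces $T_v'$ to be constant, i.e.\ $T_v$ affine. This is impossible because $T_v(\mathbb{R})=(0,\infty)$; so under (H) the non-Gaussian hypothesis is automatic.

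It remains to see which pairs admit such a common completion, and this uses isotropy a second time.
\begin{itemize}
\item Decompose $S$ into the connected components $S_1,\dots,S_r$ of its linear matroid: two vectors are related when they lie on a common circuit. The matroid has finite rank, so the usual theory applies.
\item Then $\mathbb{R}^n=V_1\oplus\cdots\oplus V_r$ with $V_k=\operatorname{span}S_k$ and $S_k=S\cap V_k$.
\item Writing $I=\sum_k\int_{S_k}u\otimes u\,d\mu$ as a sum of positive semidefinite operators with ranges $V_k$ forces $V_j\perp V_k$ for $j\neq k$.
\item If non-parallel $v,w$ lie in the same component, they lie on a circuit $C$. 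Extend $C\setminus\{w\}$ to a basis $B\subset S$. Then both $B$ and $(B\setminus\{v\})\cup\{w\}$ are bases, which the exchange argument rules out.
\end{itemize}
Hence every $V_k$ is a line, the lines are mutually orthogonal, $r=n$, and $S\subset\{\pm u_1,\dots,\pm u_n\}$. Without this combinatorial step, your sketch does not reach the stated conclusion.
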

We note that the logarithmic terms take values in $[-\infty,+\infty)$ and that their integral makes sense also in $[-\infty,+\infty)$, since by the hypothesis (H) their positive parts converge.
\begingroup
\renewcommand{\MainTheoremExtra}{%
: Let $Z \subset \R^n$ be an asymmetric $L_2$-zonoid. Then
        \begin{equation}\label{eq:Rogers-Shephard-ineq-p-zonoids-conj}
            |Z \oplus_2 -Z| \leq 2^n|Z|.
        \end{equation}
        Equality holds if and only if $Z$ is contained in a hyperplane or is the $L_2$-sum of $n$ segments with the origin as an endpoint
}
\RSInqTwo*
\endgroup
% \RSInqTwo*
% \begin{theorem}
% Let $Z\subset \mathbb{R}^n$ be a full-dimensional $L_2$-zonoid. Then
%     \begin{equation*}
%         |Z\oplus_2(-Z)|\leq 2^n|Z|.
%     \end{equation*}
%     Equality holds if and only if $Z$ is the the $L_2$-sum of $n$ linearly independent segments that each have the origin as an endpoint. 
% \end{theorem}

\begin{proof}
    % Let $Z\subset\mathbb{R}^n$ be an asymmetric $L_2$-zonoid. It suffices to prove that
    % \begin{equation*}
    %     |Z\oplus_2(-Z)|\leq 2^n|Z|,
    % \end{equation*}
    % with equality holds if and only if $Z$ is the $L_2$-sum of $n$ linearly independent segments that each have the origin as an endpoint. 
    
    Using Lemma~\ref{lem:isotropic_zonoid}, we may assume that the generating measure $\mu$ is an isotropic Borel measure. That is,
     \begin{align}\label{eq:isotropic_zonoid_support_function}
        h_{Z}^2(x)=\int_{\s^{n-1}}\langle x,u\rangle_+^2 d\mu (u) \quad \text { and } \quad  \|x\|_2^2=\int_{\s^{n-1}}\langle u,x\rangle^2 d\mu(u).
    \end{align}
    We will show that if $x\in \mathbb{R}^n$, then
    \begin{equation}
        \|x\|_{Z}^2\leq \inf\left\{\int_{\s^{n-1}}g(u)^2d\mu(u):x=\int_{\s^{n-1}}ug(u)d\mu (u),\hspace{0.5mm} g\in L^1(\s^{n-1}),\hspace{0.5mm} g\geq 0\right\}.
    \end{equation}
    If $g\in L^1(\s^{n-1})$ is a nonnegative real-valued function for which 
    \begin{equation*}
        x=\int_{\s^{n-1}}ug(u)d\mu(u),
    \end{equation*}
    then for $y\in \mathbb{R}^n$ we use H\"{o}lder's inequality to get
    \begin{align*}
        \langle x,y\rangle 
        &=\int_{\s^{n-1}}\langle u,y\rangle g(u)d\mu(u)\leq \int_{\s^{n-1}}\langle u,y\rangle_+g(u)d\mu(u)
        \\
        &\leq \left(\int_{\s^{n-1}}\langle u,y\rangle_+^2d\mu(u)\right)^{\frac{1}{2}}\left(\int_{\s^{n-1}}g(u)^2d\mu(u)\right)^{\frac{1}{2}}
        \\
        &\leq \|y\|_{Z^\circ}\left(\int_{\s^{n-1}}g(u)^2d\mu(u)\right)^{\frac{1}{2}},
    \end{align*}
    where $K^\circ = \{ x: \langle x,y \rangle \leq 1 \}$ denotes the polar body of $K$.
    It follows that 
    \begin{equation} \label{eq:suff-con-for-BL}
        \|x\|_{Z}=\sup_{\|y\|_{Z^\circ}\leq 1}\langle x,y\rangle \leq \left(\int_{\s^{n-1}}g(u)^2d\mu
        (u)\right)^{\frac{1}{2}}.
    \end{equation}
    Define the functions $f_u:\mathbb{R}\rightarrow \mathbb{R}_+$ by $f_u(x)=\exp(-x^2)\mathbbm{1}_{\mathbb{R}_+}(x)$ and define the measurable function $h:\mathbb{R}^n\rightarrow \mathbb{R}$ by $h(x):=\exp(-\|x\|_{Z}^2)$. It is clear that $(f_u)_{u \in \s^{n-1}}$ satisfies the hypothesis (H), so we only need to verify that for any $g \in L^1(\s^{n-1})$, one has 
    \begin{align*}
        h\left(\int_{\s^{n-1}}ug(u)d\mu(u)\right)\geq \exp\left(\int_{S^{n-1}}
        \log(f_u(g(u)))
        % -g(u)^2 + \log \mathbbm{1}_{\mathbb{R}_+}
        %(g(u)) 
        d\mu(u)\right),
    \end{align*}
    or equivalently,
    \begin{equation} \label{eq:condition-cont-BL}
        \left\|\int_{\s^{n-1}}ug(u)d\mu(u)\right\|_{Z}^2 \leq \int_{\s^{n-1}} \left(g(u)^2 - \log \mathbbm{1}_{\mathbb{R}_+}(g(u))\right) d\mu(u). 
    \end{equation}
    By \eqref{eq:suff-con-for-BL}, the above condition is true for any $ g $ that is $\mu$-almost everywhere positive. In the case where $g\leq0$ on a set of positive measure, the right hand side is $+\infty$, and the inequality is trivial.
    Now, apply the continuous version of the reverse Brascamp-Lieb inequality in Theorem~\ref{thm:cont-BL} with $f_u(x)=\exp(-x^2)\mathbbm{1}_{\mathbb{R}_+}(x)$ to get 
    \begin{align*}
        \Gamma\left(1+\frac{n}{2}\right)|Z|
        &=\int_{\mathbb{R}^n}h(x)dx
        \geq \exp\left(\int_{\s^{n-1}}\log\left(\int_{\mathbb{R}}f_u(x)dx\right)d\mu(u)\right)
        \\
        &=\exp\left(\mu(\s^{n-1})\log \Gamma \left( 1+\frac{1}{2}\right)\right) = \Gamma \left( 1+\frac{1}{2}\right)^n,
    \end{align*}
    where in the last step, we use that $\mu$ is isotropic and so $ \mu(\s^{n-1}) = n$. Since $\frac{\Gamma\left(1+\frac{1}{2}\right)^n}{\Gamma\left(1+\frac{n}{2}\right)}=\frac{1}{2^n}|B_2^n|_n$, we obtain that
    % \begin{equation}
        $|Z| \geq \frac{1}{2^n}|B_2^n|_n.$
    % \end{equation}
    Since $\mu$ is isotropic and satisfies \eqref{eq:isotropic_zonoid_support_function}, we have
    $$
        \|x\|_2^2 = \int_{\s^{n-1}} \langle u,x \rangle^2 d\mu(u) = h_{Z\oplus_2 -Z}^2 (x),
    $$
    and so $Z\oplus_2 -Z = B_2^n$. Thus, $ \|x\|_{Z\oplus_2 -Z}^2 = \|x\|_2^2$ and then
    \begin{align}
        |Z \oplus_2 -Z| = \frac{1}{\Gamma (1+\frac{n}{2})} \int_{\R^n} \exp{(-\|x\|_2^2)} dx = |B_2^n|_n,
    \end{align}
    which verifies the inequality.

    Now, we characterize the equality case. The sufficiency follows from
    % If $Z$ is the $L_2$-sum of $n$ linearly independent segments that each have the origin as an endpoint. By applying the linear transform, we may assume that $ Z = \bigoplus_{i\in[n]}^{(2)}[0,e_i]$ and then the equality follows from 
    Lemma~\ref{lem:positive_orthant_support_function} and Lemma~\ref{lem:equ-lp-ball}.
    For the other direction, using Lemma~\ref{lem:isotropic_zonoid}, we may assume that the generating measure $\mu$ is an isotropic Borel measure. Using the equality characterization of the continuous version of the reverse Brascamp-Lieb inequality in Theorem~\ref{thm:cont-BL}, since $f_u$ is not Gaussian, we get that there exists an orthonormal basis $(u_1,\dots,u_n)$ of $\mathbb{R}^n$ such that $\textup{supp}(\mu)\subset \{\pm u_1,\dots,\pm u_n\}$. Then, $\mu = \sum_{i=1}^n \left( a_i^2 \delta_{-u_i} +b_i^2 \delta_{u_i}\right)$ for some $a_i,b_i \geq 0$.
    Using \eqref{eq:isotropic_zonoid_support_function}, we obtain
    \[
        h_{Z}^2(x)=\int_{\s^{n-1}}\langle x,u\rangle_+^2 d\mu (u) =  \sum_{i=1}^n \left( a_i^2 \langle x,-u_i\rangle_+^2+ b_i^2 \langle x,u_i\rangle_+^2 \right),
    \]
    implying 
    $$
        Z = \bigoplus_{i\in[n]}^{(2)} [0,-a_iu_i] \oplus_2 \bigoplus_{j\in[n]}^{(2)} [0,b_ju_j] = \bigoplus_{i\in[n]}^{(2)} [-a_iu_i,b_iu_i] .
    $$
    We use Proposition~\ref{prop: RS-n-seg} to conclude that $Z$ is the $L_2$-sum of $n$ segments with the origin as an endpoint.
\end{proof}

\subsection{Projections onto hyperplanes} \label{sec:proj-hyp}
We begin by establishing an auxiliary lemma. We first recall the definition of the \emph{Rademacher averages} of a finite sequence. Let $a=\left(a_1, \ldots, a_m\right) \in \mathbb{R}^m$, and let $\left(\sigma_i\right)_{i=1}^m$ be i.i.d.\ Rademacher random variables (symmetric Bernoulli random variables), that is,
$
\mathbb{P}\left(\sigma_i=1\right)=\mathbb{P}\left(\sigma_i=-1\right)=\frac{1}{2} .
$
The Rademacher average of the sequence $a$ is defined to be the expectation
$$
\mathbb{E}_\sigma|\langle a, \sigma\rangle|
=
% \frac{1}{2^m} \sum_{\sigma \in\{-1,1\}^m}\left|\sum_{i=1}^m \sigma_i a_i\right| 
\frac{1}{2^m} \sum_{\sigma \in\{-1,1\}^m}\left|\langle a, \sigma  \rangle\right|.
$$
% We refer to \cite[Chapter~4]{LT-91} for further information about the Rademacher average.
\begin{lemma}
    \label{lem:Weighted-Expectation}
    Let $ m \geq 2$ be an integer and  $a \in \R^m \setminus \{0\}$. Then 
    \begin{equation}
        \label{eq:Weighted-Expectation-up}
        % \frac{1}{2^m} \sum_{\sigma \in \{ -1,1\}^m} | \langle a , \sigma \rangle |
        \mathbb{E}_\sigma|\langle a, \sigma\rangle|
        \leq 
        \frac{1}{2} \left( 
        \|a\|_1
        % \max_{\sigma \in \{ -1,1\}^m}  | \langle a , \sigma \rangle |
        + \min_{\tau \in \{ -1,1\}^m}  | \langle a , \tau \rangle |\right).
    \end{equation}
    Equality holds if and only if there exists $\varepsilon\in\{-1,1\}^{m}$ for which $|\langle a,\varepsilon \rangle|$ is minimized, and either $a$ has at most two nonzero coordinates and the corresponding coordinates of $\varepsilon$ have different signs from each other, or $a$ has at least three nonzero coordinates and over all coordinates of $\varepsilon$ corresponding to nonzero coordinates of $a$ there exists a coordinate $\varepsilon_k$ whose sign is different from the other coordinates, and the bound
    \begin{equation*}
        2|a_k|\geq \sum_{i=1}^{m}|a_i|
    \end{equation*}
    is satisfied.
\end{lemma}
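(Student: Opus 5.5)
We first reduce to $a_i\ge 0$ for all $i$. Replacing $a_i$ by $|a_i|$ amounts to multiplying $\sigma_i$ by $\operatorname{sign}(a_i)$. This leaves the uniform distribution on $\{-1,1\}^m$ invariant, so $\mathbb{E}_\sigma|\langle a,\sigma\rangle|$ is unchanged. It also permutes $\{-1,1\}^m$, so $\min_\tau|\langle a,\tau\rangle|$ is unchanged, and $\|a\|_1$ is unchanged too. Coordinates with $a_i=0$ play no role and may be dropped. So we assume $a_i>0$ for all $i$ and fix a minimizer $\tau\in\{-1,1\}^m$ of $|\langle a,\tau\rangle|$.

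The key step is a pairing argument. Define $\phi:\{-1,1\}^m\to\{-1,1\}^m$ by $\phi(\sigma)_i=-\sigma_i\tau_i$. Since $\tau_i^2=1$, $\phi$ is an involution and hence a bijection, so
\[
\mathbb{E}_\sigma|\langle a,\sigma\rangle|=\tfrac12\,\mathbb{E}_\sigma\bigl(|\langle a,\sigma\rangle|+|\langle a,\phi(\sigma)\rangle|\bigr).
\]
Put $x=\langle a,\sigma\rangle$ and $y=\langle a,\phi(\sigma)\rangle$; then $|x|+|y|=\max(|x+y|,|x-y|)$. We have $x+y=\sum_i a_i\sigma_i(1-\tau_i)$ and $x-y=\sum_i a_i\sigma_i(1+\tau_i)$. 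Since $a_i\ge0$, this gives
\[
|x+y|\le 2\sum_{\tau_i=-1}a_i=\|a\|_1-\langle a,\tau\rangle,\qquad |x-y|\le 2\sum_{\tau_i=1}a_i=\|a\|_1+\langle a,\tau\rangle .
\]
Hence $|x|+|y|\le \|a\|_1+|\langle a,\tau\rangle|$ pointwise in $\sigma$. Averaging yields \eqref{eq:Weighted-Expectation-up}.

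For the equality case, equality forces the pointwise bound to be attained for every $\sigma$. Say $\langle a,\tau\rangle\ge0$, so the binding term is $|x-y|$, or else $|x+y|$ attains the maximum. Attaining $|x-y|=2\sum_{\tau_i=1}a_i$ requires $\sigma$ to be constant on $\{i:\tau_i=1\}$. We will test this condition with $\sigma$'s that are non-constant on the larger block, and with $\sigma$'s that make both bounds slack. This should show that one of the two blocks $\{\tau_i=1\}$, $\{\tau_i=-1\}$ is a singleton $\{k\}$, or that $m=2$ with the two signs of $\tau$ opposite. Minimality of $|\langle a,\tau\rangle|$ together with the slack-free requirement should then translate into $2a_k\ge\|a\|_1$. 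Conversely, in these configurations we will check that every pair attains the bound by direct computation.

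I expect this equality analysis to be the main obstacle. The inequality itself is a two-line pairing; the equality case needs careful bookkeeping over all $\sigma$ and the possible choices of minimizer $\tau$.
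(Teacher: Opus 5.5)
Your proof of the inequality is correct and is essentially the paper's argument. The paper also pairs $\sigma$ with $\varepsilon\sigma$ for a minimizer $\varepsilon$; your $\phi(\sigma)=-\tau\sigma$ differs only by a global sign, which is irrelevant under $|\langle a,\cdot\rangle|$. The paper then writes $|\langle a,\sigma\rangle|+|\langle a,\varepsilon\sigma\rangle|=2\max\{|\alpha|,|\beta|\}$, with $\alpha,\beta$ the partial sums over the two sign blocks of $\varepsilon$, and bounds each block by its $\ell_1$-mass. Your version even absorbs the paper's separate treatment of a constant minimizer.

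What is missing is the equality characterization. It is part of the statement, but you only outline it with ``should''. Take $a_i>0$ on the (at least two) remaining coordinates, $P=\{i:\tau_i=1\}$, $N=\{i:\tau_i=-1\}$, and $A=\sum_P a_i\ge B=\sum_N a_i$ (replace $\tau$ by $-\tau$ if needed). You must show three things.
(i) $N\neq\emptyset$: flipping one entry of a constant $\tau$ strictly decreases $|\langle a,\tau\rangle|$. This is the only place minimality is used.
(ii) If $|P|,|N|\ge2$, choose $\sigma$ non-constant on both blocks (the paper takes $\sigma=(1,-1,\dots,-1\mid 1,-1,\dots,-1)$). Then both $|x\pm y|$ are slack, so equality fails.
(iii) If $P=\{k\}$, then $A\ge B$ is exactly $2a_k\ge\|a\|_1$. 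If instead $N=\{k\}$ and $|P|\ge2$, a $\sigma$ non-constant on $P$ makes $|x-y|<2A$. Tightness then needs $|x+y|=2B$ to equal $2A$, i.e.\ $2a_k=\|a\|_1$.

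So the mass condition comes from which block binds, not from minimality of $\tau$ as your sketch suggests. For the converse, suppose $a_k\ge\sum_{i\ne k}a_i$. Then every $\tau$ with $\tau_k$ opposite to the other entries is automatically a minimizer, by the triangle inequality. Moreover $|\sum_{i\ne k}a_i\sigma_i|\le a_k$ makes every pair tight. Hence the whole condition amounts to $2\max_k|a_k|\ge\|a\|_1$.

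Finally, your sign reduction is harmless for the inequality, but it changes the sign pattern of the minimizers. So the equality condition as worded only makes sense for $a\in\mathbb{R}^m_+$. For example, for $a=(1,-1)$ equality holds, yet both minimizers $\pm(1,1)$ have equal signs. The paper makes the same silent reduction, and its application only involves nonnegative $a$.
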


\begin{proof}
    Let $a$ be arbitrary and choose $  \delta\in \{-1,1\}^m$ so that $\delta a\in\mathbb{R}^m_+$. Then, the right and left sides of (\ref{eq:Weighted-Expectation-up}) are unchanged if we take $\delta a$ in place of $a$. Accordingly, it is enough to prove this lemma for any $a\in \mathbb{R}^m_+$, which we will do as follows.

    Choose $\varepsilon\in \{-1,1\}^m$ such that 
    \begin{equation*}
        |\langle a,\varepsilon\rangle|=\min_{\tau\in \{-1,1\}^m}|\langle a,\tau \rangle|.
    \end{equation*}
    %Let $
     %   \varepsilon
        % = (\varepsilon_1,\ldots, \varepsilon_m) 
    %    \in \{-1,1\}^m
    %$
    %be chosen so that $|\langle a, \varepsilon\rangle|$ attains the minimum among all sign choices. 
    % The case $ m =2$ is immediate, since both sides coincide. 
     If every coordinate of $\varepsilon$ has the same sign, then $|\langle a,\varepsilon\rangle|=\|a\|_1$. But then the right side of (\ref{eq:Weighted-Expectation-up}) is exactly $\|a\|_1$. By the triangle inequality, the left side of (\ref{eq:Weighted-Expectation-up}) is bounded above by $\|a\|_1$, which verifies the required bound. 
     
     Now, assume that there exist coordinates of $\varepsilon$ with different signs. Reordering the coordinates if needed, we may assume that there exists $1\leq k\leq m-1$ such that 
    $$
        \varepsilon_1=\ldots=\varepsilon_k = 1, \quad \text { and } \quad \varepsilon_{k+1}= \ldots= \varepsilon_m =-1.
    $$
    Using the triangle inequality and the relation $|\alpha +\beta|+|\alpha-\beta|=2\max\{|\alpha|,|\beta|\}$, which holds for any real numbers $\alpha$ and $\beta$, we find for any $\sigma\in \{-1,1\}^m$ that
    % define
    % $$
    %     \sigma^{\prime}=\left(\sigma_1, \ldots, \sigma_k,-\sigma_{k+1}, \ldots,-\sigma_m\right) .
    % $$
    % This defines a unique pairing of sign vectors. For such a pair $\left(\sigma, \sigma^{\prime}\right)$, 
    \begin{equation}
        \label{eq:Weighted-Expectation-up-1}
        \begin{split}
            | \langle a , \sigma \rangle | + |\langle a , \varepsilon \sigma \rangle | 
            &= 2\max\left\{\left|\sum_{i=1}^{k}a_i\sigma_i\right|, \left|\sum_{i=k+1}^{m}a_i\sigma_i\right|\right\}
            \\
            &\leq 2\max\left\{\sum_{i=1}^{k}a_i,\sum_{i=k+1}^{m}a_i\right\}
            \\
            &=
            \| a \|_1 + \min_{\tau \in \{ -1,1\}^m}  | \langle a , \tau \rangle |.
        \end{split}
    \end{equation}
    Sum both sides of \eqref{eq:Weighted-Expectation-up-1} over all $2^m$ elements of $\sigma\in \{-1,1\}^m$ to get
    \begin{equation*}
        \sum_{\sigma\in \{-1,1\}^m}|\langle a,\sigma\rangle|+\sum_{\sigma\in\{-1,1\}^m}|\langle a,\varepsilon\sigma\rangle|\leq 2^m\left(\|a\|_1+\min_{\tau\in \{-1,1\}^m}|\langle a,\tau\rangle|\right).
    \end{equation*}
    Since both terms on the left side are the same, we have proved the bound (\ref{eq:Weighted-Expectation-up}).

    We now characterize the conditions for equality. Without loss of generality, we assume that each coordinate $a_i$ is positive and that $a$ has at least two nonzero coordinates. Suppose that (\ref{eq:Weighted-Expectation-up}) is equality, and let $\varepsilon\in\{-1,1\}^{m}$ be chosen such that $|\langle a,\varepsilon\rangle|$ is minimized. From the fact that
    \begin{equation*}
        \left|\sum_{i=1}^{m}a_i\right|>\left|-a_1+\sum_{i=2}^{m}a_i\right|,
    \end{equation*}
    we see that not all coordinates of $\varepsilon$ have the same sign. Then by following the above proof, there exists $1\leq k\leq m-1$ such that
    \begin{equation*}
        \max\left\{\left|\sum_{i=1}^{k}a_i\sigma_i\right|, \left|\sum_{i=k+1}^{m}a_i\sigma_i\right|\right\}=\max\left\{\sum_{i=1}^{k}a_i,\sum_{i=k+1}^{m}a_i\right\}
    \end{equation*}
    for each $\sigma\in \{-1,1\}^m$. Taking 
    \begin{equation*}
        \sigma=(\underbrace{1,-1,\dots,-1}_{\textup{k terms}},\underbrace{1,-1,\dots,-1}_{\textup{m-k terms}}),
    \end{equation*}
    we find that
    \begin{equation*}
        \max\left\{\left|a_1-\sum_{i=2}^{k}a_i\right|, \left|a_{k+1}-\sum_{i=k+2}^{m}a_i\right|\right\}=\max\left\{a_1+\sum_{i=2}^{k}a_i,a_{k+1}+\sum_{i=k+2}^{m}a_i\right\}.
    \end{equation*}
    In general, if $\alpha_1\leq \alpha_2$ and $\beta_1\leq \beta_2$ are positive real numbers that satisfy $\max\{\alpha_1,\beta_1\}=\max\{\alpha_2,\beta_2\}$, then either $\alpha_1=\alpha_2\geq \beta_2$, or $\beta_1=\beta_2\geq\alpha_2$. We will apply this principle to the above equality. If 
    \begin{equation*}
        \left|a_1-\sum_{i=2}^{k}a_i\right|=a_1+\sum_{i=2}^{k}a_i\geq a_{k+1}+\sum_{i=k+2}^{m}a_i,
    \end{equation*}
    then we must have $k=1$. It follows that we can take $\varepsilon=(1,-1,\dots,-1)$, and compute 
    \begin{equation*}
        2a_1=a_1+a_1\geq a_1+\sum_{i=2}^{m}a_i=\sum_{i=1}^{m}a_i.
    \end{equation*}
    On the other hand, if
    \begin{equation*}
        \left|a_{k+1}-\sum_{i=k+2}^{m}a_i\right|=a_{k+1}+\sum_{i=k+2}^{m}a_i\geq a_1+\sum_{i=2}^{k}a_i,
    \end{equation*}
    then we deduce that $k=m-1$. Similar to the first case, we can set $\varepsilon =(1,\dots,1,-1)$ and we have the lower bound
    \begin{equation*}
        2a_m\geq\sum_{i=1}^{m-1}a_i.
    \end{equation*}
    In either case, we have verified that the required condition is satisfied. 

    To prove the other direction, suppose that there exists $\varepsilon \in \{-1,1\}^m$ such that $|\langle a,\varepsilon\rangle|$ is minimized, there is a coordinate $\varepsilon_k$ whose sign is different from the signs of the other coordinates of $\varepsilon$, and for which the bound
    \begin{equation*}
        2a_k\geq \sum_{i=1}^{m}a_i
    \end{equation*}
    is satisfied. Following the proof of the bound (\ref{eq:Weighted-Expectation-up}), and rearranging coordinates so that $k=1$, we need to show that
    \begin{equation*}
               \max\left\{a_1, \left|\sum_{i=2}^{m}a_i\sigma_i\right|\right\}=\max\left\{a_1,\sum_{i=2}^{m}a_i\right\}
    \end{equation*}
    is true for each $\sigma\in \{-1,1\}^m$. But, this must be true because by assumption
    \begin{equation*}
        a_1\geq \sum_{i=2}^{m}a_i\geq \left|\sum_{i=2}^{m}a_i\sigma_i\right|.
    \end{equation*}
    This verifies that (\ref{eq:Weighted-Expectation-up}) is an equality, and the proof is complete.
    %{\color{red} Equality attempt:} For the equality case, first observe that for $m=2$, the inequality is an equality since both side are coincide. If $a$ yields the equality, then $\sigma a$ is so for every $\sigma \in \{-1,1\}^m$. Also, if some coordinates of $a$ are zero, then it reduces to the lower dimensional case, hence it suffices to consider the case where all coordinates of $a$ are strictly positive. 
    %We note that if all coordinates of $\varepsilon$ can not have the same sign, otherwise the equality condition will force $a = o$. Thus, $k, m-k \geq 1$.
    %To have an equality, \eqref{eq:Weighted-Expectation-up-1} has to be an equality for every $\sigma \in \{-1,1\}^m$, that is
    %\[
     %   \max \{ |a_1 \sigma_1 + \ldots a_k \sigma_k|, |a_{k+1} \sigma_{k+1} + \ldots + a_m \sigma_m |\}
      %  =
       % \max \{ a_1 + \ldots + a_k,  a_{k+1} + \ldots + a_m \}.
    %\]
    %It follows from algebra that $a_1 + \ldots + a_k = |a_1 \sigma_1 + \ldots a_k \sigma_k| \geq a_{k+1} + \ldots + a_m $ or $a_{k+1} + \ldots + a_m  = |a_{k+1} \sigma_{k+1} + \ldots + a_m \sigma_m | \geq a_1 + \ldots + a_k$. Thus, $k = 1$ or $ m -k =1$.
    % Taking $\sigma=(1,\ldots,1)$ and $\sigma=(-1,\ldots,-1)$, we obtain
    % \[
    %     \sum_{i=1}^k a_i = - \sum_{i=k+1}^m a_i .
    % \]
    % Since $a \in \mathbb{R}^m_+$, both sums are nonnegative, and therefore each must be equal to zero.
\end{proof}

\begin{lemma} \label{lem:Weighted-Expectation-lo-1}
    If $m\geq 1$ is an integer, then
    \begin{equation}
        \label{eq:Weighted-Expectation-lo-1}
        \sum_{\sigma \in \{-1,1\}^m} \left| \sum_{i=1}^m \sigma_i \right| = 2m \binom{m-1}{\lfloor \frac{m}{2} \rfloor}.
    \end{equation}
\end{lemma}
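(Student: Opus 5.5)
The plan is to count: for each $j\in\{0,\dots,m\}$, exactly $\binom{m}{j}$ sign vectors have $j$ coordinates equal to $-1$, and for these $\sum_i\sigma_i=m-2j$. Hence the left side of \eqref{eq:Weighted-Expectation-lo-1} equals
\begin{equation*}
S_m:=\sum_{j=0}^m\binom{m}{j}|m-2j|.
\end{equation*}
By the symmetry $j\mapsto m-j$, this is $2\sum_{j<m/2}\binom{m}{j}(m-2j)$; the middle term $j=m/2$, present when $m$ is even, contributes zero.

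Next, use the identity $(m-j)\binom{m}{j}-j\binom{m}{j}=m\binom{m-1}{j}-m\binom{m-1}{j-1}$. It follows from $j\binom{m}{j}=m\binom{m-1}{j-1}$ and $(m-j)\binom{m}{j}=m\binom{m-1}{j}$. So $\binom{m}{j}(m-2j)=m\bigl(\binom{m-1}{j}-\binom{m-1}{j-1}\bigr)$, with the convention $\binom{m-1}{-1}=0$.

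Summing over $0\le j\le J$, where $J=\lceil m/2\rceil-1$ is the largest integer strictly less than $m/2$, the sum telescopes to $m\binom{m-1}{J}$. Therefore $S_m=2m\binom{m-1}{J}$.

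It remains to check that $\binom{m-1}{J}=\binom{m-1}{\lfloor m/2\rfloor}$. If $m$ is even, $J=m/2-1$ and $(m-1)-J=m/2=\lfloor m/2\rfloor$. If $m$ is odd, $J=(m-1)/2=\lfloor m/2\rfloor$. In both cases the identity follows from the symmetry of binomial coefficients.

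The only delicate step is this parity bookkeeping: handling the zero middle term when $m$ is even and matching the telescoped index to $\lfloor m/2\rfloor$. As a sanity check, $m=1$ gives $2=2\cdot1\cdot\binom{0}{0}$ and $m=2$ gives $4=2\cdot2\cdot\binom{1}{1}$.
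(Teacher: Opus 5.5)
Your proof is correct, and it takes a different route from the paper's.

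The paper argues by induction on $m$. It uses the identity $\frac{|i+1|+|i-1|}{2}=|i|+\mathbbm{1}_{\{0\}}(i)$ for integers $i$ to get the recursion that $\sum_{\sigma\in\{-1,1\}^{m+1}}|\sum_i\sigma_i|$ equals twice the corresponding sum for $m$ plus $2\,\#\{\sigma\in\{-1,1\}^m:\sum_i\sigma_i=0\}$. It then counts the zero-sum vectors ($\binom{m}{m/2}$ for even $m$, none for odd $m$). Finally it closes the induction with the identities $2m\binom{m-1}{m/2}+\binom{m}{m/2}=(m+1)\binom{m}{m/2}$ and $2m\binom{m-1}{(m-1)/2}=(m+1)\binom{m}{(m+1)/2}$, which needs a parity split in the inductive step.

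You instead evaluate the sum in closed form in one pass. Grouping by the number of negative signs and using the absorption identities, each summand $\binom{m}{j}(m-2j)$ becomes $m\bigl(\binom{m-1}{j}-\binom{m-1}{j-1}\bigr)$. The half-sum then telescopes, and the parity bookkeeping reduces to the single check $\binom{m-1}{\lceil m/2\rceil-1}=\binom{m-1}{\lfloor m/2\rfloor}$. Your telescoping, the convention $\binom{m-1}{-1}=0$, and the identification of $J$ are all handled correctly.

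What each route buys:
\begin{itemize}
\item Yours is shorter and non-inductive. It also derives the binomial coefficient $\binom{m-1}{\lfloor m/2\rfloor}$ rather than requiring the formula to be known in advance.
\item The paper's recursion has a transparent probabilistic meaning. Writing $W_m=\sum_{i=1}^m\sigma_i$ for uniform random signs, it says $\mathbb{E}|W_{m+1}|=\mathbb{E}|W_m|+\mathbb{P}(W_m=0)$, i.e.\ the expected distance of a simple random walk from the origin increases exactly by the probability of being at the origin.
\end{itemize}
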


\begin{proof}
    We will prove the lemma by induction on $m$. 
    % that \eqref{eq:Weighted-Expectation-lo-1}.
    % $$
    %     \sum_{\sigma \in \{-1,1\}^m} \left| \sum_{i=1}^m \sigma_i \right| = 2m \binom{m-1}{\lfloor m/2 \rfloor}.
    % $$
    
    If $m=1$, then 
    \begin{equation*}
        \sum_{\sigma\in \{-1,1\}}\left|\sigma\right|=2=2\binom{0}{0},
    \end{equation*}
    as claimed.
    Next, assume by induction that the claim is true for an integer $m\geq 1$. It is straightforward to check that if $i$ is an integer, then the identity
    \begin{equation*}
        \frac{|i+1|+|i-1|}{2}=|i|+\mathbbm{1}_{\{0\}}(i)
    \end{equation*}
    holds. Using this identity, we compute
    \begin{equation*}
        \begin{split}
            \sum_{\sigma\in\{-1,1\}^{m+1}}\left|\sum_{i=1}^{m+1}\sigma_i\right|&=\sum_{\sigma\in \{-1,1\}^m}\left(\left|\sum_{i=1}^{m}\sigma_i+1\right|+\left|\sum_{i=1}^{m}\sigma_i-1\right|\right)\\
            &=2\sum_{\sigma\in\{-1,1\}^{m}}\left|\sum_{i=1}^{m}\sigma_i\right|+2\cdot\#\left\{\sigma\in\{-1,1\}^m:\sum_{i=1}^{m}\sigma_i=0\right\},
        \end{split}
    \end{equation*}
    where $\#A$ denotes the cardinality of the set $A.$
    If $m$ is odd, then for every $\sigma\in\{-1,1\}^m$ the sum $\sum_{i=1}^{m}\sigma_i$ is nonzero. On the other hand, if $m$ is even, then an element $\sigma\in\{-1,1\}^{m}$ satisfies $\sum_{i=1}^{m}\sigma_i=0$ if and only if exactly $m/2$ of the components $\sigma_i$ take on the value $1$. This implies the formula
    \begin{equation*}
        \#\left\{\sigma\in\{-1,1\}^m:\sum_{i=1}^{m}\sigma_i=0\right\}=\mathbbm{1}_{2\mathbb{N}}(m)\cdot \binom{m}{\frac{m}{2}}.
    \end{equation*}
    Putting everything together with the inductive assumption, we have
    \begin{equation*}
        \begin{split}
        \sum_{\sigma\in\{-1,1\}^{m+1}}\left|\sum_{i=1}^{m+1}\sigma_i\right|&=2\left(2m\binom{m-1}{\lfloor \frac{m}{2}\rfloor}+\mathbbm{1}_{2\mathbb{N}}(m)\cdot \binom{m}{\frac{m}{2}}\right)\\
        &=2\left(\mathbbm{1}_{2\mathbb{N}}(m)\cdot \left(2m\binom{m-1}{\frac{m}{2}}+\binom{m}{\frac{m}{2}}\right)+\mathbbm{1}_{2\mathbb{N}}(m+1)\cdot 2m\binom{m-1}{\frac{m-1}{2}}\right)\\
        &=2\left(\mathbbm{1}_{2\mathbb{N}}(m)\cdot (m+1)\binom{m}{\frac{m}{2}}+\mathbbm{1}_{2\mathbb{N}}(m+1)\cdot (m+1)\binom{m}{\frac{m+1}{2}}\right)\\
        &=2(m+1)\binom{m}{\lfloor \frac{m+1}{2}\rfloor},
        \end{split}
    \end{equation*}
    which is what we need.
    %Using the identity for any integer $i$,
    %\(
    %    \frac{|i+1|+|i-1|}{2}=|i|+\mathbf{1}_{\{i=0\}},
    %\)
    %we write
    %\begin{align*}
     %   R_m 
      %  &=\sum_{\sigma \in \{-1,1\}^{m-1}} \left(\left| \sum_{i=1}^{m-1} \sigma_i +1\right|+\left| \sum_{i=1}^{m-1} \sigma_i -1\right|\right) 
       % \\
       % &=2\sum_{\sigma \in\{-1,1\}^{m-1}}\left| \sum_{i=1}^{m-1} \sigma_i \right|
        %+2 \cdot \#\left\{\sigma \in\{-1,1\}^{m-1}: \sum_{i=1}^{m-1} \sigma_i=0\right\}. 
    %\end{align*}
    %If $m-1$ is odd, then the second term vanishes. If $m-1$ is even, then the condition
    %\(\sum_{i=1}^{m-1}\sigma_i=0\) means that $\sigma$ has exactly $(m-1)/2$ entries equal to $1$, and hence the second term equals
    %\(2 \binom{m-1}{(m-1)/2}\).
    %Using the inductive assumption, the claimed formula of $R_m$ follows by a direct computation.
\end{proof}

\begin{lemma} \label{lem:Weighted-Expectation-lo}
    For $a \in \R^m$, set $ a^* := (\| a \|_1 /m, \ldots, \| a \|_1 /m)$. Then,
    \begin{equation}
        \label{eq:Weighted-Expectation-lo}
        \mathbb{E}_\sigma|\langle a, \sigma\rangle|\geq \mathbb{E}_\sigma|\langle a^*, \sigma\rangle| = \frac{\|a\|_1}{2^{m-1}} \binom{m-1}{\lfloor \frac{m}{2} \rfloor}.
    \end{equation}
\end{lemma}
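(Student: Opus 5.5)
The plan is to exploit that $\phi(a):=\mathbb{E}_\sigma|\langle a,\sigma\rangle|$ is convex and invariant under coordinate sign changes and permutations, so that symmetrization over these symmetries lowers nothing and lands exactly at $a^*$. Since both sides depend only on $|a_1|,\dots,|a_m|$ (replacing $a$ by $\delta a$ with $\delta\in\{-1,1\}^m$ just reindexes the average over $\sigma$), we may first assume $a\in\mathbb{R}^m_+$. The case $a=0$ is trivial, so take $\|a\|_1>0$.

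For the inequality, note that $\phi$ is convex, being an average of the convex functions $a\mapsto|\langle a,\sigma\rangle|$. It is also invariant under permutations of coordinates, since permuting $a$ amounts to permuting $\sigma$, and the uniform measure on $\{-1,1\}^m$ is permutation invariant. Averaging over the symmetric group $S_m$ gives
\begin{equation*}
\frac{1}{m!}\sum_{\pi\in S_m} a_{\pi} = \Big(\frac{1}{m}\sum_{i=1}^m a_i,\dots,\frac{1}{m}\sum_{i=1}^m a_i\Big)=a^*,
\end{equation*}
where $a_\pi=(a_{\pi(1)},\dots,a_{\pi(m)})$ and the last equality uses $a\in\mathbb{R}^m_+$. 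Jensen's inequality then yields
\begin{equation*}
\phi(a^*)\le \frac{1}{m!}\sum_{\pi\in S_m}\phi(a_\pi)=\phi(a).
\end{equation*}
If $a$ has coordinates of mixed signs, we first pass to $|a|=(|a_1|,\dots,|a_m|)$, which has the same $\phi$ value and the same $\|\cdot\|_1$, so $a^*$ is unchanged.

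For the value at $a^*$, write $a^*=\frac{\|a\|_1}{m}(1,\dots,1)$. Then
\begin{equation*}
\phi(a^*)=\frac{\|a\|_1}{m}\cdot\frac{1}{2^m}\sum_{\sigma\in\{-1,1\}^m}\Big|\sum_{i=1}^m\sigma_i\Big| .
\end{equation*}
Lemma~\ref{lem:Weighted-Expectation-lo-1} evaluates the sum as $2m\binom{m-1}{\lfloor m/2\rfloor}$, so
\begin{equation*}
\phi(a^*)=\frac{\|a\|_1}{m}\cdot\frac{2m}{2^m}\binom{m-1}{\lfloor \frac m2\rfloor}=\frac{\|a\|_1}{2^{m-1}}\binom{m-1}{\lfloor \frac m2\rfloor},
\end{equation*}
which is the claimed identity.

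No step is a real obstacle. The only points needing care are the reduction to the nonnegative orthant, which is needed so that the permutation average equals $a^*$ rather than a vector with cancellations, and checking the permutation invariance of $\phi$. Both follow from the symmetry of the uniform measure on $\{-1,1\}^m$.
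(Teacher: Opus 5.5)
Your proof is correct and follows the paper's idea: reduce to $a\in\mathbb{R}^m_+$, use convexity and permutation invariance of $\phi$ to compare with $a^*$, then evaluate $\phi(a^*)$ via Lemma~\ref{lem:Weighted-Expectation-lo-1}. The only difference is that you average over all of $S_m$ at once with Jensen's inequality. The paper instead iterates: it replaces $a$ by a convex combination of $a$ and a copy with two coordinates transposed, fixing one coordinate at $\|a\|_1/m$ per step. Your one-shot symmetrization is a tidier version of the same argument.
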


\begin{proof}
    Just as we did in the proof of Lemma~\ref{lem:Weighted-Expectation}, we assume that $a\in\mathbb{R}^m_+$. Set $S:=\|a\|_1/m$, and for $x\in\mathbb{R}^m$ define
    \[
        g(x) = \sum_{\sigma \in \{-1,1\}^m} \left| \sum_{i=1}^m \sigma_i x_i \right|.
    \]
    We want to show that $g(a)\geq g(a^*)$. If $a=a^*$, then we are done, so assume that $a\neq a^*$. Then, there exist indices $i$ and $j$ such that $a_i<S<a_j$. Since the function $g$ is invariant under permutations of coordinates, no generality is lost if we assume that $i=1$ and $j=2$. Next, choose $t\in (0,1)$ such that $S=ta_1+(1-t)a_2$, and write
    \begin{equation*}
        (S,ta_2+(1-t)a_1,a_3,\dots,a_m)=t(a_1,a_2,a_3,\dots,a_m)+(1-t)(a_2,a_1,a_3,\dots,a_m).
    \end{equation*}
    Using the convexity and permutation invariance of $g$, we compute
    \begin{equation*}
        g(S,ta_2+(1-t)a_1,a_3,\dots,a_m)\leq tg(a)+(1-t)g(a_2,a_1,a_3,\dots,a_m)=g(a).
    \end{equation*}
    
    In summary, we have found a vector 
    \begin{equation*}
        \alpha^{1}:=(S,ta_2+(1-t)a_1,a_3,\dots,a_m)
    \end{equation*}
    that satisfies the bound $g(\alpha^1)\leq g(a)$, whose first coordinate is equal to $S$, and $\|\alpha^1\|_1=\|a\|_1$. If $\alpha^1=a^*$, then we are done. Otherwise, repeat the same process to arrive at another vector $\alpha^2$ that satisfies the bound $g(\alpha^2)\leq g(\alpha^1)$, whose first two coordinates are equal to $S$, and $\|\alpha^2\|_1=\|a\|_1$. Repeating this process for at most $m$ iterations, we eventually arrive at the vector $\alpha^k=a^*$ that satisfies the bound $g(\alpha^k)\leq g(\alpha^{k-1})$. We see that
    \begin{equation*}
        g(a)\geq g(\alpha^{1})\geq \dots\geq g(\alpha^{k-1})\geq g(a^*),
    \end{equation*}
    which is what we want. Use Lemma~\ref{eq:Weighted-Expectation-lo-1} to compute
    \begin{equation}
        g(a)\geq g(a^*) = \frac{\|a\|_1}{m} g(1,\dots,1) = 2\|a\|_1 \binom{m-1}{\lfloor \frac{m}{2} \rfloor}. \qedhere
    \end{equation}
    %Without loss of generality, we assume that $a \in \mathbb{R}^m_+$ and set $S = %\|a\|_1 /m$. Define
   % \[
    %    g(x) = \sum_{\sigma \in \{-1,1\}^m} \left| \sum_{i=1}^m \sigma_i x_i \right|.
   % \]
    %If $a \neq a^*$, then, after relabeling the coordinates, we may assume that $ a_1 < S < a_2 .$ Choose $t \in (0,1)$ so that $t a_1 +  (1-t) a_2 = S $.
    %Using convexity of $g$ and its symmetry under permutations of the coordinates, we obtain
    %\begin{align*}
    %    g(S, ta_2 + (1-t)a_1, \ldots,a_m  ) 
    %    &\leq 
    %    t g(a) +(1-t)g(a_2, a_1,  \ldots, a_m) =g(a).
    %\end{align*}
    %Note that this replacement $ a\mapsto (S, ta_2 + (1-t)a_1, \ldots,a_m  )$ preserves the sum of the coordinates. Iterating this step yields 
    % \(
    %     a^*=\left(S,\ldots,S\right),
    % \)
    % and therefore 
    %\begin{align*}
    %    g(a)\geq g(a^*) &= S \sum_{\sigma \in \{-1,1\}^m} \left| \sum_{i=1}^m \sigma_i \right| = 2\|a\|_1 \binom{m-1}{\lfloor \frac{m}{2} \rfloor},
    %\end{align*}
    %where we provide the last computation in Lemma~\ref{eq:Weighted-Expectation-lo-1}.
\end{proof}

We recall that the surface area measure $S_K$ of a nonempty compact convex set $K$ in $\mathbb{R}^m$ is a Borel measure on the unit sphere $\s^{n-1}$ defined as follows: for $E \subset \s^{n-1}, S_K(E)$ equals the volume of the part of the boundary $\partial K$ where normal vectors belong to $E$ ($S_K$ is the pushforward of the $(n-1)$-dimensional Hausdorff measure on $\partial K$ via the Gauss map $\left.\nu_K: \partial K \rightarrow \s^{n-1}\right)$. We recall the \emph{Cauchy formula}. For a convex body $K \subset \R^m$ and for any $ u\in \s^{m-1} $, we have
    \begin{equation}
        \label{eq:Cauchy-polytope-projection-hyperplane-smooth}
        \left|{P}_{u^{\perp}}K\right|= \frac{1}{2} \int_{\s^{m-1}}|\langle u, \xi\rangle| \;{d} \,S_K(\xi).
    \end{equation}
    The volume of the orthogonal projection of a polytope onto a hyperplane can be expressed explicitly as a sum taken over the facets of the polytope. More precisely, let $\mathcal{F}_K$ be the set of facets of $K$ and $n(F)$ be the unit outer normal vector to $F \in \mathcal{F}_K$. Then,
    \begin{equation}
        \label{eq:Cauchy-polytope-projection-hyperplane}
        |P_{u^\perp} K| =\frac{1}{2} \sum_{F \in \mathcal{F}_K} \left|F\right| | \langle u , n(F) \rangle |.
    \end{equation}

\begingroup
\renewcommand{\MainTheoremExtraTwo}{%
If $K$ is a convex polytope whose unit facet normals in $\mathbb{R}^{n+1}$ are given by $v_1,\dots,v_N$, choose $\varepsilon\in\{-1,1\}^{n+1}$ so that $u=\varepsilon |u|$, where $|u|$ is the vector with coordinates $|u_j|$, and set $a_i:=(|u_1|(v_i)_{1},\dots,|u_{n+1}|(v_i)_{n+1})$ for each $i$. Then, there is equality in the upper bound if and only if for each $i$ the quantity $|\langle a_i,\varepsilon\rangle|$ is minimized, and either $a_i$ has at most two positive coordinates and the corresponding coordinates of $\varepsilon$ have different signs from each other, or $a_i$ has at least three positive coordinates, and among the corresponding coordinates of $\varepsilon$ there is $\varepsilon_{n_i}$ whose sign is different from the signs of the other such coordinates, and the bound
    \begin{equation*}
        2|u_{n_i}|(v_i)_{n_i}\geq \sum_{j=1}^{n+1}|u_j|(v_i)_j
    \end{equation*}
    is satisfied. The left inequality is sharp.
}
\ProjectionIneqHyperplane*
\endgroup

% \begin{prop} \label{prop:Projection-ineq-hyperplane}
%     For any $u \in \s^n$ and for any 1-unconditional convex body $K \subset \R^{n+1}$, we have
%     \begin{equation}
%         \label{eq:Projection-ineq-hyperplane}
%         \binom{n}{\lfloor \frac{n+1}{2} \rfloor} |P_{u^\perp} ( K \cap \R^{n+1}_+ )| \leq |P_{u^\perp} K| \leq 2^n |P_{u^\perp} ( K \cap \R^{n+1}_+ )|.
%     \end{equation}
%     If $K$ is a convex polytope whose unit facet normals in $\mathbb{R}^{n+1}$ are given by $v_1,\dots,v_N$, choose $\varepsilon\in\{-1,1\}^{n+1}$ so that $u=\varepsilon |u|$, where $|u|$ is the vector with coordinates $|u_j|$, and set $a_i:=(|u_1|(v_i)_{1},\dots,|u_{n+1}|(v_i)_{n+1})$ for each $i$. Then the upper bound is equality if and only if for each $i$ the quantity $|\langle a_i,\varepsilon\rangle|$ is minimized, and either $a_i$ has at most two positive coordinates and the corresponding coordinates of $\varepsilon$ have different signs from each other, or $a_i$ has at least three positive coordinates, and among the corresponding coordinates of $\varepsilon$ there is $\varepsilon_{n_i}$ whose sign is different from the signs of the other such coordinates, and the bound
%     \begin{equation*}
%         2|u_{n_i}|(v_i)_{n_i}\geq \sum_{j=1}^{n+1}|u_j|(v_i)_j
%     \end{equation*}
%     is satisfied.
    
% \end{prop}

\begin{proof}
    We will prove the result for $1$-unconditional convex polytopes, and the general case will follow by approximation. Suppose that $K\subset\mathbb{R}^{n+1}$ is an unconditional convex polytope. Then, $v$ is a unit facet normal of $K$ if and only if  $|v|$ is a unit facet normal of $K$. It follows that $K$ is completely determined by its unit facet normals $v_1,\dots,v_N$ that lie in $\mathbb{R}^{n+1}_+$, and their reflections into the other orthants of $\mathbb{R}^{n+1}$.

    For any $1\le i\le N $, we denote by $F_i = \{x \in K\cap \R^{n+1}_+: \langle x,v_i \rangle = h_K(v_i)\}$ the intersection of the facets of $K$ with the positive orthant. An arbitrary direction of $\mathbb{S}^n$ is given by $\varepsilon u :=(\varepsilon_1u_1, \ldots, \varepsilon_{n+1}u_{n+1})$, where $\varepsilon\in \{-1,1\}^{n+1}$, and $u\in \mathbb{S}^n\cap \mathbb{R}^{n+1}_+$. For each $1\leq i\leq N$, define the vector $a_i:=(u_1(v_i)_1,\dots,u_{n+1}(v_i)_{n+1})$. For $\sigma\in \{-1,1\}^{n+1}$, we have $\langle \varepsilon u,\sigma v_i\rangle=\langle \varepsilon\sigma,a_i\rangle$. Then, using \eqref{eq:Cauchy-polytope-projection-hyperplane}, \eqref{eq:Weighted-Expectation-up}, and the fact that the vectors $\varepsilon \sigma$ range over $\{-1,1\}^{n+1}$ as $\sigma$ ranges over $\{-1,1\}^{n+1}$, we compute
    %Let $F_1,\ldots,F_N$ be the sets of the form $F \cap \R^{n+1}_+$ where $F$ is a facet of $K$ whose relative interiors intersect $\mathbb{R}^{n+1}_+$, and let $v_1,\ldots,v_N$ be the corresponding outer unit normal vectors of these facet of $K$.
    %Since $K$ is unconditional, $v_1, \dots, v_N \in \mathbb{R}_{+}^{n+1}$. Let $u \in \R^{n+1}_+$ and $\varepsilon \in \{-1,1\}^{n+1} $. Using \eqref{eq:Cauchy-polytope-projection-hyperplane} and \eqref{eq:Weighted-Expectation-up}, we obtain
    \begin{equation}
        \label{eq:Rogers-Shephard-ineq-p-zonotope-3}
        \begin{split}
            |P_{(\varepsilon u)^\perp} K| 
            &= 
            \frac{1}{2} \sum_{\sigma \in \{-1,1\}^{n+1}} \sum_{i=1}^N |F_i| | \langle \varepsilon u , \sigma v_i \rangle |
            =\frac{1}{2}\sum_{i=1}^{N}\left(|F_i|\sum_{\sigma\in \{-1,1\}^{n+1}}|\langle a_i,\varepsilon\sigma\rangle| \right)
            % = \frac{1}{2} \sum_{i=1}^N \left(|F_i| \sum_{\sigma}  | u \cdot \sigma v_i| \right) 
            \\
            &=2^n\sum_{i=1}^{N}|F_i|\cdot\mathbb{E}_{\sigma}|\langle a_i,\sigma\rangle|\leq2^{n-1}\sum_{i=1}^{N}|F_i|\left(\|a_i\|_1+\min_{\tau\in \{-1,1\}^{n+1}}|\langle a_i,\tau\rangle|\right)\\
            &= 
            2^{n-1}\sum_{i=1}^N |F_i| \left( \langle u, v_i \rangle +\min_{\tau \in \{-1,1\}^{n+1}} | \langle \varepsilon u, \tau v_i \rangle | \right).
        \end{split}
    \end{equation}
    The polytope $K\cap \mathbb{R}^{n+1}_+$ is the intersection of $K$ with the closed half-spaces $\{x:x_i\geq 0\}$. That is, $K\cap \mathbb{R}^{n+1}_+$ is the convex polytope generated by the outer unit  normals $v_1,\dots,v_N$ as well as $-e_1,\dots,-e_{n+1}$. Since $u\in \mathbb{S}^n\cap\mathbb{R}^{n+1}_+$, it follows from a classical formula, see for example \cite[Lemma 5.1.1]{S-93}, that
    %On the other hand, the facet of $K \cap \mathbb{R}_{+}^{n+1}$ can be described into two kinds. First, there are facets inherited from $K$, precisely $v_1, \ldots, v_N$. Second, new facets arise from the intersection with the coordinate half-spaces $\left\{x_i \geq 0\right\}$. These facets lie in the coordinate hyperplanes and have outer unit normals \(-e_i\) for $i=1,\ldots,n+1$. Thus, 
    \begin{equation}
    \label{eq:Rogers-Shephard-ineq-p-zonotope-2}
        \sum_{i=1}^N |F_i|  \langle  u , v_i \rangle   
        = 
        % |P_{u^\perp} (K \cap \R^n_+)| = 
        \sum_{i=1}^{n+1} u_i |P_{e_i^{\perp}} (K \cap \R^{n+1}_+)|.
    \end{equation}
    Using \eqref{eq:Cauchy-polytope-projection-hyperplane} and \eqref{eq:Rogers-Shephard-ineq-p-zonotope-2}, we obtain
    \begin{equation}
        \label{eq:Rogers-Shephard-ineq-p-zonotope-4}
        \begin{split}
            |P_{(\varepsilon u)^\perp} (K \cap \R^{n+1}_+)| 
            &= 
            \frac{1}{2} \left( \sum_{i=1}^N |F_i| | \langle \varepsilon u ,  v_i \rangle |  +  \sum_{i=1}^{n+1} u_i |P_{e_i^{\perp}} (K \cap \R^{n+1}_+)| \right)
            \\
            &=
            \frac{1}{2} \left( \sum_{i=1}^N |F_i| | \langle \varepsilon u ,  v_i \rangle |  + \sum_{i=1}^N |F_i|  \langle  u , v_i \rangle  \right).
        \end{split}
    \end{equation}
    Combine \eqref{eq:Rogers-Shephard-ineq-p-zonotope-3}, \eqref{eq:Rogers-Shephard-ineq-p-zonotope-4}, and the fact that 
        \begin{equation*}
            |\langle \varepsilon u,v_i\rangle|\geq\min_{\tau\in\{-1,1\}^{n+1}}|\langle\varepsilon u,\tau v_i\rangle|
        \end{equation*}
    to complete the proof of the upper bound.
    
    % To clarify why it is $L_p$- sum of $n+1 $ segments,  observe that $$Z = P_{e_n^\perp} (B^{n+1}_q \cap \R^{n+1}_{+}) = B_q^{n} \cap \R^n_+ .$$
    To prove the lower bound, define $a_i$ as before and use \eqref{eq:Cauchy-polytope-projection-hyperplane}, \eqref{eq:Weighted-Expectation-lo}, and \eqref{eq:Rogers-Shephard-ineq-p-zonotope-4} to get
    \begin{equation}
        \begin{split}
            |P_{(\varepsilon u)^\perp} K| 
            &= 
            2^{n}\sum_{i=1}^{N}|F_i|\cdot\mathbb{E}_{\sigma}|\langle a_i,\sigma\rangle|\geq \binom{n}{\lfloor\frac{n+1}{2}\rfloor}\sum_{i=1}^{N}|F_i|\cdot\|a_i\|_1 \\
            &
            = \binom{n}{\lfloor \frac{n+1}{2} \rfloor} \sum_{i=1}^N |F_i| \langle u, v_i \rangle
            \\
            &\geq \binom{n}{\lfloor \frac{n+1}{2} \rfloor}
            |P_{(\varepsilon u)^\perp} (K \cap \R^{n+1}_+)|.
        \end{split}
    \end{equation}
    Note that the left inequality is sharp when $K = B_1^{n+1}$ with direction $\varepsilon u= (\frac{1}{\sqrt{n+1}},\ldots,\frac{1}{\sqrt{n+1}})$. Indeed, since there is only one facet, all components of $a$ are the same. This implies that the first inequality is equality. The second is also equality since $\varepsilon = (1,\ldots,1)$.
    % We note that equality can occur if and only if above inequalities are in fact equalities. In particular, this forces $\varepsilon \in\mathbb{R}^n_+$ and, by the equality conditions in \eqref{eq:Weighted-Expectation-up}, we must have $ u_j (v_i)_j =S $ for some constant $S$ that is independent of both $i$ and $j$. Consequently, all $v_i$ coincide, so $N=1$. In particular, $u$ is proportional to the coordinate-wise reciprocal of $v_1$  and $K$ is a dilation of the crosspolytope.

    Next, we will characterize the equality conditions for the upper bound. Suppose that the upper bound is an equality. Then, carefully observing the proof of the upper bound, we see from \eqref{eq:Rogers-Shephard-ineq-p-zonotope-3} that
    \begin{equation}\label{eq:rademacher_lp_hyperplane_theorem_equality}
        \mathbb{E}_{\sigma}|\langle a_i,\sigma\rangle|=\frac{1}{2}\left(\|a_i\|_1+\min_{\tau\in\{-1,1\}^{n+1}}|\langle a_i,\tau\rangle|\right)
    \end{equation}
    holds for each $1\leq i\leq N$, and from \eqref{eq:Rogers-Shephard-ineq-p-zonotope-3} and \eqref{eq:Rogers-Shephard-ineq-p-zonotope-4} we find that 
    \begin{equation}\label{eq:minimizing_reflection}
        |\langle a_i,\varepsilon\rangle|=\min_{\tau\in\{-1,1\}^{n+1}}|\langle a_i,\tau\rangle|
    \end{equation}
    is satisfied for each $i$. We showed in Lemma~\ref{lem:Weighted-Expectation} that if $\varepsilon$ satisfies \eqref{eq:rademacher_lp_hyperplane_theorem_equality}, and \eqref{eq:minimizing_reflection} is satisfied as well, then either $a_i$ has at most two positive coordinates and the corresponding coordinates of $\varepsilon$ have different signs, or $a_i$ has at least three positive coordinates and among the corresponding coordinates of $\varepsilon$ there is $\varepsilon_{n_i}$ whose sign is different from the signs of the other coordinates, and the bound
    \begin{equation*}
        2|u_{n_i}|(v_i)_{n_i}\geq \sum_{j=1}^{n+1}|u_j|(v_i)_j
    \end{equation*}
    is satisfied. These are exactly the conditions we need.

    We move on to prove the other direction. If we assume that the conditions are true, then we automatically have \eqref{eq:minimizing_reflection} for each $i$. We also notice that the conditions along with \eqref{eq:minimizing_reflection} are exactly the hypothesis of Lemma~\ref{lem:Weighted-Expectation} with $a_i$ in place of $a$, so we find immediately that \eqref{eq:rademacher_lp_hyperplane_theorem_equality} is true. We have established that both \eqref{eq:minimizing_reflection} and \eqref{eq:rademacher_lp_hyperplane_theorem_equality} are satisfied, which implies that the upper bound is an equality.
\end{proof}

Using Theorem~\ref{thm: equivalent-RS-Proj}, we obtain a particular case of the $L_p$-Rogers--Shephard inequality. For the reverse inequality, we make an improvement that depends on $p$. The following lemma will help us to characterize the equality conditions.
% The constant appearing below is sharp in the endpoint case $p=\infty$. We suspect that the constant in the left-hand inequality can be improved when one allows it to depend on
% $p$. However, our method is specifically designed for the $1$-unconditional setting in the Conjecture~\ref{conj:projection-cap-conj}, and for this reason it does not yield a $p$-dependent constant.

\begin{lemma}\label{lemma:at_most_two_postive_components}
    Let $u\in \mathbb{S}^n\cap \mathbb{R}^{n+1}_+$ be a direction, and suppose that there exists $\varepsilon\in \{-1,1\}^{n+1}$ such that 
    \begin{equation*}
        |\langle \varepsilon u, x\rangle|=\min_{\tau\in \{-1,1\}^{n+1}}|\langle \varepsilon u,\tau x\rangle|
    \end{equation*}
    holds for almost every $x\in\mathbb{R}^{n+1}_+$. Then, there exist at most two indices $i$ and $j$ such that the components $u_i$ and $u_j$ are positive.
\end{lemma}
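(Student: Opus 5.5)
We argue by contradiction. Assume that at least three components of $u$ are positive, say $u_1,u_2,u_3>0$ after relabeling. We will exhibit a set of positive Lebesgue measure in $\mathbb{R}^{n+1}_+$ on which the minimality hypothesis fails for the fixed $\varepsilon$.

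The hypothesis holds almost everywhere, so it is enough to find one point $x\in\mathbb{R}^{n+1}_+$ with all coordinates positive and one $\tau$ satisfying the strict inequality
\[
|\langle \varepsilon u,\tau x\rangle|<|\langle \varepsilon u,x\rangle|.
\]
Both sides are continuous in $x$, so this strict inequality then persists on a small open ball around $x$. That ball lies in $\mathbb{R}^{n+1}_+$ and has positive measure, which gives the contradiction.

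Set $b_i:=u_ix_i\ge 0$. Then $\langle \varepsilon u,\tau x\rangle=\sum_i \varepsilon_i\tau_i b_i$. As $\tau$ ranges over $\{-1,1\}^{n+1}$, so does the product $\varepsilon\tau$. Hence the hypothesis says that, for almost every $x$, the fixed sign pattern $\varepsilon$ minimizes $\bigl|\sum_i s_i b_i\bigr|$ over all $s\in\{-1,1\}^{n+1}$.

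We take the last $n-2$ coordinates $x_4,\dots,x_{n+1}$ very small and positive, so the corresponding $b_i$ are negligible. Among $\varepsilon_1,\varepsilon_2,\varepsilon_3$, two entries have the same sign; say $\varepsilon_1=\varepsilon_2$. We choose $x_1,x_2,x_3>0$ so that $b_1=b_2=1$ and $b_3=2$. This is possible because $u_1,u_2,u_3>0$.

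Since $\varepsilon_1=\varepsilon_2$, the first three terms give
\[
\varepsilon_1 b_1+\varepsilon_2 b_2+\varepsilon_3 b_3=\pm 2\pm 2,
\]
which is either $0$ or $\pm 4$.

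In the case $\pm 4$, the signs $(1,1,-1)$ give $0$ on the first three terms. Adding the negligible tail, this makes $\varepsilon$ strictly non-minimal.

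In the case $0$, we change the choice and take $b_1=b_2=1$, $b_3=3$. Now the first three terms of $\varepsilon$ give $\pm 5$ or $\pm 1$. The signs $(1,1,-1)$ give magnitude $1$. So $\varepsilon$ can only be minimal if it gives $\pm1$. We then switch to the choice $b_1=b_2=1$, $b_3=1/2$. There the previous sign pattern of $\varepsilon$ gives $\pm 3/2$, while $(1,-1,1)$ gives $1/2$.

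More generally, the sign pattern of $\varepsilon$ restricted to $\{1,2,3\}$ is fixed independently of $x$. For every fixed pattern, some choice of positive $(b_1,b_2,b_3)$ makes a different pattern strictly smaller. For each case of $(\varepsilon_1,\varepsilon_2,\varepsilon_3)$ up to a global sign, we exhibit such $b$ explicitly:
\begin{itemize}
\item if all three signs agree, any positive $b$ works;
\item if exactly one sign differs, say $\varepsilon_3$, take $b=(1,1,1/2)$.
\end{itemize}
We then choose the tail small enough that it cannot close the strict gap.

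The main obstacle is organizing this case analysis cleanly. We must make sure that, for each fixed $\varepsilon$ pattern on the three positive indices, an open set of $x$ witnesses failure. The continuity and small-tail argument takes care of the remaining coordinates.
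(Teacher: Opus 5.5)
Your argument is correct and is essentially the paper's: pigeonhole gives two positive components with $\varepsilon_1=\varepsilon_2$, you exhibit a point of the open orthant where flipping one of these signs strictly lowers $|\langle \varepsilon u,\tau x\rangle|$, and continuity turns this into a failure set of positive measure (the paper does the same near $e_i+e_j$, with explicit $\delta(t)$ estimates in place of continuity). The case split on $\varepsilon_3$ and the dead-end choice $b=(1,1,3)$ can be deleted: once $\varepsilon_1=\varepsilon_2$, taking $b_1=b_2=1$ and putting $x_3$ into the negligible tail already works, and this pair version also gives the stronger fact, used later in Proposition~\ref{prop:projection_hyperplane_lq_ball}, that any two positive components of $u$ carry opposite signs of $\varepsilon$.
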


\begin{proof}
    Assume for contradiction that there exist distinct indices $i$ and $j$ for which $u_i$ and $u_j$ are positive, and $\varepsilon_i=\varepsilon_j$. For $t\in(0,1)$ define
    \begin{equation}
        \label{eq:rs-zp-0001}
        \delta(t) := \frac{(u_i +u_j)t}{\|u\|_1}.
    \end{equation}
     If $x \in \R^{n+1}_+$ satisfies $\|x - e_i -e_j \|_\infty \leq \delta(t)$, then 
     \begin{align}\label{eq:delta_bounds}
        |x_i-1|\leq \delta(t), \qquad |x_j-1|\leq \delta(t),\qquad \textup{and} \qquad |x_k|\leq \delta(t),
     \end{align} 
     for any $k\notin \{i,j\}$. Use the triangle inequality and \eqref{eq:delta_bounds} to compute
     \begin{equation*}
         \begin{split}
             |\langle \varepsilon u,x\rangle|&=\left|u_ix_i+u_jx_j+\sum_{k\notin \{i,j\}}\varepsilon_i \varepsilon_kx_k\right|\geq|u_ix_i+u_jx_j|-\left|\sum_{k\notin \{i,j\}}\varepsilon_i \varepsilon_ku_kx_k\right|\\
             &\geq u_ix_i+u_jx_j-\sum_{k\notin \{i,j\}}u_k|x_k|\geq (u_i+u_j)(1-\delta(t))-\delta(t)\sum_{k\notin \{i,j\}}u_k\\
             &=(u_i+u_j)-\delta(t)\sum_{k=1}^{n+1}u_k=(1-t)(u_i+u_j).
         \end{split}
     \end{equation*}
     where the last equality we used the definition \eqref{eq:rs-zp-0001}.
      %\begin{align}
    %    |\langle \varepsilon u ,x \rangle| = |u_i x_i +u_j x_j + \sum_{k \notin\{ i,j\}} \varepsilon_k u_k x_k| \geq (u_i + u_j) (1-\delta) - \delta\sum_{k \neq i,j}  u_k  \overset{\eqref{eq:rs-zp-0001}}{=} (u_i + u_j)(1-t).
    %\end{align}
    %This immediately implies that \(u\) has at most two nonzero coordinates. Indeed, if \(u\) had three nonzero coordinates, say \(u_i,u_j,u_k>0\), then \(\varepsilon_i,\varepsilon_j,\varepsilon_k\in\{-1,1\}\) would have to be pairwise distinct, which is impossible. Now we prove the claim. 
    % $ t = \frac{1}{2} \left(1- \frac{|u_i-u_j|}{u_i+u_j}\right) \in (0,1/2)$ and 
    %and $t$ will be chosen later.
    Next, choose $\tau \in \{-1,1\}^{n+1}$ such that $\tau_i = 1$ and $\tau_j =-1$. Then, using the bounds \eqref{eq:delta_bounds} we get
    \begin{align}
        |\langle \tau u ,x \rangle| &= \left| u_i x_i -u_j x_j + \sum_{k \notin\{ i,j\}} \tau_k u_k x_k \right| 
        \\
        &\leq \left|
        u_i x_i -u_j x_j\right| + \delta(t)\sum_{k \notin\{i,j\}}  u_k  
        \\
        &\leq |u_i -u_j| +u_i |x_i-1| +u_j|1-x_j| + \delta(t)\sum_{k \notin\{i,j\}}  u_k 
        \\
        &\leq |u_i -u_j| + \delta(t)\sum_{k =1}^{n+1}  u_k 
        \\
        &= |u_i -u_j| + t(u_i+u_j).
    \end{align}
    Finally, we pick $t$ to satisfy
    \begin{equation*}
        0<t<\frac{1}{2}\left(1-\frac{|u_i-u_j|}{u_i+u_j}\right),
    \end{equation*}
    and we use the above computations to find that
    \begin{equation*}
        |\langle \tau u,x\rangle|\leq |u_i-u_j|+t(u_i+u_j)<(1-t)(u_i+u_j)\leq |\langle \varepsilon u,x\rangle|,
    \end{equation*}
    which contradicts to the assumption of the lemma. 
    %\eqref{eq:Rogers-Shephard-ineq-p-zonotope-6-smooth}.
    As a consequence, if $i$ and $j$ are distinct indices for which $u_i$ and $u_j$ are positive, then $\varepsilon_i\neq \varepsilon_j$.

    We conclude the lemma with the following argument. Suppose that there exist three distinct indices $i$, $j$, and $k$ such that the components $u_i$, $u_j$, and $u_k$ are positive. We just proved that $\varepsilon_i\neq \varepsilon_j$, $\varepsilon_i\neq \varepsilon_k$, and $\varepsilon_j\neq \varepsilon_k$. The first two inequalities force $\varepsilon_j=\varepsilon_k=-\varepsilon_i$, which is a contradiction. As a result, no such three points exist, and we have proved the lemma.
\end{proof}

Next, we consider the particular case $ K= B_q^{n+1} $ of Proposition~\ref{prop:Projection-ineq-hyperplane} to obtain the equality conditions needed for the $L_p$-Rogers-Shephard inequality. We also make an improvement on the lower bound that depends on $q$.
\begin{prop}\label{prop:projection_hyperplane_lq_ball}
    If $1<q<\infty$ is fixed and $v\in\mathbb{S}^n$ is a direction, then
    \begin{equation}\label{eq:hyperplane_projection_lq_ball}
         c_{q,n+1}|P_{v^{\bot}}(B_q^{n+1}\cap \mathbb{R}^{n+1}_+)|_n\leq |P_{v^{\bot}}B_q^{n+1}|_n\leq 2^n|P_{v^{\bot}}(B_q^{n+1}\cap \mathbb{R}^{n+1}_+)|_n,
    \end{equation}
    where 
    \begin{equation}
        \label{const:hyperplane_projection_lq_ball_low}
        c_{q,n+1} := \frac{2^n \left|P_{\theta^\perp}B_q^{n+1}\right|_n}{\sqrt{n+1}|B_q^n|}, 
        \qquad
        \theta=\left(\frac{1}{\sqrt{n+1}},\ldots,\frac{1}{\sqrt{n+1}}\right).
        % \frac{2^{n}|P_{(\frac{1}{\sqrt{n+1}},\ldots,\frac{1}{\sqrt{n+1}})^\perp} B_q^{n+1}|_n}{\sqrt{n+1}|B_q^n|}.
    \end{equation}
    Equality holds if and only if either $v$ has exactly one nonzero component, or $v$ has exactly two nonzero components with different signs. The left inequality is sharp, with equality attained at $v = \theta$.
\end{prop}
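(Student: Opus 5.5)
The plan is to treat the two inequalities separately. Throughout I use the Cauchy formula \eqref{eq:Cauchy-polytope-projection-hyperplane-smooth} for the smooth, strictly convex, $1$-unconditional body $B_q^{n+1}$, written as in the proof of Proposition~\ref{prop:Projection-ineq-hyperplane}. Write $v=\varepsilon u$ with $u\in\mathbb{S}^n\cap\R^{n+1}_+$. Split $S_{B_q^{n+1}}$ into its $2^{n+1}$ orthant pieces, reflected from the restriction $S_+$ to the positive orthant. For $B_q^{n+1}\cap\R^{n+1}_+$, the boundary part lying on the coordinate hyperplanes contributes $\sum_i u_i|P_{e_i^\perp}(B_q^{n+1}\cap\R^{n+1}_+)|=\|u\|_1|B_q^n|/2^n$. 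This also equals $\int\langle u,\xi\rangle\,dS_+(\xi)$, by the smooth analogue of \eqref{eq:Rogers-Shephard-ineq-p-zonotope-2}. Hence
\[
|P_{v^\perp}(B_q^{n+1}\cap\R^{n+1}_+)|=\tfrac12\Big(\int|\langle \varepsilon u,\xi\rangle|\,dS_+(\xi)+\int\langle u,\xi\rangle\,dS_+(\xi)\Big).
\]

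\emph{Upper bound and equality.} The inequality is Proposition~\ref{prop:Projection-ineq-hyperplane} with $K=B_q^{n+1}$. For the equality case I redo its proof with $S_+$ in place of the facet sum. This is via Lemma~\ref{lem:Weighted-Expectation} applied to $a(\xi)=(u_1\xi_1,\dots,u_{n+1}\xi_{n+1})$, or obtained by approximating with polytopes. The key point is that equality forces the pointwise equality $|\langle\varepsilon u,\xi\rangle|=\min_\tau|\langle\varepsilon u,\tau\xi\rangle|$ for $S_+$-a.e.\ $\xi$. Since $1<q<\infty$, the Gauss map of $B_q^{n+1}$ is a homeomorphism, and $S_+$ has positive density on the open positive orthant of the sphere. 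So the identity holds for a.e.\ $x\in\R^{n+1}_+$ by homogeneity, and Lemma~\ref{lemma:at_most_two_postive_components} then says that $u$ has at most two positive components.

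If there are exactly two, the proof of that lemma also gives $\varepsilon_i\ne\varepsilon_j$, i.e.\ $v$ has two nonzero components of opposite signs. Conversely, suppose $v=(\alpha,-\beta,0,\dots,0)$ with $\alpha,\beta\ge0$ (up to permutation). Then $v^\perp$ is spanned by $w_1=(\beta,\alpha,0,\dots)/\|(\alpha,\beta)\|_2$ and by $e_3,\dots,e_{n+1}$, all of which lie in positive cones over disjoint coordinate sets. So Lemma~\ref{lem:equ-projection} gives equality; the one-component case is the case $\beta=0$ (coordinate hyperplane).

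\emph{Lower bound.} Since $|\langle\varepsilon u,\xi\rangle|\le\langle u,\xi\rangle$ for $\xi\ge0$, the displayed formula gives
\[
|P_{v^\perp}(B_q^{n+1}\cap\R^{n+1}_+)|\le \int\langle u,\xi\rangle\,dS_+=\|v\|_1|B_q^n|/2^n,
\]
with equality when $\varepsilon$ has all signs equal. It therefore suffices to show $\Phi(v):=|P_{v^\perp}B_q^{n+1}|\ge c_{q,n+1}\|v\|_1|B_q^n|/2^n$. Now $\Phi$ is the support function of the projection body of $B_q^{n+1}$, so it is a $1$-homogeneous convex function; it is also invariant under coordinate sign changes and permutations. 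Hence on the simplex $\{v\ge0,\ \|v\|_1=1\}$, convexity together with permutation invariance shows that $\Phi$ is minimized at the barycenter $\theta/\sqrt{n+1}$. There the value is $\Phi(\theta)/\sqrt{n+1}$, which is exactly $c_{q,n+1}|B_q^n|/2^n$. Sharpness at $v=\theta$ follows because both steps are equalities: $\varepsilon\equiv 1$, and $v$ is the minimizer.

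The main obstacle is the equality analysis for the upper bound. One must pass from the polytope statement to the smooth body rigorously, extracting the a.e.\ pointwise minimization condition from equality of the integrals. This is where the full-support property of $S_+$ for $1<q<\infty$ is essential; I would obtain it by writing the integrand difference as a nonnegative continuous function integrated against $S_+$.
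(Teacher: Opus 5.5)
Your proposal is correct and follows the paper's proof essentially step by step. The shared steps are: the Cauchy formula split over orthants, Lemma~\ref{lem:Weighted-Expectation} followed by the minimization step for the upper bound, Lemma~\ref{lemma:at_most_two_postive_components} for the equality case, and for the lower bound the reduction to $\varepsilon\equiv 1$, the identity $|P_{u^\perp}(B_q^{n+1}\cap\mathbb{R}^{n+1}_+)|=\|u\|_1|B_q^n|/2^n$, and symmetrization over permutations. Your deviations are minor. You obtain convexity of $u\mapsto|P_{u^\perp}B_q^{n+1}|$ directly as the support function of the projection body instead of through the Barthe--Naor representation, which is slightly more elementary. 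You prove the converse equality direction via Lemma~\ref{lem:equ-projection} rather than by rechecking the inequality chain. You also explicitly justify the passage from $S_+$-a.e.\ to pointwise equality using full support of $S_+$, which the paper leaves implicit.
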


\begin{proof}
    The inequality follows by using the same approach as in Proposition~\ref{prop:Projection-ineq-hyperplane}.
    We represent an arbitrary vector of $\mathbb{S}^n$ by $\varepsilon u$, where $u\in \mathbb{S}^n\cap\mathbb{R}_+^{n+1}$ and $\varepsilon\in \{-1,1\}^{n+1}$. It follows from a classical formula, proved for example in \cite[Lemma 5.1.1]{S-93}, that 
    % The polytope $B_q^m\cap \mathbb{R}^{m}_+$ is generated by the unit facet normals $v_1,\dots,v_N$ and $-e_1,\dots,-e_{m} \in \R^m$. Since $\langle u,v_i \rangle \geq 0$ for any $i \in [N]$ and $\langle u,-e_i \rangle \leq 0$, we get
    \begin{equation}
    \label{eq:Rogers-Shephard-ineq-p-zonotope-lp}
        |P_{u^\perp} (B_q^{n+1} \cap \R^{n+1}_+) |_{n} 
        = \int_{\s^n\cap \mathbb{R}^{n+1}_+}  \langle u, \xi \rangle \; {d}S_{B_q^{n+1}} (\xi)
        = \sum_{i=1}^{n+1}u_i |P_{e_i^{\perp}} (B_q^{n+1}\cap \R^{n+1}_+)|_{n}.
    \end{equation}
    Using Cauchy formula \eqref{eq:Cauchy-polytope-projection-hyperplane-smooth}, and the fact that $S_{B_q^{n+1}}(\sigma \xi)=S_{B_q^{n+1}}(\xi)$ for any $\sigma\in\{-1,1\}^{n+1}$ and $\xi\in \mathbb{S}^n$, we compute
    % using \eqref{eq:Cauchy-polytope-projection-hyperplane}, \eqref{eq:Weighted-Expectation-up}, and that the vectors $\varepsilon \sigma$ range over $\{-1,1\}^{m}$ as $\sigma$ ranges over $\{-1,1\}^{m}$, we compute
    %Let $F_1,\ldots,F_N$ be the sets of the form $F \cap \R^{n+1}_+$ where $F$ is a facet of $K$ whose relative interiors intersect $\mathbb{R}^{n+1}_+$, and let $v_1,\ldots,v_N$ be the corresponding outer unit normal vectors of these facet of $K$.
    %Since $K$ is unconditional, $v_1, \dots, v_N \in \mathbb{R}_{+}^{n+1}$. Let $u \in \R^{n+1}_+$ and $\varepsilon \in \{-1,1\}^{n+1} $. Using \eqref{eq:Cauchy-polytope-projection-hyperplane} and \eqref{eq:Weighted-Expectation-up}, we obtain
    \begin{align}
            |P_{(\varepsilon u)^\perp} B_{q}^{n+1}|_{n} 
            &=
            \frac{1}{2} \sum_{\sigma \in \{-1,1\}^{n+1}}  \int_{\s^{n}\cap \R^{n+1}_+} | \langle \varepsilon u , \sigma \xi \rangle | \; {d} S_{B_q^{n+1}}(\sigma\xi)\\
            &= \frac{1}{2}   \int_{\s^{n}\cap \R^{n+1}_+} \sum_{\sigma \in \{-1,1\}^{n+1}} | \langle \varepsilon u , \sigma \xi \rangle | \; {d} S_{B_q^{n+1}}(\xi)
            % \frac{1}{2}\sum_{i=1}^{N}\left(|F_i|\sum_{\sigma\in \{-1,1\}^{n+1}}|\langle a_i,\varepsilon\sigma\rangle| \right)
            % = \frac{1}{2} \sum_{i=1}^N \left(|F_i| \sum_{\sigma}  | u \cdot \sigma v_i| \right) 
            \\
            &=2^{n}\int_{\s^{n}\cap \R^{n+1}_+} \mathbb{E}_{\sigma}| \langle \varepsilon u , \sigma \xi \rangle | \; {d} S_{B_q^{n+1}}(\xi).
    \end{align}
    Using Lemma~\ref{lem:Weighted-Expectation}, equality \eqref{eq:Rogers-Shephard-ineq-p-zonotope-lp} and Cauchy formula \eqref{eq:Cauchy-polytope-projection-hyperplane-smooth}, we have
    \begin{align}
            \qquad |P_{(\varepsilon u)^\perp} B_{q}^{n+1}|_{n} 
            &\leq
            2^{n-1}\int_{\s^{n}\cap \R^{n+1}_+}  \left( \langle u, \xi \rangle +\min_{\tau \in \{-1,1\}^{n+1}} | \langle \varepsilon u, \tau \xi \rangle | \right) \; {d} S_{B_q^{n+1}}(\xi) \label{eq:Rogers-Shephard-ineq-p-zonotope-3-smooth}
            \\
            &\leq 2^{n-1}\int_{\s^{n}\cap \R^{n+1}_+} \langle u, \xi \rangle +| \langle \varepsilon u,  \xi \rangle | \; {d} S_{B_q^{n+1}}(\xi)
            \label{eq:Rogers-Shephard-ineq-p-zonotope-4-smooth}
            \\
            &\overset{}{=}
            2^{n-1}\left(\int_{\s^{n}\cap \R^{n+1}_+}  | \langle \varepsilon u,  \xi \rangle |  \; {d} S_{B_q^{n+1}}(\xi) + \sum_{i=1}^{n+1} u_i |P_{e_i^{\perp}} (B_q^{n+1}\cap \R^{n+1}_+)|_{n} \right)
            \\
            &=
            2^{n} |P_{(\varepsilon u)^{\bot}} (B_q^{n+1}\cap \mathbb{R}^{n+1}_+)|_{n}.
    \end{align}
    This proves exactly the projection bound we need, and as a result, we have verified the required upper bound for zonotopes.
    
    To characterize the equality cases, suppose that \eqref{eq:hyperplane_projection_lq_ball} is equality. Then, \eqref{eq:Rogers-Shephard-ineq-p-zonotope-4-smooth} is equality, which implies that 
    \begin{equation*}
        | \langle \varepsilon u, \xi \rangle | = \min_{\tau \in \{-1,1\}^{n+1}} | \langle \varepsilon u, \tau \xi \rangle |
    \end{equation*}
    for almost every $\xi \in \s^n\cap\R^{n+1}_+$.
    % \begin{equation}
    %     \label{eq:Rogers-Shephard-ineq-p-zonotope-5-smooth}
    %     | \langle \varepsilon u, \xi \rangle | = \min_{\tau \in \{-1,1\}^{n+1}} | \langle \varepsilon u, \tau \xi \rangle |.
    % \end{equation}
    Since the function $|\langle \varepsilon u,\cdot\rangle|$ is 1-homogeneous, we have 
    \begin{equation}
        \label{eq:Rogers-Shephard-ineq-p-zonotope-6-smooth}
        | \langle \varepsilon u, x \rangle | = \min_{\tau \in \{-1,1\}^{n+1}} | \langle \varepsilon u, \tau x \rangle |
    \end{equation}
    for almost every $x \in \R^{n+1}_+$. This is exactly the situation described in Lemma~\ref{lemma:at_most_two_postive_components}, so we conclude that either $\varepsilon u$ has exactly one nonzero component, or there exist distinct indices $i$ and $j$ such that $u_i$ and $u_j$ are positive with $\varepsilon_i\neq \varepsilon_j$. The second case is equivalent to the condition that $\varepsilon u$ has exactly two nonzero components with different signs.

    For the other direction, suppose that either $\varepsilon u$ has exactly one nonzero component, or that there exist distinct indices $i$ and $j$ such that $u_i$ and $u_j$ are positive with $\varepsilon_i\neq \varepsilon_j$. Then, from Lemma~\ref{lem:Weighted-Expectation} we see that \eqref{eq:Rogers-Shephard-ineq-p-zonotope-3-smooth} is an equality. For each $\xi\in \mathbb{S}^n\cap \mathbb{R}^{n+1}_+$, we compute
    \begin{equation}
        |\langle \varepsilon u,\xi\rangle|
        =\min\{|\varepsilon_iu_i\xi_i-\varepsilon_ju_j\xi_j|, |\varepsilon_iu_i\xi_i+\varepsilon_ju_j\xi_j|\}
        =\min_{\tau\in \{-1,1\}^{n+1}}|\langle \varepsilon u,\tau \xi\rangle|,
    \end{equation}
    which shows that \eqref{eq:Rogers-Shephard-ineq-p-zonotope-4-smooth} is equality. Putting these observations together, we see that \eqref{eq:hyperplane_projection_lq_ball} is equality.
    
    %Now observe that if \(u\) has at most two nonzero coordinates, and the corresponding entries of \(\varepsilon\) have opposite signs, then for any \(i,j\) (not necessarily distinct) with \(u_i,u_j>0\), one has \(\varepsilon_i\neq \varepsilon_j\). In this case, a direct computation shows that equality holds in \eqref{eq:Rogers-Shephard-ineq-p-zonotope-3-smooth} and \eqref{eq:Rogers-Shephard-ineq-p-zonotope-4-smooth}. Now observe that if \(u\) has at most two nonzero coordinates, and the corresponding entries of \(\varepsilon\) have opposite signs, then for any \(i,j\) (not necessarily distinct) with \(u_i,u_j>0\), one has \(\varepsilon_i\neq \varepsilon_j\). In this case, a direct computation shows that equality holds in \eqref{eq:Rogers-Shephard-ineq-p-zonotope-3-smooth} and \eqref{eq:Rogers-Shephard-ineq-p-zonotope-4-smooth}.  
    Next, we prove the reverse inequality. We claim that the minimum
    \[
        \min_{\varepsilon, u} \frac{|P_{(\varepsilon u)^\perp} B_q^{n+1}|}{|P_{(\varepsilon u)^\perp} (B_q^{n+1}\cap \R^{n+1}_+)|}
    \]
    is attained at $ u = (\frac{1}{\sqrt{n+1}}, \ldots , \frac{1}{\sqrt{n+1}})$ and $ \varepsilon = (1,\ldots , 1)$.
    Using Cauchy formula \eqref{eq:Cauchy-polytope-projection-hyperplane-smooth}, we have
    \begin{align}
        |P_{(\varepsilon u)^{\bot}} (B_q^{n+1}\cap \mathbb{R}^{n+1}_+)|_{n}
        &=
        \frac{1}{2}
        \left(\int_{\s^{n}\cap \R^{n+1}_+}  | \langle \varepsilon u,  \xi \rangle |  {d} S_{B_q^{n+1}}(\xi) + \sum_{i=1}^{n+1} u_i |P_{e_i^{\perp}} (B_q^{n+1}\cap \R^{n+1}_+)|_{n} \right)
        \\
        &\leq
        \frac{1}{2}
        \left(\int_{\s^{n}\cap \R^{n+1}_+}  | \langle  u,  \xi \rangle |  \; {d} S_{B_q^{n+1}}(\xi) + \sum_{i=1}^{n+1} u_i |P_{e_i^{\perp}} (B_q^{n+1}\cap \R^{n+1}_+)|_{n} \right)
        \\
        &\leq 
        |P_{ u^{\bot}} (B_q^{n+1}\cap \mathbb{R}^{n+1}_+)|_{n}.
    \end{align}
    Thus, $ \varepsilon = (1,\dots ,1)$. Using \eqref{eq:Rogers-Shephard-ineq-p-zonotope-lp}, it is enough to prove that
    \[
        \min_{\varepsilon, u} \frac{|P_{(\varepsilon u)^\perp} B_q^{n+1}|}{|P_{(\varepsilon u)^\perp} (B_q^{n+1}\cap \R^{n+1}_+)|} = \min_{ u } \frac{2^{n}|P_{u^\perp} B_q^{n+1}|}{\|u\|_1 |B_q^n|}.
    \]
    Using the formula proved by Barthe and Naor \cite[Theorem 3]{BN-02}, we have
\begin{align}
&|P_{u^{\perp}}B_q^{n+1}|=\frac{|B_q^{n}|}{\mathbb{E}\left|X_1\right|} \mathbb{E}\left|\sum_{j=1}^{n+1} u_j X_j\right|,
\end{align}
where $X_1, \ldots, X_{n+1}$ are i.i.d.\ random variables with density $f_q(x)=\frac{q}{2(q-1) \Gamma(1 / q)}|x|^{\frac{2 -q}{q-1}} e^{-|x|^{\frac{q}{q-1}}}$. Then,
\begin{align}
    \frac{2^{n}|P_{u^\perp} B_q^{n+1}|}{\|u\|_1|B_q^n|} = \frac{2^{n}}{\mathbb{E}\left|X_1\right|} \mathbb{E}\left|\sum_{j=1}^{n+1} \frac{u_j}{\|u\|_1} X_j\right|.
    \label{eq:expectation_lowerbound}
\end{align}
Consider the function
$$
    F(a)= \mathbb{E}\left|\sum_{j=1}^{n+1} a_j X_j\right|.
$$ 
The function $F$ is convex and invariant under any permutation of its coordinates. Hence, by the same argument as in Lemma~\ref{lem:Weighted-Expectation-lo}, for any $a$ with $\|a\|_1 = 1$, we have
\[
    F\left(\frac{1}{{n+1}}, \ldots, \frac{1}{{n+1}}\right) \leq F(a).
\]
Therefore, \eqref{eq:expectation_lowerbound} is minimized when $u = (\frac{1}{\sqrt{n+1}},\ldots,\frac{1}{\sqrt{n+1}})$.
\end{proof}

\begin{remark}
    The preceding proof technique works for smooth unconditional convex bodies, and one can check the equality cases by using the same argument. More precisely,  \eqref{eq:Rogers-Shephard-ineq-p-zonotope-4-smooth} is equality if and only if $S_K (\{u \in \R^n_+ : \text{ there exist at least } 3 \text{ nonzero components of } u\})= 0$. 
\end{remark}

\begin{corollary}
    \label{cor:Rogers-Shephard-ineq-p-zonotope_hyperplane}
    For a fixed $1<p<\infty$, let $Z\subset \R^n$ be an asymmetric $L_p$-zonotope that is the $L_p$-sum of exactly $n+1$ segments of the form $[0,u_i]$ for $u_i\in \mathbb{R}^n$. Then,
    \begin{equation}
    \label{eq:Rogers-Shephard-ineq-p-zonotope_hyperplane}
        c_{q,n+1} |Z| \leq |Z \oplus_p -Z| \leq 2^n |Z|,
    \end{equation}
    where $c_{q,n+1}$ is defined according to \eqref{const:hyperplane_projection_lq_ball_low}.
    Equality holds on the right-side bound if and only if $Z$ is contained in a hyperplane or is the $L_p$-sum of $n$ segments with the origin as an endpoint. The left inequality is sharp.
\end{corollary}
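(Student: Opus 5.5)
The plan is to reduce the corollary to Proposition~\ref{prop:projection_hyperplane_lq_ball} with $m=n+1$, using the linear-image representation from the proof of Theorem~\ref{thm: equivalent-RS-Proj}.

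First, dispose of the degenerate case. If $Z$ is contained in a hyperplane, then $|Z|=|Z\oplus_p-Z|=0$. Both inequalities then hold trivially, and the right one is an equality.

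Now assume $Z=\bigoplus_{i\in[n+1]}^{(p)}[0,u_i]$ is full-dimensional. Let $U:\R^{n+1}\to\R^n$ be the linear map with $Ue_i=u_i$; it has rank $n$.

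By Lemma~\ref{lem:positive_orthant_support_function}, $Z=U(B_q^{n+1}\cap\R^{n+1}_+)$ and $Z\oplus_p-Z=U(B_q^{n+1})$. The singular value decomposition argument in the proof of Theorem~\ref{thm: equivalent-RS-Proj} gives an injective linear $T:\R^n\to\R^{n+1}$ with $TU=P_E$, where $E=(\ker U)^\perp$ is a hyperplane $v^\perp$ of $\R^{n+1}$. Equations \eqref{eq:volume_lq_ball_projection} and \eqref{eq:volume_orthant_lq_ball_projection} (equivalently Corollary~\ref{Cor:equality-zp-lq-vol}) then give, with the same constant $C=\sqrt{\det(T^*T)}>0$,
\begin{equation*}
|P_{v^\perp}B_q^{n+1}|_n=C|Z\oplus_p-Z|,\qquad |P_{v^\perp}(B_q^{n+1}\cap\R^{n+1}_+)|_n=C|Z|.
\end{equation*}
Dividing \eqref{eq:hyperplane_projection_lq_ball} by $C$ yields \eqref{eq:Rogers-Shephard-ineq-p-zonotope_hyperplane}.

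For the left inequality, sharpness follows by taking $Z=U(B_q^{n+1}\cap\R^{n+1}_+)$ for any rank-$n$ map $U$ with $\ker U=\R\theta$, for instance $U=P_{\theta^\perp}$ identified with a map to $\R^n$. Proposition~\ref{prop:projection_hyperplane_lq_ball} says equality holds at $v=\theta$.

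For the equality case on the right, sufficiency follows from Lemma~\ref{lem:equ-lp-ball}. For necessity, equality on the right forces equality in \eqref{eq:hyperplane_projection_lq_ball}. By Proposition~\ref{prop:projection_hyperplane_lq_ball}, $v$ then has either exactly one nonzero coordinate, or exactly two nonzero coordinates of opposite signs.

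This is the step needing care: translating the condition on $v$ back to the generators of $Z$. Since $v$ spans $\ker U$, we have $\sum_i v_iu_i=0$.

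In the first case, $v=\pm e_k$, so $u_k=0$ and $Z$ is the $L_p$-sum of the $n$ remaining segments, which span $\R^n$.

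In the second case, $v_iu_i+v_ju_j=0$ with $v_iv_j<0$. This gives $u_i=\lambda u_j$ for some $\lambda>0$, so the segments $[0,u_i]$ and $[0,u_j]$ are positively parallel. Their $L_p$-sum is $[0,(1+\lambda^p)^{1/p}u_j]$, so $Z$ is again an $L_p$-sum of $n$ segments with the origin as an endpoint.

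Alternatively, necessity can be obtained from Theorem~\ref{thm: equivalent-RS-Proj}'s argument combined with Lemma~\ref{lem:sets-projection}. The direct computation above seems simpler.
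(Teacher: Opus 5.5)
Your proposal is correct and follows the paper's route. The paper obtains the corollary by specializing the linear-image correspondence from the proof of Theorem~\ref{thm: equivalent-RS-Proj} (Corollary~\ref{Cor:equality-zp-lq-vol}) to $m=n+1$, where $E=(\ker U)^\perp$ is a hyperplane, and then invoking Proposition~\ref{prop:projection_hyperplane_lq_ball}. For the equality case, you read the condition on $v$ off directly from $\sum_i v_iu_i=0$ rather than translating it into the partition condition and going through Lemmas~\ref{lem:uniqueness-lq}--\ref{lem:sets-projection}; this is a valid and somewhat more transparent shortcut.
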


\subsection{Projections onto lines} \label{sec:proj-line}

\begin{lemma}\label{lemma:supremum_over_K}
    Let $K\subset\mathbb{R}^m$ be an unconditional convex body. Then, for any $v\in \mathbb{S}^{m-1}$ we have
    \begin{align*}
        h_{K\cap \mathbb{R}^m_+}(v)=\sup_{k\in K}\sum_{v_i>0}k_iv_i\qquad \textup{and}\qquad h_{K\cap\mathbb{R}^m_+}(-v)=\sup_{k\in K}\sum_{v_i<0}k_iv_i,
    \end{align*}
    where we use the convention that a sum taken over the empty set is equal to $0$.
\end{lemma}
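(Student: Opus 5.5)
The plan is to prove the first identity and obtain the second from it by replacing $v$ with $-v$. For $x\in\mathbb{R}^m$ we write $x^{(v)}$ for the vector whose $i$th coordinate is $x_i$ if $v_i>0$ and $0$ otherwise. Since $K$ is unconditional and convex, it contains the coordinate projection $x^{(v)}$ of each of its points: $x^{(v)}$ is the average of $x$ and the point obtained by flipping the signs of the coordinates with $v_i\le 0$.

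For the inequality $h_{K\cap\mathbb{R}^m_+}(v)\le \sup_{k\in K}\sum_{v_i>0}k_iv_i$, take $y\in K\cap\mathbb{R}^m_+$. Then $y_iv_i\le 0$ whenever $v_i\le 0$, so
\[
\langle y,v\rangle\le \sum_{v_i>0}y_iv_i,
\]
and $y\in K$. Taking the supremum over $y$ gives this direction.

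For the reverse inequality, take $k\in K$ and let $|k|$ denote the vector with coordinates $|k_i|$. By unconditionality, $|k|\in K\cap\mathbb{R}^m_+$, and hence $z:=|k|^{(v)}\in K\cap\mathbb{R}^m_+$ by the remark above, using that $K\cap\mathbb{R}^m_+$ is convex and stable under the relevant sign flips. We then compute
\[
\langle z,v\rangle=\sum_{v_i>0}|k_i|v_i\ge\sum_{v_i>0}k_iv_i .
\]
Taking the supremum over $k\in K$ gives $h_{K\cap\mathbb{R}^m_+}(v)\ge \sup_{k\in K}\sum_{v_i>0}k_iv_i$.

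Applying the first identity to $-v$ gives
\[
h_{K\cap\mathbb{R}^m_+}(-v)=\sup_{k\in K}\sum_{v_i<0}k_i(-v_i).
\]
Substituting $k\mapsto -k$, which preserves $K$ by symmetry, this supremum equals $\sup_{k\in K}\sum_{v_i<0}k_iv_i$. When no index satisfies the condition, both sides are $0$ because $0\in K\cap\mathbb{R}^m_+$, consistent with the empty-sum convention.

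The main point to check carefully is that the coordinate projection $x^{(v)}$ stays in $K$. This is the step that uses both unconditionality and convexity; everything else is direct.
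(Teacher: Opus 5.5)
Your proof is correct and follows essentially the same route as the paper. Both directions rest on dropping the nonpositive terms, on $|k|\in K$, and on the fact that zeroing the coordinates with $v_i\le 0$ keeps a point of $K$ inside $K$; you justify that last fact explicitly via the averaging argument, where the paper only says ``we lose no generality.'' The paper calls the second identity ``almost identical'' and omits it, whereas you get it directly by $v\mapsto -v$ and $k\mapsto -k$.

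One small wording fix: it is $K$, not $K\cap\mathbb{R}^m_+$, that is stable under sign flips. The membership $|k|^{(v)}\in K$ follows from your remark applied to $|k|\in K$, and $|k|^{(v)}\in\mathbb{R}^m_+$ is immediate.
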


\begin{proof}
    We begin with the observation that
    \begin{equation}\label{eq:absolute_value_representation}
        K\cap \mathbb{R}^m_+=\{(|k_1|,\dots,|k_m|):(k_1,\dots,k_m)\in K\}.
    \end{equation}
    To see this, we take an element $(k_1,\dots,k_m)\in K$. By the unconditionality of $K$, we have $(|k_1|,\dots,|k_m|)\in K\cap \mathbb{R}^m_+$, proving that the right side is contained in the left side. On the other hand, if $(k_1,\dots,k_m)\in K\cap \mathbb{R}^m_+$, then $(k_1,\dots,k_m)=(|k_1|,\dots,|k_m|)$, proving that the left side is contained in the right side, which establishes \eqref{eq:absolute_value_representation}.

    We will also observe that
    \begin{equation}\label{eq:sum_over_positive_terms}
        \sup_{k\in K\cap \mathbb{R}^m_+}\sum_{v_i>0}k_iv_i=\sup_{k\in K\cap \mathbb{R}^m_+}\sum_{i=1}^mk_iv_i.
    \end{equation}
    To verify this, choose $k^*\in K\cap\mathbb{R}^m_+$ so that
    \begin{equation*}
        \sum_{v_i>0}k_i^*v_i=\sup_{k\in K\cap \mathbb{R}^m_+}\sum_{v_i>0}k_iv_i.
    \end{equation*}
    Because we take the above sums over the indices $i$ for which $v_i>0$, we lose no generality in assuming that $k_i^*=0$ whenever $v_i\leq 0$. It follows that
    \begin{equation*}
        \sum_{i=1}^mk_i^*v_i=\sum_{v_i>0}k_i^*=\sup_{k\in K\cap \mathbb{R}^m_+}\sum_{v_i>0}k_iv_i,
    \end{equation*}
    from which we conclude
    \begin{equation*}
        \sup_{k\in K\cap \mathbb{R}^m_+}\sum_{i=1}^mk_iv_i\geq \sup_{k\in K\cap \mathbb{R}^m_+}\sum_{v_i>0}k_iv_i.
    \end{equation*}
    In the other direction, we have
    \begin{equation*}
        \sup_{k\in K\cap \mathbb{R}^m_+}\sum_{i=1}^mk_iv_i=\sup_{k\in K\cap \mathbb{R}^m_+}\left(\sum_{v_i>0}k_iv_i+\sum_{v_i<0}k_iv_i\right)\leq \sup_{k\in K\cap \mathbb{R}^m_+}\sum_{v_i>0}k_iv_i,
    \end{equation*}
    which completes the verification of \eqref{eq:sum_over_positive_terms}.

    To prove the identity on the left, we use \eqref{eq:absolute_value_representation} and \eqref{eq:sum_over_positive_terms} to compute
    \begin{equation*}
        \begin{split}
            \sup_{k\in K}\sum_{v_i>0}k_iv_i\leq \sup_{k\in K}\sum_{v_i>0}|k_i|v_i=\sup_{k\in K\cap \mathbb{R}^m_+}\sum_{v_i>0}k_iv_i=\sup_{k\in K\cap \mathbb{R}^m_+}\sum_{i=1}^mk_iv_i=h_{K\cap \mathbb{R}^m_+}(v).
        \end{split}
    \end{equation*}
    Next, we use \eqref{eq:sum_over_positive_terms} to get
    \begin{equation*}
        \begin{split}
            h_{K\cap \mathbb{R}^m_+}(v)=\sup_{k\in K\cap \mathbb{R}^m_+}\sum_{i=1}^mk_iv_i  =\sup_{k\in K\cap \mathbb{R}^m_+}\sum_{v_i>0}k_iv_i\leq \sup_{k\in K}\sum_{v_i>0}k_iv_i.
        \end{split}
    \end{equation*}
    From the above two computations, we conclude that
    \begin{equation*}
        h_{K\cap\mathbb{R}^m_+}(v)=\sup_{k\in K}\sum_{v_i>0}k_iv_i,
    \end{equation*}
    as we claimed. We omit the proof of the second identity because it is almost identical to the proof of the first identity.
\end{proof}

To state the equality conditions in the next result, we define the face of $K$ in direction $u\in \mathbb{S}^m$ by
\begin{align*}
    F(K;u):=\{k\in K:\langle k,u \rangle=h_K(u)\}.
\end{align*}
\begingroup
\renewcommand{\MainTheoremExtraThree}{%
Moreover, if we define 
        \begin{align*}
            v_+:=\sum_{v_i>0}v_ie_i\qquad \textup{and}\qquad v_-:=\sum_{v_i<0}v_ie_i,
        \end{align*}
        then equality holds on the right if and only if either $v\in \mathbb{R}^m_+\cup (-\mathbb{R}^m_+)$, or $v\notin \mathbb{R}^m_+\cup (-\mathbb{R}^m_+)$ and the faces $F(K;v_+)$ and $F(K;v_-)$ intersect. In the case where $K$ is the ball $B_q^m$ with $1\leq q<\infty$, equality holds if and only if $v\in \mathbb{R}^m_+\cup (-\mathbb{R}^m_+)$.  The left inequality is sharp.
}
\ProjectionOne*
\endgroup
\begin{proof}
    Because the subspace $E$ has dimension $1$, there exists a vector $v\in \mathbb{S}^{m-1}$ such that $E=\textup{span}(v)$. Then, the projection of an arbitrary vector $x\in \mathbb{R}^m$ onto the subspace $E$ is exactly $P_E(x)=v\langle x,v\rangle$, and as a result we write
    \begin{equation*}
        P_EK=\{v\langle k,v\rangle:k\in K\}=v[-h_K(-v),h_K(v)].
    \end{equation*}
    From the unconditionality of $K$, we find that
    \begin{equation*}
        |P_EK|_1=h_K(v)+h_K(-v)=2h_K(v).
    \end{equation*}
    Using the same idea as above, we compute
    \begin{equation*}
        |P_E(K\cap \mathbb{R}^m_+)|_1=h_{K\cap \mathbb{R}^m_+}(v)+h_{K\cap\mathbb{R}^m_+}(-v)=h_{K\cap \mathbb{R}^m_+-K\cap \mathbb{R}^m_+}(v).
    \end{equation*}
    Consequently, the bound we want to prove is equivalent to the bound
    \begin{equation*}
        h_K(v)\leq h_{K\cap \mathbb{R}^m_+-K\cap \mathbb{R}^m_+}(v),
    \end{equation*}
    which holds if the containment $K\subset K\cap \mathbb{R}^m_+-K\cap \mathbb{R}^m_+$ is true. 
    
    To verify this containment, let $k\in K$ and define $I:=\{i:k_i\geq 0\}$. Then, $k$ has the orthogonal decomposition $k=k_I+k_{[m]\backslash I}$, where we define
    \begin{align*}
        k_I:=\sum_{i\in I}k_ie_i\qquad\textup{and}\qquad k_{[m]\backslash I}:=\sum_{i\in [m]\backslash I}k_ie_i.
    \end{align*}
    By the unconditionality of $K$, we have that $k_I-k_{[m]\backslash I}\in K$. Then, by the convexity of $K$, we get 
    \begin{equation*}
        k_I\in [k_I-k_{[m]\backslash I},k_I+k_{[m]\backslash I}]\subset K,
    \end{equation*}
    so that $k_I\in K\cap \mathbb{R}^m_+$. Similar reasoning shows that $k_{[m]\backslash I}\in -(K\cap \mathbb{R}^m_+)$. Putting everything together, we find that $k\in K\cap \mathbb{R}^m_+-K\cap \mathbb{R}^m_+$.
    
    Suppose that equality holds on the right and that $v\notin \mathbb{R}^m_+\cup (-\mathbb{R}^m_+)$. Then, from the above computations, we see that equality holds if and only if
    \begin{equation*}
        h_K(v)=h_{K\cap \mathbb{R}^m_+}(v)+h_{K\cap \mathbb{R}^m_+}(-v).
    \end{equation*}
    From Lemma~\ref{lemma:supremum_over_K} and the definition of $v_+$ and $v_-$, this is equivalent to
    \begin{equation}\label{eq:equivelent_condition_dim_one_equality}
       \sup_{k\in K}(\langle k,v_+\rangle+\langle k,v_-\rangle)= \sup_{k\in K}\langle k,v_+\rangle+\sup_{k\in K}\langle k,v_-\rangle,
    \end{equation}
    which holds if and only if there exists $k_0\in K$ such that 
    \begin{align*}
        \langle k_0,v_+\rangle=h_K(v_+)\qquad \textup{and}\qquad \langle k_0,v_-\rangle=h_K(v_-).
    \end{align*}
    That is, $k_0\in F(K,v_+)\cap F(K,v_-)$, verifying the equality conditions in the case $v\notin \mathbb{R}^m_+\cup (-\mathbb{R}^m_+)$.

    If $v\in \mathbb{R}^m_+\cup (-\mathbb{R}^m_+)$, then either $v_+=0$, or $v_-=0$. As a result, either $\langle k,v_+\rangle=0$ for all $k\in K$, or $\langle k,v_-\rangle=0$ for all $k\in K$. Under this condition, \eqref{eq:equivelent_condition_dim_one_equality} is immediate, which is equality on the right side. If $F(K,v_+)\cap F(K,v_-)$ is non-empty, then we follow the previous step, and it is clear that equality holds.

    Now, we show that the left inequality is sharp when $K = B_1^m$ and $v=\frac{1}{\sqrt{2}}(e_i-e_j)$. Thus,
    \[
        |P_E B_1^m| = 2h_{B_1^m}(v) = 2\| v \|_\infty = \sqrt{2}.
    \]
    Using the fact that $
h_{\operatorname{conv}\left(A_1, \ldots, A_m\right)}(u)=\max _{1 \leq j \leq m} h_{A_j}(u)
$ for any compact convex sets $A_i$, we have
    \begin{align}
        |P_E (B_1^m \cap \R^m_+)| &= h_{B_1^m\cap \mathbb{R}^m_+}(v)+h_{B_1^m\cap\mathbb{R}^m_+}(-v) 
        = h_{\conv \{ 0,e_1,\ldots,e_m\}}(v)+h_{\conv \{ 0,e_1,\ldots,e_m\}}(-v) 
        \\
        &= \max_{1\leq j \leq m} \{ 0, \langle v,e_j \rangle : 1\leq j \leq m \} + \max_{1\leq j \leq m} \{ 0, \langle -v,e_j \rangle : 1\leq j \leq m \} 
        =\sqrt{2}. 
    \end{align}
    Thus, $|P_E (B_1^m \cap \R^m_+)| = |P_E B_1^m |$.
\end{proof}

\subsection{Local Maximizer} \label{sec: local maximum}

We recall a formula due to Lutwak \cite{L-93, L-96}. For any convex bodies $A,B$ containing the origin with $0 \in \operatorname{int} A$, we have
\begin{equation}
    \label{eq:lp-mixed volume}
    \lim _{\varepsilon \rightarrow 0} \frac{\left|A \oplus_p\left(\varepsilon^{\frac{1}{p}} B\right)\right|-|A|}{\varepsilon}=\frac{1}{p} \int_{\s^{n-1}} h_B^p(u) h_A^{1-p} (u)\, d S_A(u).
\end{equation}
We present below a lemma to handle \eqref{eq:lp-mixed volume} in the case where the origin lies on the boundary of the set. Our argument follows the approach from  Gardner, Hug, and Weil \cite[Lemma 8.3]{GHW-14}.

Recall from \cite[Theorem 5.1.7]{S-93} that for any compact convex sets $K_1,\ldots,K_m \subset \R^n$ and non-negative numbers $t_1,\ldots,t_m$, one has
\begin{align} \label{eq:mixed-volume}
    |t_1K_1+\ldots + t_m K_m| =\sum_{i_1,\ldots, i_m=1}^m t_{i_1} \cdots t_{i_m} V(K_{i_1}, \ldots, K_{i_n}).
\end{align}
for some non-negative numbers $V(K_{i_1}, \ldots, K_{i_n})$, which are called the mixed volumes of $K_{i_1}, \ldots, K_{i_n}$. Similarly, the mixed area measures $S_{K_1, \ldots,K_{n-1}}$ are defined to be the coefficients in the following expansion
\[
    S_{t_1K_1+\cdots+t_mK_m} (\cdot) =\sum_{i_1,\ldots,i_{n-1}=1}^{m} t_{i_1}\cdots t_{i_{n-1}} S_{K_{i_1},\ldots,K_{i_{n-1}}} (\cdot),
\]
Moreover, one has the integral representation of mixed volume via the mixed area measure
\begin{equation} \label{eq:int-representation-mixed-vol}
    V(K_1,\ldots,K_n) = \frac{1}{n} \int_{\s^{n-1}} h_{K_1}(u) \,d S_{K_2,\ldots,K_n} (u).
\end{equation}
\begin{lemma} \label{lem:LYZ-0inboundary}
    Let $L$ be a compact convex set containing the origin. If $L \subset \R^n_+$, we have
    \[
        \lim_{\varepsilon\rightarrow 0^+} \frac{ | (B_q^n \cap \R^n_+) \oplus_p \varepsilon^{\frac1p} L| - |B_q^n \cap \R^n_+|}{\varepsilon} =\frac{1}{p} \int_{\s^{n-1}\cap\R^n_+} h_{L}^p (u) h_{B_q^n \cap \R^n_+}^{1-p} (u) \, dS_{B_q^n \cap \R^n_+} (u).
    \]
    If $L\not\subset\mathbb{R}^n_+$, the left-hand side diverges to $+\infty$.
\end{lemma}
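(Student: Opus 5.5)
Write $A:=B_q^n\cap\R^n_+$ and $A_\varepsilon := A\oplus_p \varepsilon^{1/p}L$. The plan follows Gardner--Hug--Weil \cite[Lemma 8.3]{GHW-14}. We reduce the derivative of $|A_\varepsilon|$ to a first-variation (mixed volume) computation. The only point needing care is that $h_A$ vanishes on the part of $\s^{n-1}$ outside the open positive orthant.

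\emph{Step 1: support function.} We have $h_{A_\varepsilon}=(h_A^p+\varepsilon h_L^p)^{1/p}$. By Lemma~\ref{lem:positive_orthant_support_function}, $h_A(u)=\|u_+\|_p$. This vanishes exactly on $-\R^n_+$ and is positive elsewhere.

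\emph{Step 2: the case $L\subset\R^n_+$.} Here $h_L(u)=\sup_{y\in L}\langle y,u\rangle\le \sup_{y\in L}\langle y,u_+\rangle$. Since $h_L$ is sublinear and vanishes where $u\in-\R^n_+$, one gets $h_L\le C h_A$ with $C$ depending only on the radius of $L$. Indeed, $h_L(u)\le h_L(u_+)\le R\|u_+\|_2\le R' \|u_+\|_p$.

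Consequently $A\subset A_\varepsilon\subset (1+C^p\varepsilon)^{1/p}A$. In particular $h_{A_\varepsilon}-h_A = \frac{\varepsilon}{p}h_L^p h_A^{1-p}+o(\varepsilon)$ uniformly on the sphere. Both quantities vanish on $-\R^n_+$, and off that set the expansion is bounded by $C^p h_A$.

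Now use the standard mixed-volume sandwich $nV(A_\varepsilon,A[n-1])\ge\ldots$. More precisely, we write
\[|A_\varepsilon|-|A| = \sum_{j=0}^{n-1} V(A_\varepsilon[j+1],A[n-1-j]) - V(A_\varepsilon[j],A[n-j]).\]
By \eqref{eq:int-representation-mixed-vol}, each term equals $\frac1n\int (h_{A_\varepsilon}-h_A)\,dS_{A_\varepsilon[j],A[n-1-j]}$. Dividing by $\varepsilon$, the integrands converge uniformly to $\frac1p h_L^ph_A^{1-p}$, which is a bounded function. The measures converge weakly to $S_A$ because $A_\varepsilon\to A$ in the Hausdorff metric.

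\emph{Step 3: the main obstacle.} The limit function $g:=\frac1p h_L^ph_A^{1-p}$ may be discontinuous where $h_A\to0$, so weak convergence alone is not enough. To handle this, I would note that $S_A$ is supported on $(\s^{n-1}\cap\R^n_+)\cup\{-e_1,\dots,-e_n\}$, and that $h_A(-e_i)=0$, $h_L(-e_i)=0$.

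On a neighbourhood of $-\R^n_+$ the integrands are at most $C^p h_A$, which is continuous and vanishes on $-\R^n_+$. So that part contributes $o(1)$ uniformly in $\varepsilon$. On the complement, $g$ is continuous, and weak convergence applies. Summing the $n$ terms gives $n\cdot\frac1n\int g\,dS_A$. The integral over the points $-e_i$ vanishes, which yields the stated formula over $\s^{n-1}\cap\R^n_+$.

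\emph{Step 4: the case $L\not\subset\R^n_+$.} Pick $y\in L$ with some $y_i<0$. The key observation is that $h_A(-e_i)=0$ while $h_{A_\varepsilon}(-e_i)\ge\varepsilon^{1/p}(-y_i)>0$. Thus $A_\varepsilon$ contains $\conv(A, \varepsilon^{1/p}y')$ for a point $y'$ at distance $\sim\varepsilon^{1/p}$ beyond the facet $A\cap e_i^\perp$, which has positive $(n-1)$-volume. Concretely, $A_\varepsilon\supset A\oplus_p[0,\varepsilon^{1/p}y]$, and this set contains the pyramid over that facet with height $c\,\varepsilon^{1/p}$.

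Hence $|A_\varepsilon|-|A|\ge c'\varepsilon^{1/p}$. Since $p>1$, dividing by $\varepsilon$ gives $c'\varepsilon^{1/p-1}\to+\infty$.
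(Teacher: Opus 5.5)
Your proposal is correct and follows the paper's proof essentially step by step. The common ingredients are the telescoping mixed-volume decomposition, the bound $h_L\le C h_A$ giving uniform convergence of the difference quotients, weak convergence of the mixed area measures, and the fact that $S_A$ is supported on $(\s^{n-1}\cap\R^n_+)\cup\{-e_1,\dots,-e_n\}$ with $h_L(-e_i)=0$.

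Two minor remarks. First, your Step 3 worry is unnecessary: the bound $0\le g\le \frac{C^p}{p}h_A$ already shows that $g$, extended by $0$ on $-\R^n_+$, is continuous on the whole sphere, so weak convergence applies directly, as in the paper. Second, in Step 4 your pyramid over the facet $A\cap e_i^\perp$ gives the paper's bound $\frac{\varepsilon^{1/p}}{n}\frac{|B_q^{n-1}|}{2^{n-1}}h_L(-e_i)$, with $-y_i$ in place of $h_L(-e_i)$. The paper obtains it instead from $|K_\varepsilon|-|K_0|\ge V(K_\varepsilon,K_0[n-1])-|K_0|$ together with the atom of $S_{K_0}$ at $-e_i$.
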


\begin{proof}
    Let $K_\varepsilon = (B_q^n \cap \R^n_+) \oplus_p \varepsilon^{\frac{1}{p}} L$. We write the telescoping sum
    \begin{equation}
        |K_\varepsilon |- |K_0| = V( K_\varepsilon [n]) - V(K_\varepsilon [n-1], K_0) + \dots + V( K_\varepsilon ,K_0[n-1]) - V(K_0[n]),
    \end{equation}
    where we use the notation $A[m]$ for a convex set $A$ repeated $m$ times.
    Using \eqref{eq:int-representation-mixed-vol}, each pair $V( K_\varepsilon [i], K_0 [n-i])- V(K_\varepsilon [i-1], K_0[n-i+1]) $ in the telescoping sum can be written as
    \begin{align}
        % V&( K_\varepsilon [i], K_0 [n-i])- V(K_\varepsilon [i-1], K_0[n-i+1]) 
        % \\
        % &=
        \frac{1}{n} \int_{\s^{n-1}} h_{K_\varepsilon} (u)- h_{K_0} (u)\,dS_{K_\varepsilon [i-1],K_0[n-i]} (u).
        % \\
        % &=\frac{1}{n} \int_{\s^{n-1}} (h_{K_0}^p + \varepsilon h_L^p)^{\frac{1}{p}} - h_{K_0} \,dS_{K_\varepsilon [i-1],K_0[n-i]}.
        % \\
        % &=\frac{1}{n} \left(\int_{\s^{n-1} \setminus -\R^n_+} (h_{K_0}^p + \varepsilon h_L^p)^{\frac{1}{p}} - h_{K_0} \,dS_{K_\varepsilon [i-1],K_0[n-i]} +\int_{\s^{n-1} \cap -\R^n_+} \varepsilon^{\frac{1}{p}} h_L \,dS_{K_\varepsilon [i-1],K_0[n-i]} \right)
    \end{align}
    Therefore,
    \begin{equation} \label{eq:tele-mixed}
       \frac{\left|K_{\varepsilon}\right|-\left|K_0\right|}{\varepsilon}=\frac{1}{n} \sum_{i=1}^n \int_{\s^{n-1}} \frac{h_{K_{\varepsilon}}(u)-h_{K_0}(u)}{\varepsilon} d S_{K_{\varepsilon}[i-1], K_0[n-i]}(u).
    \end{equation}
    % Note that $h_{K_\varepsilon} - h_K \geq 0$ since $K_0 \subset K_\varepsilon$.  
    Now, we will divide the proof into two cases, depending on whether $L \subset \R^n_+$, or $L \not \subset \R^n_+$.
    \begin{itemize}[wide =0pt, labelwidth= 1.1cm]
        \item[Case 1:] 
        % If $L \subset \R^n_+$, then $K_\varepsilon \subset \R^n_+$ and so $(h_{K_\varepsilon} - h_{K_0}) (u) = 0$ for any $ u =-e_i$ for any $i$. Thus
        If $L \subset \R^n_+$, then there exists a constant $M > 0$  such that $h_L \leq M h_{K_0}$ for all $ u \in \s^{n-1}$. Thus,
        \[
            \frac{h_{K_{\varepsilon}}(u)-h_{K_0}(u)}{\varepsilon} \rightarrow f \quad  \text{uniformly on } \s^{n-1},
        \]
        where
        \[
            f(u)= \begin{cases}\frac{1}{p} h_L^p(u) h_{K_0}^{1-p}(u), & h_{K_0}(u)>0, \\ 0, & h_{K_0}(u)=0 .\end{cases}
        \]
        % $K_\varepsilon \subset \R^n_+$ and so $(h_{K_\varepsilon} - h_{K_0}) (u) = 0$ for any $ u =-e_i$ for any $i$. Thus
        % \begin{align}
        %     % V( K_\varepsilon [i], K_0 [n-i]) - V(K_\varepsilon [i-1], K_0[n-i+1]) 
        % % &=\frac{1}{n} \int_{\s^{n-1}} h_{K_\varepsilon} - h_{K_0} \,dS_{K_\varepsilon [i-1],K_0[n-i]}
        % % % \\
        % 0 &\leq \frac{1}{n} \int_{\s^{n-1}\setminus\R^n_+} h_{K_\varepsilon} (u) - h_{K_0} (u) \,dS_{K_\varepsilon [i-1],K_0[n-i]} (u)
        % \\
        % &\leq \frac{1}{n} \int_{\s^{n-1}\setminus\R^n_+} h_{K_\varepsilon} (u) - h_{K_0} (u) \,dS_{K_\varepsilon [n-i]}(u)
        % % \\&
        % = \sum_{i=1}^n | P_{e_i^\perp} K_\varepsilon| (h_{K_\varepsilon}  - h_{K_0} )(-e_i) = 0.
        % \end{align}
        Since 
        % $\lim_{\varepsilon \to 0^+} \frac{h_{K_\varepsilon}(u) -h_{K_0} (u)}{\varepsilon} = h_{L}^p (u) h_{B^n_q \cap \R^n_+}^{1-p}(u) $ uniformly for $u\in \s^{n-1}\cap \R^n_+$ and since 
        $S_{K_{\varepsilon}[i], K[n-1-i]} $ converges $ S_K$ weakly as $\varepsilon \rightarrow 0^+$, we have that 
        \begin{align}
            &\lim_{\varepsilon \to 0^+} \frac{V( K_\varepsilon [i], K_0 [n-i]) - V(K_\varepsilon [i-1], K_0[n-i+1])}{\varepsilon} 
            = \frac{1}{n} \int_{\s^{n-1}\cap \R^n_+} f(u) \, dS_{K_0} (u)
            \\
            &= \frac{1}{np} \int_{\s^{n-1}\cap \R^n_+} h_{L}^p (u) h_{B_q^n \cap \R^n_+}^{1-p} (u) \, dS_{K_0} (u),
        \end{align}
        where in the last equality, we use the fact that $S_{K_0}$ is supported on $ \left(\mathbb{S}^{n-1} \cap \mathbb{R}_{+}^n\right) \cup\left\{-e_1, \ldots,-e_n\right\} $ and that $ h_L\left(-e_i\right)=0. $
        Using \eqref{eq:tele-mixed}, we have
        \[
            \lim_{\varepsilon \to 0^+} \frac{ |K_\varepsilon | - |K_0|}{\varepsilon} = \frac{1}{p} \int_{\s^{n-1}\cap \R^n_+} h_{L}^p (u)h_{B_q^n \cap \R^n_+}^{1-p} (u)\, dS_{K_0} (u).
        \]
        \item[Case 2:] There exists $i$ such that $h_{L}(-e_i) >0$. Note that $ h_{K_0} (-e_i) = 0$.
        Since $K_0 \subset K_\varepsilon$, we have that $h_{K_0} \leq h_{K_\varepsilon}$. Using \eqref{eq:tele-mixed}, we have
        % \begin{align}
        %     % \frac{V( K_\varepsilon [i], K_0 [n-i]) - V(K_\varepsilon [i-1], K_0[n-i+1])}{\varepsilon} 
        %     &\frac{1}{n} \int_{\s^{n-1}} h_{K_\varepsilon} (u) - h_{K_0} (u)\,dS_{K_\varepsilon [i-1],K_0[n-i]} (u)
        %     \\
        %     &
        %     \geq \frac{1}{n} \int_{\s^{n-1}\cap \R^n_-} h_{K_\varepsilon} (u)- h_{K_0} (u)\,dS_{K_\varepsilon [i-1],K_0[n-i]} (u)
        %     \\
        %     &=
        %     \frac{1}{n} \int_{\s^{n-1}} \varepsilon^{\frac{1}{p}} h_L (u) \,dS_{K_\varepsilon [i-1],K_0[n-i]} (u).
        % \end{align}
        \begin{align}
            |K_\varepsilon| - |K_0| 
            \geq 
            % V\left(K_{\varepsilon}, K_0[n-1]\right)-V\left(K_0[n]\right)
            % =
            \frac{1}{n} \int_{\mathbb{S}^{n-1}}\left(h_{K_{\varepsilon}}(u)-h_{K_0}(u)\right) d S_{K_0}(u) \geq \frac{\varepsilon^{1/p}}{n}\frac{|B_q^{n-1}|}{2^{n-1}} h_L(-e_i).
        \end{align}
        % Since $S_{K_{\varepsilon}[i], K[n-1-i]} $ converges $ S_K$ weakly as $\varepsilon \rightarrow 0+$, we have 
        % Thus,
        % \[
        %     \lim_{\varepsilon \to 0^+}\int_{\s^{n-1}} h_L (u) \,dS_{K_\varepsilon [i-1],K_0[n-i]} (u)=\lim_{\varepsilon \to 0^+}\int_{\s^{n-1}} h_L (u) \,dS_{K_0[n-1]} (u)= \sum_{i=1}^n \frac{|B_q^{n-1}|}{2^{n-1}} h_L(-e_i)
        % \]
        % is positive and finite.
        Hence, $\lim_{\varepsilon \to 0^+} \frac{ |K_\varepsilon | - |K_0|}{\varepsilon}$ diverges to $+\infty$. \qedhere
    \end{itemize}
\end{proof}
\LocalMax*
% \begin{theorem}
%     \label{thm:local-max}
%     Let $p >1$. Let $\zeta$ be an asymmetric $L_p$-zonoid and let $u_1, \ldots,u_n$ be linear independent vectors on $\R^n$. Denote $Z_{\varepsilon} = [0,u_1] \oplus_p \ldots \oplus_p [0,u_n] \oplus_p \left(\varepsilon^{\frac{1}{p}} \zeta\right) $ for $\varepsilon \geq 0$. There exists $\varepsilon_0 = \varepsilon_0 (\zeta,u_1,\ldots,u_n)$ such that for any $0 \leq \varepsilon \leq \varepsilon_0$, we have
%     \begin{equation}
%         \label{eq:local-max}
%         | Z_{\varepsilon} \oplus_p - Z_{\varepsilon} | \leq 2^n | Z_{\varepsilon} |.
%     \end{equation}
%     Equality holds if and only if $\zeta$ is of the form $\zeta = [0,a_1u_1] \oplus_p \ldots \oplus_p [0,a_nu_n]$ for some $a_i \geq 0$.
% \end{theorem}

\begin{proof}
    By applying a linear transformation, it is enough to consider the case where $u_i = e_i$ for all $i \in [n]$. We first observe that if the generating measure of $\zeta$ is supported on $\{e_1,\ldots,e_n\}$, then $\zeta = \bigoplus_{i \in [n]}^{(p)} [0,a_ie_i]$, for some $ 0 \leq a_i$. Since 
    \[
    [0,e_i] \oplus_p \varepsilon^{\frac{1}{p}}[0,a_ie_i] = [0,(1+\varepsilon a_i^p)^{1/p}e_i],
    % \begin{cases}
    %     [0,(1+\varepsilon a_i^p)^{1/p}e_i] &a_i \geq 0
    %     \\
    %     [\varepsilon^{\frac{1}{p}}a_ie_i,e_i] &a_i \leq 0
    % \end{cases}
    \]
    we see that $Z_\varepsilon $ is an $L_p$-sum of $n$-segments. It follows from Proposition \ref{prop: RS-n-seg} that $|Z_\varepsilon
    \oplus_p -Z_\varepsilon| = 2^n |Z_\varepsilon|$.
    
    Now, we assume that the generating measure $\mu$ of $\zeta$ is not supported on $\{e_1,\ldots,e_n\}$.
    Using Lemma~\ref{lem:positive_orthant_support_function}, we have $Z_{\varepsilon} = \left(B_q^n \cap \R^n_+\right) \oplus_p \left(\varepsilon^{\frac{1}{p}} \zeta\right)$. Let
    % \[
    %     F(\varepsilon) = \frac{ | Z_{\varepsilon} \oplus_p - Z_{\varepsilon} | }{| Z_{\varepsilon} |}.
    % \]
    \[
        F(\varepsilon) =  | Z_{\varepsilon} \oplus_p - Z_{\varepsilon} | -2^n| Z_{\varepsilon} |.
    \]
    We claim that $F(\varepsilon) < F(0) $ for small enough $\varepsilon > 0$. Once we verify this claim, \eqref{eq:local-max} follows since $F(0) = 0$. It is enough to prove that
    \begin{equation}
         \lim _{\varepsilon \rightarrow 0^+} \frac{F (\varepsilon ) -F(0)}{\varepsilon} < 0.
    \end{equation}
    % Note that the claim implies that $F(\varepsilon) \leq F(0)$ for small enough $\varepsilon>0$ and then \eqref{eq:local-max} follows since $F(0) = 2^n$.
    Using Lemma~\ref{lem:positive_orthant_support_function}, we have $Z_{\varepsilon} \oplus_p - Z_{\varepsilon} =B_q^n \oplus_p \left(\varepsilon^{\frac{1}{p}}(\zeta \oplus - \zeta) \right)$. Thus,
    \begin{align}
        \lim _{\varepsilon \rightarrow 0^+} \frac{F (\varepsilon ) -F(0)}{\varepsilon}
        &= 
        % \frac{1}{|Z_{p,0}|^2} \left( |Z_{p,0}| 
        \lim _{\varepsilon \rightarrow 0^+} \frac{| Z_{\varepsilon} \oplus_p - Z_{\varepsilon} |-| Z_{0} \oplus_p - Z_{0} |}{\varepsilon}  - 2^n \lim _{\varepsilon \rightarrow 0^+} \frac{| Z_{\varepsilon} |-| Z_{0} |}{\varepsilon} 
        % \right)
        \\
        &= 
        % \frac{4^n}{|B_q^n|^2} \left( \frac{|B_q^n|}{2^n} 
        \lim _{\varepsilon \rightarrow 0^+} \frac{| Z_{\varepsilon} \oplus_p - Z_{\varepsilon} |-|B_q^n |}{\varepsilon}  - 2^n \lim _{\varepsilon \rightarrow 0^+} \frac{| Z_{\varepsilon} |-| B_q^n \cap\R^n_+ |}{\varepsilon} 
        % \\
        % &= \frac{2^n}{|B_q^n|} \left(  \lim _{\varepsilon \rightarrow 0} \frac{| Z_{\varepsilon} \oplus_p - Z_{\varepsilon} |-| Z_{p,0} \oplus_p - Z_{p,0} |}{\varepsilon}  - 2^n\lim _{\varepsilon \rightarrow 0} \frac{| Z_{\varepsilon} |-| Z_{p,0} |}{\varepsilon} \right)
        . \label{pf:local-max-001}
    \end{align}
    Now, using \eqref{eq:lp-mixed volume}, we obtain
    \begin{align}
        \lim _{\varepsilon \rightarrow 0} \frac{| Z_{\varepsilon} \oplus_p - Z_{\varepsilon} |-| Z_{0} \oplus_p - Z_{0} |}{\varepsilon}
        &=  \lim _{\varepsilon \rightarrow 0}  \frac{ \left|B_q^n \oplus_p \left(\varepsilon^{\frac{1}{p}}(\zeta \oplus_p - \zeta) \right)\right| - \left|B_q^n \right|}{\varepsilon}
        \\
        &=  \frac{1}{p} \int_{\s^{n-1}} h_{\zeta \oplus_p - \zeta}^p (u) h_{B_q^n}^{1-p} (u)  \,dS_{B_q^n } (u) .
    \end{align}
    Using Lemma~\ref{lem:LYZ-0inboundary}, if $\zeta \not \subset \R^n_+$, the second limit diverges to $+\infty$.  Since there is a negative sign in front of the second limit, the claim holds in this case. On the other hand, if $\zeta \subset \R^n_+$, we have
    \begin{align}
        &\lim _{\varepsilon \rightarrow 0} \frac{| Z_{\varepsilon} |-| Z_{0}  |}{\varepsilon}
        % \\
        % &=  \lim _{\varepsilon \rightarrow 0}  \frac{ \left|(B_q^n \cap \R^n_+) \oplus_p \left(\varepsilon^{\frac{1}{p}}\zeta \right)\right| - \left|B_q^n \cap \R^n_+ \right|}{\varepsilon}
        =  \frac{1}{p} \int_{\s^{n-1} \cap \R^n_+} h_{\zeta }^p(u) h_{B_q^n\cap\R^n_+}^{1-p} (u)\,dS_{B_q^n\cap \R^n_+} (u) 
        % \\
        % &= \frac{1}{p} \left(  \int_{\s^{n-1}\cap \R^n_+} h_{\zeta }^p (u) h_{B_q^n}^{1-p} (u) \,dS_{B_q^n} (u) + \frac{|B_q^{n-1}|}{2^{n-1}}\sum_{i=1}^n h_{\zeta}^p (-e_i) h_{B_q^n \cap \R^n_+}^{1-p}(-e_i)\right)
        .
    \end{align}
    For any $\delta \in \{-1,1\}^n$, denote $R_\delta^n = \{ x: \delta_i x_i \geq 0 \text{ for all } i\}$. Using \eqref{pf:local-max-001}, it is enough to show that for any $\delta \in \{-1,1\}^n$, 
    \begin{align}
        \int_{\s^{n-1}\cap \R^n_\delta} h_{\zeta \oplus_p - \zeta}^p (u)  h_{B_q^n}^{1-p} (u) \,dS_{B_q^n } (u)\leq  \int_{\s^{n-1} \cap \R^n_+} h_{\zeta }^p (u)h_{B_q^n\cap\R^n_+}^{1-p} (u) \,dS_{B_q^n\cap \R^n_+} (u),
    \end{align}
    where the inequality is strict for at least one $\delta$. Since
    \begin{align}
        \label{pf:measure-zp}
        h_{\zeta}^p (u) = \int_{\s^{n-1}} \langle v,u \rangle_+^p \, d\mu (v), 
    \end{align}
    we obtain
    \begin{align}
        \label{pf:measure-d-zp}
        h_{\zeta\oplus_p - \zeta}^p (u)&=h_{\zeta}^p (u) +h_{-\zeta}^p (u) = h_{\zeta}^p (u) +h_{\zeta}^p (-u) 
        \\
        &= \int_{\s^{n-1}} \langle v,u \rangle_+^p \, d\mu (v) +\int_{\s^{n-1}} \langle -v,u \rangle_+^p \, d\mu (v)
        = \int_{\s^{n-1}} |\langle v,u \rangle|^p \, d\mu (v).  
    \end{align}
    Since $\zeta \subset \R^n_+$, we have $h_{\zeta} (-e_i) = 0$ for any $i$. Thus, $\mu$ is supported on the positive orthant. Therefore, using \eqref{pf:measure-zp}, \eqref{pf:measure-d-zp} together with Fubini's theorem and the triangle inequality,
    \begin{align}
        \int_{\s^{n-1}\cap \R^n_\delta} h_{\zeta \oplus_p - \zeta}^p (u) h_{B_q^n}^{1-p} (u) \,dS_{B_q^n } (u)
        &= \int_{\s^{n-1}\cap \R^n_\delta} \left(\int_{\s^{n-1}} |\langle v,u \rangle|^p \, d\mu (v)\right)  h_{B_q^n}^{1-p} (u) \,dS_{B_q^n }(u)
        \\
        % &= \int_{\s^{n-1}} \int_{\s^{n-1}\cap \R^n_\delta} |\langle v,u \rangle|^p   h_{B_q^n}^{1-p} (u) \,dS_{B_q^n }(u) \, d\mu (v)
        % \\
        &= \int_{\s^{n-1}} \int_{\s^{n-1}\cap \R^n_+} \left|\sum_{i=1}^n \delta_i v_i u_i\right|^p   h_{B_q^n}^{1-p} (u) \,dS_{B_q^n }(u) \, d\mu (v)
        \\
        &\leq \int_{\s^{n-1}} \int_{\s^{n-1}\cap \R^n_+} \left|\langle v,u \rangle\right|^p  h_{B_q^n}^{1-p} (u) \,dS_{B_q^n }(u) \, d\mu (v)
        \\
        %&= \int_{\s^{n-1}\cap \R^n_+} \int_{\s^{n-1}}  \left|\langle v,u \rangle\right|^p   h_{B_q^n}^{1-p} (u) \, d\mu (v) \,dS_{B_q^n }(u) \\
        &= \int_{\s^{n-1}\cap \R^n_+} h_{\zeta}^p  (u) h_{B_q^n}^{1-p} (u) \,dS_{B_q^n } (u).
    \end{align}
    Here, the triangle inequality is valid since $\mu$ is supported on $\R^n_+$ or $v \in \R^n_+$. Now, observe that there is exactly one inequality in the above computation. Suppose for contradiction that this inequality is in fact equality for every $\delta\in\{-1,1\}^n$. Then for every $\delta\in\{-1,1\}^n$, the equality
        \begin{equation}
    \label{pf:equality-case-local-max}
        \int_{\s^{n-1} \cap \R^n_+}\left|\sum_{i=1}^n \delta_i v_i u_i \right|^p \,d\mu(v)= \int_{\s^{n-1}\cap \R^n_+} \left(\sum_{i=1}^n v_i u_i \right)^p \,d\mu(v).
    \end{equation}
    holds for almost every $u\in \mathbb{S}^{n-1}\cap \mathbb{R}^n_+$. But then for every $\delta$ and for almost every $u$, the equality
    \begin{align}\label{eq:delta_u_v_triangle_inequality}
        \left|\sum_{i=1}^n \delta_i v_i u_i \right| = \sum_{i=1}^n v_i u_i,
    \end{align}
    holds for almost every $v\in \mathbb{S}^{n-1}$. However, the equality \eqref{eq:delta_u_v_triangle_inequality} holds for all $\delta$ if and only if $v=e_i$ for some $i$. But 
    \begin{align*}
        \mu\left(\left(\mathbb{S}^{n-1} \cap \mathbb{R}_{+}^n\right) \backslash\left\{e_1, \ldots, e_n\right\}\right)>0,
    \end{align*}
    so the requirement that for almost every $v$, $v=e_i$ for some $i$ is impossible, and therefore there must exist at least one $\delta$ so that the equality \eqref{pf:equality-case-local-max} does not hold, and so we have the required strict inequality for at least one $\delta$.

%    Now, we observe that only one inequality appears in the above computation, and equality holds if and only if for every $\delta \in \{-1,1\}^n$, for almost every $u \in \s^{n-1} \cap \R^n_+$, 
%    \begin{equation}
    %\label{pf:equality-case-local-max}
%        \int_{\s^{n-1} \cap \R^n_+}\left|\sum_{i=1}^n \delta_i v_i u_i \right|^p \,d\mu(v)= \int_{\s^{n-1}\cap \R^n_+} \left(\sum_{i=1}^n v_i u_i \right)^p \,d\mu(v).
%    \end{equation}
%    Note that 
%    \[
%        \left|\sum_{i=1}^n \delta_i v_i u_i \right| \le \sum_{i=1}^n v_i u_i \text{ for all } \delta \text{ and  for almost every } u \in \R^n_+ \cap \s^{n-1},
%    \]
%    with equality if and only if $v = e_i$ for some $i$. Since
%$
%\mu\left(\left(S^{n-1} \cap \mathbb{R}_{+}^n\right) \backslash\left\{e_1, \ldots, e_n\right\}\right)>0,
%$
%then the inequality is strict for at least one choice of $\delta$, and hence the sum over all orthants is strict.
\end{proof}

\subsection{An average version of Conjecture~\ref{conj:projection-cap-conj}} \label{sec: average}
Next, we consider a weaker version of Conjecture~\ref{conj:projection-cap-conj}, where we take the average over all $n$-dimensional subspaces. 
% Using the Cauchy-Kubota formula, the $n$-intrinsic volume of $K$ can be expressed as
%     \begin{equation}
%         V_n (K) = \frac{\tbinom{m}{n}\omega_{m}}{\omega_{m-n}\omega_n} \int_{G(m,n)} |P_E K| d \mu(E),
%     \end{equation}
%     where the integration is taken with respect to the Haar probability measure on the Grassmannian $\mathrm{G}(m, n)$ of $n$-dimensional subspaces in $\mathbb{R}^m$. 
    Motivated by this representation, we ask for the largest constant $a_{n,q}$ such that
    \[
        V_n(B_q^m\cap \R^m_+) \geq a_{n,q} V_n(B_q^m).
    \]
    We note that Conjecture~\ref{conj:projection-cap-conj} suggests one should have $a_{n,q} \geq \frac{1}{2^n}$, which is an optimal constant independent of $q$, since for $q =\infty$ the inequality is an equality (corresponding to the $\ell_\infty$-ball in Conjecture~\ref{conj:projection-cap-conj}). As a consequence of Proposition~\ref{prop:Projection-ineq-hyperplane} and Theorem~\ref{thm:Rogers-Shephard-ineq-2-zonoids}, the lower bound is verified in the case $q = 2$ and in the case $ n = m-1$ for any 1-unconditional convex body. We further confirm the bound for all $q\ge 2$ in Proposition~\ref{prop:Projection-ineq-weighted}, relying on a Loomis--Whitney type inequality for intrinsic volumes due to Campi and Gronchi~\cite{CG-11}. In particular, if $Z$ is a zonoid in $\mathbb{R}^m$, then 
    \begin{equation}
        \label{eq:aver-intrisic-vol}
        V_n(Z) \leq \frac{1}{m-n} \sum_{i=1}^m V_n(P_{e_i^\perp}Z).
    \end{equation}
    Equality holds if and only if $Z$ is a rectangular parallelotope with facets parallel to the coordinate hyperplanes. It was conjectured that \eqref{eq:aver-intrisic-vol} holds for any convex body $K$, see \cite{CGG-16}.

    We recall that a functional $\Phi$ defined on a class of all compact convex non-empty sets is called a valuation if it has the following additivity property,
    $$
        \Phi(A \cup B)=\Phi(A)+\Phi(B)-\Phi(A \cap B)
    $$
    whenever $A, B, A \cup B $ are in the class. It is well-known that intrinsic volume is a valuation; see \cite[Section 6]{S-93} and also \cite{S-18}. For any $j$, we define $e_j^{+} := \{x: \langle x,e_j \rangle \geq 0 \}$ and similarly for $e_j^{-}$.

\begin{lemma}
    Let $K$ be a 1-unconditional convex body and $1 \leq i \leq m $. Then,
    \begin{equation}
        \label{eq:quermassintegral-1-unconditional}
        2^i V_{n} \left(K \cap \bigcap_{j=1}^i e_j^+ \right) = \sum_{j=0}^i \binom{i}{j} V_{n} \left(K \cap \bigcap_{k=1}^j e_k^\perp \right).
    \end{equation}
\end{lemma}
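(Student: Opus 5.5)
The plan is induction on $i$, using only two properties of $V_n$: it is a valuation on compact convex sets, and it is invariant under rigid motions, in particular under the coordinate reflections $R_j:x\mapsto x-2x_je_j$. Write $K_i:=K\cap\bigcap_{j=1}^i e_j^+$ and $L_j:=K\cap\bigcap_{k=1}^j e_k^\perp$, with $K_0=L_0=K$.

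\textbf{Base case $i=1$.} Split $K=(K\cap e_1^+)\cup(K\cap e_1^-)$. Both pieces are compact and convex, their union $K$ is convex, and their intersection is $K\cap e_1^\perp$. The valuation property therefore gives
\[
V_n(K)=V_n(K\cap e_1^+)+V_n(K\cap e_1^-)-V_n(K\cap e_1^\perp).
\]
Since $K$ is unconditional, $R_1$ maps $K\cap e_1^+$ onto $K\cap e_1^-$, so these two terms are equal. Hence $2V_n(K\cap e_1^+)=V_n(K)+V_n(K\cap e_1^\perp)$, which is \eqref{eq:quermassintegral-1-unconditional} for $i=1$.

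\textbf{General step.} The same argument applies to any set $A$ that is invariant under $R_j$, giving
\[
2V_n(A\cap e_j^+)=V_n(A)+V_n(A\cap e_j^\perp).
\]
Take $A=K\cap\bigcap_{k\le i-1}e_k^+$ and $j=i$. This $A$ is still symmetric under $R_i$, because the constraints $x_k\ge0$ for $k<i$ do not involve the $i$-th coordinate. Iterating along the coordinates $i,i-1,\dots,1$ expands $2^iV_n(K_i)$ as a sum over subsets $S\subset[i]$:
\[
2^iV_n(K_i)=\sum_{S\subset[i]}V_n\Bigl(K\cap\bigcap_{k\in S}e_k^\perp\Bigr),
\]
where the positivity constraints on the remaining coordinates have all been removed. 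At each stage the set being split is convex and symmetric in the relevant coordinate, so the valuation identity applies.

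\textbf{Grouping by cardinality.} Because $K$ is 1-unconditional, it is invariant under coordinate permutations. A permutation sending $S$ to $\{1,\dots,|S|\}$ is an isometry carrying $K\cap\bigcap_{k\in S}e_k^\perp$ onto $L_{|S|}$. So each term depends only on $|S|=j$, and there are $\binom{i}{j}$ subsets of size $j$. This yields the stated formula.

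The only point needing care is checking that every intermediate set is convex and invariant under the reflection used at that stage. This holds because orthant constraints and coordinate-hyperplane constraints on the other coordinates commute with $R_j$. If $K\cap\bigcap_{k\in S}e_k^\perp$ has dimension smaller than $n$, its $V_n$ is computed in the usual way and may vanish; the identity still holds, since no normalization was used anywhere.
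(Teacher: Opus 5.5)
Your proof is correct and uses exactly the paper's ingredients: the valuation property of $V_n$ applied to the split $A=(A\cap e_j^+)\cup(A\cap e_j^-)$, reflection invariance coming from unconditionality, and permutation invariance coming from 1-unconditionality. The difference is only in bookkeeping. The paper inducts on the binomial identity itself, applies the hypothesis to both $K$ and $K\cap e_{i+1}^\perp$, and merges terms with Pascal's rule, implicitly using permutation invariance to identify $V_n(K\cap\bigcap_{k\le j}e_k^\perp\cap e_{i+1}^\perp)$ with $V_n(K\cap\bigcap_{k\le j+1}e_k^\perp)$. You instead expand over subsets $S\subset[i]$ first and then count, which makes it explicit that permutation invariance is needed only in the final grouping step.
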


\begin{proof}
    We will prove the lemma by induction on $i$. The case $i=1$ follows directly from the fact that the intrinsic volume is a valuation. Denote 
    $$
        K_i^+ = K \cap \bigcap_{j=1}^i e_j^+,\qquad \text{ and }\qquad   K_i = K \cap \bigcap_{j=1}^i e_j^\perp.
    $$ 
    Thus,
    % \begin{equation}
    \begin{align*}
            &V_{n} (K) 
            = 2^i V_{n} \left(K_i^+\right) - \sum_{j=1}^i \tbinom{i}{j} V_{n} \left(K_j \right)
            \\
            &= 2^i \left(2V_n \left(K_i^+ \cap e_{i+1}^+\right) - V_n \left(K_i^+ \cap e_{i+1}^\perp\right) \right) - \sum_{j=1}^i \tbinom{i}{j} V_n (K_j)
            % \\
            % &= 2^{i+1} V_n \left(\cap_{j=1}^{i+1} e_j^+ \cap K\right) - 2^iV_n \left(\cap_{j=1}^{i} e_j^+ \cap K \cap e_{i+1}^\perp\right)  - \sum_{j=1}^i \binom{i}{j} V_n (\cap_{k=1}^j e_k^\perp \cap K )
            \\
            &= 2^{i+1} V_n \left(K_{i+1}^+\right) - 
            V_n (K\cap e_{i+1}^\perp)  
            - \sum_{j=1}^i \tbinom{i}{j} V_n \left(K_j \cap e_{i+1}^\perp\right)
            - \sum_{j=1}^i \tbinom{i}{j} V_n (K_j ),
    \end{align*}
    % \end{equation}
    where the first equality follows from the inductive hypothesis applied to $K$, the second equality uses the valuation property applied to $K_i^+ \cap e_{i+1}^+$ and $K_i^+ \cap e_{i+1}^-$, and the final equality follows from the inductive hypothesis applied to $K \cap e_{i+1}^\perp$. The proof is completed by combining terms.
\end{proof}

\ProjectionIneqWeighted*
% \begin{prop}
%     \label{prop:Projection-ineq-weighted}
%     Let $m\ge n$. For any $1$-unconditional zonoid $K\subset\mathbb{R}^m$, we have
%     \begin{equation}
%         \label{eq:Projection-ineq-weighted}
%         V_n \left(K \cap \R^m_+ \right) \geq \frac{1}{2^{n}} V_n \left(K  \right).
%     \end{equation}
%     % In particular, we obtain
%     % \begin{equation}
%     %     \label{eq:Projection-ineq-weighted}
%     %         V(B_q^m [n],B_2^m[m-n]) < 2^n V \left(B_q^{m} \cap \R^m_+ [n],B_2^m[m-n]\right).
%     % \end{equation}
%     Equality holds if and only if $K$ is an $\ell_\infty$-ball.
% \end{prop}

\begin{proof}
    It is enough to prove that for any $j$,
    \begin{equation}
        \label{eq:aver-intrisic-vol-ind}
        \binom{m}{j} V_n \left(K \cap \bigcap_{k=1}^j e_k^\perp \right) \geq \binom{m-n}{j} V_n \left(K  \right).
    \end{equation}
    Indeed, using \eqref{eq:quermassintegral-1-unconditional} with $i =m$, and \eqref{eq:aver-intrisic-vol-ind}, we have
    \begin{equation}
        \begin{split}
            2^m V_n \left(K \cap \R^m_+ \right) 
            = \sum_{j=0}^{m-n} \binom{m}{j} V_n \left(K \cap \bigcap_{k=1}^j e_k^\perp \right)
            % \\
            % &
            \geq\sum_{j=0}^{m-n} \binom{m-n}{j} V_n \left(K  \right) 
            % \\
            % &
            = 2^{m-n} V_n \left(K  \right).
        \end{split}
    \end{equation}
    We prove \eqref{eq:aver-intrisic-vol-ind} by induction on $j$. The case $j=1$ is precisely \eqref{eq:aver-intrisic-vol}. Assume the statement holds for some $j\ge 1$. Since for a $1$-unconditional body, the intersection with a coordinate subspace coincides with the orthogonal projection onto that subspace, and since the orthogonal projection of a zonoid is a zonoid, we may apply \eqref{eq:aver-intrisic-vol},
    \begin{align*}
         V_n \left(K \cap \bigcap_{k=1}^{j} e_k^\perp \right) &\leq \frac{1}{m-n} \sum_{i=1}^m V_n \left(K \cap \bigcap_{k=1}^{j} e_k^\perp \cap e_i^\perp\right)
         \\
         &= \frac{j}{m-n}  V_n \left(K \cap \bigcap_{k=1}^{j} e_k^\perp \right) + \frac{m-j}{m-n}  V_n \left(K \cap \bigcap_{k=1}^{j+1} e_k^\perp \right).
    \end{align*}
    Thus, we obtain
    \begin{equation}
        V_n \left(K \cap \bigcap_{k=1}^{j+1} e_k^\perp \right) 
        \geq  
        \frac{m-n-j}{m-j} V_n \left(K \cap \bigcap_{k=1}^{j} e_k^\perp \right) 
        \geq \frac{\binom{m-n}{j}(m-n-j)}{\binom{m}{j}(m-j)} V_n \left(K  \right).
    \end{equation}
    We complete the proof of the inequality using the binomial identity.

    To verify the equality conditions, we first consider the case when $K = aB_\infty^m$. Observe that  $|P_E (a B_\infty^m)| = 2^n |P_E ( aB_\infty^m \cap \R^m_+)|.$ Using the definition of intrinsic volume, 
    \[
    V_n (K) = \frac{\tbinom{m}{n}\omega_{m}}{\omega_{m-n}\omega_n} \int_{G(m,n)} |P_E K|_n d \mu(E),
    \]
    we have that $V_n(K) = 2^nV_n(K \cap \R^m_+) $.
    Now, we assume that equality holds. That is,
    \[
         V_n(K) = 2^n V_n(K \cap \R^m_+).
    \]
    This implies that for all $j =0,\ldots,m-n$, we have 
    \[
        \binom{m}{j} V_n \left(K \cap \bigcap_{k=1}^j e_k^\perp \right) = \binom{m-n}{j} V_n \left(K  \right).
    \]
    When $j = 1$, this is precisely \eqref{eq:aver-intrisic-vol}, and equality holds in \eqref{eq:aver-intrisic-vol}  if and only if $K$ is a parallelotope with facets parallel to the coordinate hyperplanes. Since $K$ is 1-unconditional, $K = a B_\infty^m$ for some $ a \geq 0$.
\end{proof}

\begin{remark}
    Proposition~\ref{prop:Projection-ineq-weighted} gives the affirmative answer to Conjecture~\ref{conj:projection-cap-conj} in the average sense for $q \geq 2$ since the $\ell_q$-ball for $q \geq 2$ is a $1$-unconditional zonoid. We also note that the case $n=m-1$ follows from the result of Barthe and Naor~\cite{BN-02}. Indeed, one has
    \begin{equation}
        \label{eq:Vm-1-ratio}
         \frac{V_{m-1} (B_q^m \cap \R^m_+)}{V_{m-1} (B_q^m)} = \frac{|\partial (B_q^m \cap \R^m_+)|}{|\partial B_q^m |}
         = \frac{\frac{|\partial B_q^m |}{2^m} + \frac{m| B_q^{m-1} |}{2^{m-1}}}{|\partial B_q^m |} = \frac{1}{2^m} + \frac{m}{2^{m-1}} \frac{| B_q^{m-1} |}{|\partial B_q^m |}.
    \end{equation}
    Using the Cauchy-Kubota formula, we have
    \[
        \frac{|\partial B_q^m |}{| B_q^{m-1} |}=
         \frac{\omega_{m}}{\omega_{m-1}} \int_{G(m,m-1)} \frac{|P_E B_q^m|}{| B_q^{m-1} |} d \mu(E),
    \]
    and it was shown in \cite{BN-02} that for every unit vector $u$ the function $q\mapsto |P_{u^\perp}B_q^m|/|B_q^{m-1}|$ is non-decreasing on $[1,\infty)$. Consequently, the ratio \eqref{eq:Vm-1-ratio} is non-increasing in $q$. It is thus enough to verify the bound in the limit $q\to\infty$, which is an equality.
\end{remark}

\section{Symmetric bodies} \label{sec:sym-bodies}

Inspired by the Mahler conjecture for the symmetric case (see \cite{FMZ-23}), we present in this section our study of the maximum of $|K\oplus_p(-K)|/|K|$ in the plane when we restrict to symmetric convex bodies, and more generally for asymmetric $L_1$-zonoids in $\R^n$. 
% We recall that a convex body $K$ is origin-symmetric if $K = -K$. A convex body is symmetric if there exists a point $a \in K$ such that $K-a$ is origin symmetric.
We follow the idea from \cite{CG-06} by approximating a zonoid with a zonotope and then using shadow systems to reduce the number of segments.

Now, we recall the basic knowledge of shadow systems. A shadow system $X_t$ along a vector $v\in \R^n$ of points from $\R^n$ is a family of sets represented as follows:
\[
    X_t = \{ x_i + t a_i v \}_{i\in I},
\]
where $ t\in [t_0,t_1]$, $x_i,v \in \R^n$, $a_i \in \R$ and $I$ is an arbitrary set of indices. It follows from the definition that the shadow systems of convex bodies are convex hulls of the shadow systems of points.

% \begin{theorem}\label{th:center-sym}
%     Let $p\geq 1$. For any convex body $K \subset \R^2$ having a center of symmetry and containing the origin, we have
%     \begin{equation}
%         |K \oplus_p -K | \leq \left(\frac{2\Gamma(1+\frac{1}{q})^2}{\Gamma(1+\frac{2}{q})} +2 \right)|K|,
%     \end{equation}
%     where $q$ satisfies $\frac{1}{p} + \frac{1}{q} = 1$.
%     Equality holds if $K$ is a parallelogram with vertex at the origin.
% \end{theorem}

% \begin{conj}
%     We conjecture that among planar convex bodies having a center of symmetry, for $p>1$, there is equality in Theorem~\ref{th:center-sym} only for parallelograms with a vertex at the origin.
% \end{conj}
     
% \begin{conj}
%  Let $K$ be a convex body in $\R^n$ having a center of symmetry and containing the origin and let $p>1$. Then
%   \[
%   |K \oplus_p -K|\le \kappa^{(s)}_{n,p}|K|,\quad\hbox{where}\quad  \kappa^{(s)}_{n,p}=\sum_{i=0}^n\frac{\binom{n}{i}}{\binom{n/q}{i/q}}.
%   \]
%   with equality for $K=[0,1]^n$.
% \end{conj}
% The fact that equality holds  for $K=[0,1]^n$ follows from formula \eqref{eq:lemAB-003} applied to $K_\delta=[0,1]^n$ and $K_{-\delta}=[-1,0]^n$ and get
% \[
% |[0,1]^n\oplus_p[-1,0]^n|=\sum_{i=0}\sum_{E } \binom{n/q}{i/q}^{-1}      \left| P_E[0,1]^n\right| \left| P_{E^\perp} [-1,0]^n \right|=\sum_{i=0}^n\frac{\binom{n}{i}}{\binom{n/q}{i/q}}.
% \]

% \subsection{Another proof maybe}
\RGsymZonoid*
% \begin{theorem}
% \label{thm:RG-sym-zonoid}
%     Let $p \ge 1$, $n\ge2$ and $K$ be an asymmetric zonoid  in $\R^n$ containing the origin. Then
%     \begin{equation} 
%         \label{eq:RG-sym-zonoid}
%         | K \oplus_p -K| \leq \kappa^{(s)}_{n,p} |K|, \quad \text{where } \kappa^{(s)}_{n,p} :=\sum_{i=0}^n\frac{\binom{n}{i}}{\binom{n/q}{i/q}}
%     \end{equation}
%     where $q$ satisfies $\frac{1}{p}+\frac{1}{q}=1$. Equality holds if $K$ is a parallelotope with vertex at the origin.
% \end{theorem}

\begin{proof}
    By continuity and a standard density argument, it is enough to prove this when $Z$ is a zonotope. We are going to prove that any parallelotope with a vertex at the origin is a maximizer. Now, we write $Z$ as the sum of one-sided segments, that is,
    \[
        Z = \sum_{i=1}^m [0,v_i] 
    \]
    for some $v_i \in \R^n \setminus \{0\}$. Denote $\Lambda = \{ v_1, \ldots.v_m\}$. We may assume that there exists an index $i$ such that $\Lambda \setminus \{v_i\}$ spans $\R^n$. If not, then $\Lambda$ consists of exactly $n$ vectors. Indeed, since $\Lambda$ spans $\R^n$, there exist $\{v_{i_1},\ldots,v_{i_n} \}$ spanning $\R^n$ and if there is another vector $v_{i_{n+1}}$, then $\Lambda \setminus \{v_{i_{n+1}}\}$ spans $\R^n.$
    
    There exists a positive real number $a$ such that $\Lambda_t$ consists of
    \begin{equation}
        w_1 (t)= (1+at)v_1, \quad w_i (t)= v_i - tv_1 \frac{\langle v_1,v_i\rangle}{\|v_1\|_2^2}, 
    \end{equation}
    where $i =2,\ldots,m$ and $t \in[-\frac{1}{a},1]$, see \cite{CG-06} for precisely expression of $a$. Here, $\Lambda_0 = \Lambda$, at $t=1$, $w_1(1)$ is orthogonal to the other vectors in $\Lambda_1$ and at $ t = -\frac{1}{a}$, $w_{1}(-\frac{1}{a}) = 0$.
    % Suppose $K$ is not an affine image of unit cube. There exists
    % We may assume that there exist indices $i \neq j$ such that $v_j \not\in v_i^\perp$. Otherwise, for distinct indices $i,j$, we have $v_i \perp v_j$. Hence, there are at most $n$ vectors. Since $K$ is full-dimensional, it follows that $K$ is parallelotope. Without of loss of generality, we assume that $i=1$.
    Let
    $$
        Z(\Lambda_t) =[0,w_1(t)] + \ldots + [0,w_m(t)]
    $$ 
    be an asymmetric zonotope generated by $\Lambda_t$.
    It was proved by Campi and Gronchi in \cite{CG-06} that the volume $| Z(\Lambda_t) |$ is constant. Weberndorfer proved that  $Z(\Lambda_t)$ is a shadow system in \cite[Theorem 3.2]{W-13}.
    It was proved in \cite[Theorem 2.3]{BC-08} that for any shadow systems $\{X_t\}_{t\in I}$ and $\{Y_t\}_{t\in I}$, one has $\{X_t \oplus_p Y_t\}_{t\in I}$ is also a shadow system. Thus, $Z(\Lambda_t) \oplus_p - Z(\Lambda_t)$ is a shadow system. Rogers and Shephard proved in \cite{RS-58-2} that the volume of a shadow system of convex sets is a convex function in $t$. Therefore, for any $t$, we have that the function
    $$
        R:t \mapsto \frac{|Z(\Lambda_t) \oplus_p -Z(\Lambda_t)|}{|Z(\Lambda_t)|}
    $$ 
    is convex for $t \in [-\frac{1}{a},1]$. Thus, the maximum is attained at either $ t = -\frac{1}{a}$ or $t =1$. Now, we repeat the process finitely many times, and we arrive at an affine image of a unit cube. Note that the process ends since the number of segments is either reduced by 1 or, after at most $n$ steps, the number of vectors is reduced. 

    If the origin is in the interior $Z$, then there exists a shadow system $\{Z_t = Z+tu\}_{t \in [-d,c]}$ for some $0 <c,d$ such that $Z_0 = Z$ and the origin belongs to the boundaries of $Z_{-d}$ and $Z_{c}$. If the origin is not at the vertex, then there exists a shadow system (family of translations of $Z$ along the face) whose endpoints are zonotopes with one vertex at the origin. By the same reasoning as above, we arrive at a parallelotope. 
    Next, by applying a linear transformation, we may assume that $Z=[0,1]^n$. Using \cite[Lemma 15]{MNZ-25}, we get
    \[
    |[0,1]^n\oplus_p[-1,0]^n|=\sum_{i=0}\sum_{E } \binom{n/q}{i/q}^{-1}      \left| P_E[0,1]^n\right| \left| P_{E^\perp} [-1,0]^n \right|=\sum_{i=0}^n\frac{\binom{n}{i}}{\binom{n/q}{i/q}},
    \]
    where the sum is taken over all coordinate subspaces.
\end{proof}

Observe that in $\R^2$, the symmetric convex bodies are just the asymmetric zonoids. Thus, we obtain the following corollary. The result does not apply beyond dimension $3$ since the class of centered zonoids is not dense in the class of origin-symmetric convex bodies; see \cite{GW-93} for further discussion. We note that the following corollary can also be proved using the approach of Bianchini and Colesanti \cite{BC-08}, which is a different shadow system:  approximating the convex bodies with a polygon and using the shadow system to reduce the number of vertices in such a way that the bodies are symmetric with respect to a point.
\begin{corollary}\label{cor:center-sym}
    Let $K \subset \mathbb{R}^2$ be a convex body with a center of symmetry and containing the origin, and let $p>1$. Then
    % Let $p\geq 1$. For any convex body with a  $K \subset \R^2$ containing the origin, we have
    \begin{equation}
        |K \oplus_p -K | \leq \left(\frac{2\Gamma(1+\frac{1}{q})^2}{\Gamma(1+\frac{2}{q})} +2 \right)|K|,
    \end{equation}
    where $q$ satisfies $\frac{1}{p} + \frac{1}{q} = 1$.
    Equality holds if $K$ is a parallelogram with a vertex at the origin.
\end{corollary}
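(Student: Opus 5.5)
The plan is to reduce Corollary~\ref{cor:center-sym} to Theorem~\ref{thm:RG-sym-zonoid} with $n=2$, and then to evaluate the constant. Let $K\subset\R^2$ have center of symmetry $a$ and contain the origin. Then $K-a$ is an origin-symmetric planar convex body. Every origin-symmetric planar convex body is a zonoid: origin-symmetric polygons are zonotopes, since their edges come in parallel pairs of equal length and the polygon is a translate of the Minkowski sum of half its edges. These polygons are dense, so the claim passes to limits.

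Writing $K-a=\sum_i[-w_i,w_i]$, or its Hausdorff limit, we get
\[
K=a+\sum_i[-w_i,w_i]=\Big(a-\sum_i w_i\Big)+\sum_i[0,2w_i].
\]
So $K$ is a translate of an asymmetric $L_1$-zonoid $\sum_i[0,2w_i]$, with the translation chosen so that $K\ni 0$. In the proof of Theorem~\ref{thm:RG-sym-zonoid}, the phrase ``asymmetric $L_1$-zonoid containing the origin'' is used exactly for such translates: the proof explicitly handles the origin lying in the interior or on a non-vertex boundary point via translation shadow systems. Hence Theorem~\ref{thm:RG-sym-zonoid} applies to $K$. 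To be safe, I would check that the density step is compatible with the origin remaining in $K$. This works by approximating $K-a$ with symmetric polygons $P_k\subset K-a$ that converge to $K-a$, and, if $0\in\partial K$, slightly adjusting the translation. Both sides of the inequality are continuous in the Hausdorff metric for bodies containing the origin.

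It remains to compute $\kappa^{(s)}_{2,p}=\sum_{i=0}^2\binom{2}{i}\binom{2/q}{i/q}^{-1}$. The terms $i=0$ and $i=2$ each give $1$, since $\binom{2/q}{0}=\binom{2/q}{2/q}=1$. The term $i=1$ gives
\[
2\,\frac{\Gamma(1+\frac1q)^2}{\Gamma(1+\frac2q)},
\]
so the constant matches the one in the statement. The equality case for a parallelogram with a vertex at the origin follows from the equality statement in Theorem~\ref{thm:RG-sym-zonoid}, since any such parallelogram is an affine image of $[0,1]^2$ with the origin at a vertex.

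The only delicate step is the identification of planar centrally symmetric bodies with translated asymmetric $L_1$-zonoids containing the origin, together with the approximation that keeps the origin inside. Everything else is direct substitution.
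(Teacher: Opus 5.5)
Your proposal is correct and follows the paper's route. The paper derives the corollary directly from Theorem~\ref{thm:RG-sym-zonoid} with $n=2$, noting that planar bodies with a center of symmetry are exactly the asymmetric $L_1$-zonoids, and your evaluation $\kappa^{(s)}_{2,p}=2+2\Gamma(1+\frac1q)^2/\Gamma(1+\frac2q)$ matches. The paper also gives a second, direct proof using a vertex-reducing shadow system on centrally symmetric polygons. Your caution about translates is unnecessary: if a zonotope $c+\sum_i[-w_i,w_i]$ contains $0$, then $c=\sum_i t_iw_i$ with $|t_i|\le 1$, so it equals $\sum_i[(t_i-1)w_i,(t_i+1)w_i]$, already a sum of segments each containing the origin.
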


\begin{proof}[Second proof of Corollary~\ref{cor:center-sym}]
    By continuity and a standard density argument, it is enough to prove the result in the case where $K$ is a polygon. We are going to prove that parallelograms with a vertex at the origin are maximizers. Since $K$ has a center of symmetry, $K$ has an even number of vertices. Suppose that $K$ has at least 6 vertices. If the origin is in the interior of $K$, then there exists a shadow system $\{K_t = K+tu\}_{t \in [-d,c]}$ for some $0 <c,d$ and $ u \in \s^1$ such that $K_0 = K$, and the origin belongs to the boundaries of $K_{-d}$ and $K_{c}$. See the picture below.
%     \begin{tikzpicture}
%     \definecolor{myblue}{RGB}{20,120,230}

%     % =========================================================
%     % Parameters: vertices of an origin-symmetric polygon
%     % If v is a vertex, then -v is also a vertex.
%     % =========================================================
%     \coordinate (A1) at (-3.2,-0.2);
%     \coordinate (A2) at (-2.8,-1.4);
%     \coordinate (A3) at (-1.2,-2.0);
%     \coordinate (A4) at ( 2.0,-1.6);
%     \coordinate (A5) at ( 3.2, 0.2);
%     \coordinate (A6) at ( 2.8, 1.4);
%     \coordinate (A7) at ( 1.2, 2.0);
%     \coordinate (A8) at (-2.0, 1.6);

%     \begin{scope}[shift={(0,0)}]

%         % =====================================================
%         % Axes
%         % =====================================================
%         \draw[line width=1.1pt] (-4.4,-1.4) -- (4.4,-1.4);
%         \draw[line width=1.1pt] (-2.8,-3.0) -- (-2.8,3.0);

%         % =====================================================
%         % Origin-symmetric polygon K
%         % =====================================================
%         \fill[gray!15,opacity=.45]
%             (A1) -- (A2) -- (A3) -- (A4) --
%             (A5) -- (A6) -- (A7) -- (A8) -- cycle;

%         \draw[line width=1.2pt]
%             (A1) -- (A2) -- (A3) -- (A4) --
%             (A5) -- (A6) -- (A7) -- (A8) -- cycle;

%         % =====================================================
%         % Label
%         % =====================================================
%         \node[text=black] at (0.4,0.6) {$K$};

%     \end{scope}
% \end{tikzpicture}
\begin{center}
\begin{tikzpicture}[scale =.4]
    \definecolor{myred}{RGB}{220,0,0}
    \definecolor{myblue}{RGB}{20,120,230}

    % =========================================================
    % Base origin-symmetric polygon P
    % =========================================================
    \coordinate (P1) at (-3.2,-0.35);
    \coordinate (P2) at (-2.8,-1.55);
    \coordinate (P3) at (-1.2,-2.15);
    \coordinate (P4) at ( 2.0,-1.75);
    \coordinate (P5) at ( 3.2, 0.05);
    \coordinate (P6) at ( 2.8, 1.25);
    \coordinate (P7) at ( 1.2, 1.85);
    \coordinate (P8) at (-2.0, 1.45);
    % \coordinate (P1) at (-3.0,-0.6);
    % \coordinate (P2) at (-1.2,-2.0);
    % \coordinate (P3) at ( 1.2,-2.0);
    % \coordinate (P4) at ( 3.0,-0.6);
    % \coordinate (P5) at ( 3.0, 0.6);
    % \coordinate (P6) at ( 1.2, 2.0);
    % \coordinate (P7) at (-1.2, 2.0);
    % \coordinate (P8) at (-3.0, 0.6);

    % =========================================================
    % Translations
    % Left copy: origin is the right endpoint of the bottom edge
    % Middle copy: origin is the midpoint of the bottom edge
    % Right copy: origin is the left endpoint of the bottom edge
    % =========================================================
    \pgfmathsetmacro{\txR}{2.9}
    \pgfmathsetmacro{\tyR}{ -.0875}

    \pgfmathsetmacro{\txC}{ 0.0}
    \pgfmathsetmacro{\tyC}{ 0.8}

    \pgfmathsetmacro{\txL}{ -2.2}
    \pgfmathsetmacro{\tyL}{ 1.4732}

    \begin{scope}[shift={(0,0)}]

        % =====================================================
        % Axes
        % =====================================================
        \draw[->,line width=1.1pt] (-5.5,0) -- (6.5,0);
        \draw[->,line width=1.1pt] (0,-2.3) -- (0,4.3);

        % =====================================================
        % Left translated polygon  K_{-d}
        % =====================================================
        \draw[myred,line width=1pt]
            ($ (P1)+(\txL,\tyL) $) --
            ($ (P2)+(\txL,\tyL) $) --
            ($ (P3)+(\txL,\tyL) $) --
            ($ (P4)+(\txL,\tyL) $) --
            ($ (P5)+(\txL,\tyL) $) --
            ($ (P6)+(\txL,\tyL) $) --
            ($ (P7)+(\txL,\tyL) $) --
            ($ (P8)+(\txL,\tyL) $) -- cycle;

        % =====================================================
        % Middle translated polygon  K
        % =====================================================
        \draw[line width=1.5pt]
            ($ (P1)+(\txC,\tyC) $) --
            ($ (P2)+(\txC,\tyC) $) --
            ($ (P3)+(\txC,\tyC) $) --
            ($ (P4)+(\txC,\tyC) $) --
            ($ (P5)+(\txC,\tyC) $) --
            ($ (P6)+(\txC,\tyC) $) --
            ($ (P7)+(\txC,\tyC) $) --
            ($ (P8)+(\txC,\tyC) $) -- cycle;

        % =====================================================
        % Right translated polygon  K_c
        % =====================================================
        \draw[myblue,line width=1pt]
            ($ (P1)+(\txR,\tyR) $) --
            ($ (P2)+(\txR,\tyR) $) --
            ($ (P3)+(\txR,\tyR) $) --
            ($ (P4)+(\txR,\tyR) $) --
            ($ (P5)+(\txR,\tyR) $) --
            ($ (P6)+(\txR,\tyR) $) --
            ($ (P7)+(\txR,\tyR) $) --
            ($ (P8)+(\txR,\tyR) $) -- cycle;

        % =====================================================
        % Labels
        % =====================================================
        \node[text=myred]  at (-1.2,4.25) {$K_{-d}$};
        \node[text=black]  at ( 1.9,3.25) {$K$};
        \node[text=myblue] at ( 5,2.25) {$K_c$};

    \end{scope}
\end{tikzpicture}
\end{center}
    The family $\{ K_t \oplus_p -K_t\}_{t \in [-d,c]}$ is also a shadow system (family of translations of $K$), and therefore, since $|K_t|=|K|$ for any $t$, we have that the function
    $$
        R:t \mapsto \frac{|K_t \oplus_p -K_t|}{|K_t|}
    $$ 
    is convex for $t \in [-d,c]$. Thus, the maximum is attained at either $t = c$ or $t=-d$. Hence, we can assume that $K$ is a convex body with the origin on its boundary. 

    If the origin belongs to the interior of an edge of $K$, then there exists a shadow system (family of translations of $K$ along the edge) whose endpoints are polygons with one vertex at the origin. By the same reasoning as above, we may assume that $K$ is a polygon with one vertex at the origin.
    
    For $N\geq 3$, let $v_1=0,\ldots,v_{2N}$ be the vertices of $K$, listed in counterclockwise order. Thus, $v_2$ and $v_{2N}$ are the vertices adjacent to $v_1$.
    % Denote by $A_i$ a triangle with vertices are $v_{i-1},v_i$ and $v_{i+1}$ where $v_0 := v_{2N}$. 
    There exists a shadow system $\{K_t\}_{t \in [t_0, t_1]}$ obtained by moving the vertices $v_{2}$ and $ v_{N+2}$ in such a way that $K_t$ remains symmetric with respect to the center of symmetry of $K$, and moreover that the polygons $K_0 = K, K_{t_0}$ and $K_{t_1}$ have at most $2N-2$ vertices. Note that along the movement, the volume of $K_t$ is fixed and the origin belongs to $K_t$ for all $t \in [t_0,t_1]$. We can then use the same argument as above to show that the maximizer along this movement is reached at one of the endpoints, so it is a polygon with two fewer vertices. By repeated application of this method, we reduce the polygon to a parallelogram. 
    \begin{center}
    \begin{tikzpicture}[scale=0.65]
    \definecolor{myblue}{RGB}{20,120,230}
    \definecolor{myred}{RGB}{220,0,0}
    \definecolor{mygreen}{RGB}{0,110,75}

    % =========================================================
    % Parameters: vertices of K
    % DO NOT CHANGE BLACK K
    % =========================================================
    \coordinate (A1) at (-3.2,-0.2);
    \coordinate (A2) at (-2.8,-1.4);
    \coordinate (A3) at (-1.2,-2.0);
    \coordinate (A4) at ( 2.0,-1.6);
    \coordinate (A5) at ( 3.2, 0.2);
    \coordinate (A6) at ( 2.8, 1.4);
    \coordinate (A7) at ( 1.2, 2.0);
    \coordinate (A8) at (-2.0, 1.6);
    \node at (-3.1,-1.7) {$v_{2N}$};
    \node at (-2.2,2) {$v_{N+2}$};
    \node at (2.35,-1.8) {$v_{2}$};
    \node at (-1.5,-2.4) {$v_{1}$};
    % \node at (-3.7,-.3) {$v_{2N}$};
    % =========================================================
    % Blue direction:
    % A1A7 and A3A5 are parallel.
    % Direction vector = (4.4,2.2), slope = 1/2.
    % Solid blue lines are parallel translates through A8 and A4.
    % =========================================================

    % solid blue line through A8
    \coordinate (Uleft)  at (-4.10,0.55);
    \coordinate (Uright) at ( 0.95,3.075);

    % solid blue line through A4
    \coordinate (Lleft)  at (-0.85,-3.025);
    \coordinate (Lright) at ( 4.25,-0.475);

    % red outside vertices
    \coordinate (P) at (-3.52,0.83);
    \coordinate (Q) at ( 3.54,-0.8);

    % lower green outside vertex
    \coordinate (B) at (0.10,-2.55);
    \coordinate (R) at (-.25,2.45);
    \begin{scope}[shift={(1.6,-.6)}]

        % =====================================================
        % Axes -- unchanged
        % =====================================================
        \draw[line width=1.1pt] (-6,-1.4) -- (3,-1.4);
        \draw[line width=1.1pt] (-2.8,-3.0) -- (-2.8,4.0);

        % =====================================================
        % Black polygon K -- unchanged
        % =====================================================
        \fill[gray!15,opacity=.45]
            (A1) -- (A2) -- (A3) -- (A4) --
            (A5) -- (A6) -- (A7) -- (A8) -- cycle;

        \draw[line width=1.2pt]
            (A1) -- (A2) -- (A3) -- (A4) --
            (A5) -- (A6) -- (A7) -- (A8) -- cycle;

        % =====================================================
        % Solid blue lines
        % Upper passes through A8.
        % Lower passes through A4.
        % Both are parallel to A1A7 and A3A5.
        % =====================================================
        \draw[myblue,line width=1.5pt]
            (Uleft) -- (Uright);

        \draw[myblue,line width=1.5pt]
            (Lleft) -- (Lright);

        % =====================================================
        % Dashed blue lines
        % Correct pairs: A1A7 and A3A5
        % =====================================================
        \draw[myblue,line width=1.5pt,dash pattern=on 8pt off 7pt]
            (A1) -- (A7);

        \draw[myblue,line width=1.5pt,dash pattern=on 8pt off 7pt]
            (A3) -- (A5);

        % =====================================================
        % Red broken lines
        % =====================================================
        \draw[myred,line width=1.5pt]
            (A1) -- (P) -- (A7);

        \draw[myred,line width=1.5pt]
            (A3) -- (Q) -- (A5);

        % =====================================================
        % Green broken lines
        % =====================================================
        \draw[mygreen,line width=1.5pt]
            (A1) -- (R) -- (A7);

        \draw[mygreen,line width=1.5pt]
            (A3) -- (B) -- (A5);
        \node[mygreen] at ( -2.2, 3.4) {$K_{t_1}$};
        \node[myred] at ( -5.4, 1.8) {$K_{t_0}$};
        % =====================================================
        % Label
        % =====================================================
        \node[text=black] at (-1.6,.6) {$K$};
    \end{scope}
\end{tikzpicture}
\end{center}
% Next, by applying a linear transformation, we may assume that $K$ is a rectangle in the first quadrant containing the origin and so $|K| =1$. 
% Using the formula \cite[Lemma 15]{MNZ-25}, we have
% \[
%     | K \oplus_p -K |= \frac{2\Gamma(1+\frac{1}{q})^2}{\Gamma(1+\frac{2}{q})} +2. \qedhere 
% \]
\end{proof}

% Inequality \eqref{eq:RG-sym-zonoid} is obvious when $p = 1$ since under Minkowski sum, the volume is translation invariant, and then inequality is indeed always equality. Also, for another endpoint, when $p = \infty$, this inequality was proved by Rogers and Shephard \cite[Theorem 3]{RS-58-2} where the equality holds if and only if $K$ is a pseudo double pyramid.

\begin{remark}
   Note that this inequality in $\R^2$ can also be established using the Orlicz sum instead of the $L_p$-sum as in the paper by Jin and Yuan \cite{JY-14}. The proof proceeds via the same arguments as in the second proof.
\end{remark}
\begin{conj}
    We conjecture that among planar convex bodies with a center of symmetry and containing the origin, for $p>1$, there is equality in Corollary~\ref{cor:center-sym} only for parallelograms with a vertex at the origin.
\end{conj}

\newpage
% \renewcommand*{\bibfont}{\footnotesize}
% \printbibliography
\bibliographystyle{siam}
% \bibliography{references}

\bigskip
\noindent Matthieu Fradelizi
\\
LAMA, Univ Gustave Eiffel, Univ Paris Est Creteil, 77447 Marne-la-Vall\'ee, France.
\\
E-mail address: matthieu.fradelizi@univ-eiffel.fr
\vspace{2mm}
\\
\noindent Auttawich Manui 
\\
Department of Mathematical Sciences, Kent State University, Kent, OH 44242, USA.
\\
E-mail address: amanui@kent.edu
\vspace{2mm}
\\
\noindent Mark Meyer
\\
LAMA, Univ Gustave Eiffel, Univ Paris Est Creteil, 77447 Marne-la-Vall\'ee, France.
\\
E-mail address: mark.meyer@univ-eiffel.fr
\vspace{2mm}
\\
\noindent Cheikh Saliou Ndiaye
\\
LAMA, Univ Gustave Eiffel, Univ Paris Est Creteil, 77447 Marne-la-Vall\'ee, France.
\\
E-mail address: cheikh-saliou.ndiaye@univ-eiffel.fr
\vspace{2mm}
\\
\end{document}